\newtheorem{theorem}{Theorem}[section]
\newtheorem{lemma}[theorem]{Lemma}
\newtheorem{proposition}[theorem]{Proposition}
\newtheorem{corollary}[theorem]{Corollary}
\newtheorem{remark}[theorem]{Remark}
\newtheorem{example}[theorem]{Example}
\begin{document}
\title[Comparison of some purities]{Comparison of some purities, flatnesses and injectivities}
\author{Walid Al-Kawarit}
\address{Universit\'e de Caen Basse-Normandie, CNRS UMR
  6139 LMNO,
F-14032 Caen, France}
\email{walid.al-kawarit@unicaen.fr} 

\author{Fran\c cois Couchot}
\address{Universit\'e de Caen Basse-Normandie, CNRS UMR
  6139 LMNO,
F-14032 Caen, France}
\email{francois.couchot@unicaen.fr}

\keywords{$(n,m)$-pure submodule, $(n,m)$-flat module, $(n,m)$-injective module, $(n,m)$-coherent ring}

\subjclass[2000]{Primary 16D40, 16D50, 16D80}

\begin{abstract} In this paper, we  compare $(n,m)$-purities for different pairs of positive integers $(n,m)$. When $R$ is a commutative ring, these purities are not equivalent if $R$ doesn't satisfy the following property: there exists a positive integer $p$ such that, for each maximal ideal $P$, every finitely generated ideal of $R_P$ is $p$-generated.  When this property holds, then the $(n,m)$-purity and the $(n,m')$-purity are equivalent if $m$ and $m'$ are integers $\geq np$. These results are obtained by a generalization of Warfield's methods. There are also some interesting results when $R$ is a semiperfect strongly $\pi$-regular ring. We also compare $(n,m)$-flatnesses and $(n,m)$-injectivities for different pairs of positive integers $(n,m)$. In particular, if $R$ is right perfect and right self $(\aleph_0,1)$-injective, then each $(1,1)$-flat right $R$-module is projective. In several cases, for each positive integer $p$, all $(n,p)$-flatnesses are equivalent. But there are some examples where the $(1,p)$-flatness is not equivalent to the $(1,p+1)$-flatness.
\end{abstract}
\maketitle

All rings in this paper are associative with unity, and all modules are
unital. Let $n$ and $m$ be two positive integers. A right $R$-module $M$ is said to be $(n,m)$-presented if it is the factor module of a free right module of rank $n$ modulo a $m$-generated submodule. A short exact sequence $(\Sigma)$ of left $R$-modules is called $(n,m)$-pure if it remains exact when tensoring it with any $(n,m)$-presented right module.  We say that $(\Sigma)$ is $(\aleph_0,m)$-{\it pure exact} (respectively $(n,\aleph_0)$-{\it pure exact} if, for each positive integer $n$ (respectively $m$) $(\Sigma)$ is $(n,m)$-pure exact. Let us observe that the $(1,1)$-pure exact sequences are exactly the RD-exact sequences (see \cite{War69}) and the $(\aleph_0,\aleph_0)$-exact sequences are the pure-exact sequences in the Cohn's sense. Similar results as in the classical theories of purity hold, with similar proofs. In particular, a left $R$-module is $(n,m)$-pure-projective if and only if it is a summand of a direct sum of $(m,n)$-presented left modules, and each left $R$-module has a $(n,m)$-pure-injective hull which is unique up to an isomorphism. 

In this paper, we  compare $(n,m)$-purities for different pairs of positive integers $(n,m)$. When $R$ is commutative, we shall see that some of these purities are  equivalent only if $R$  satisfies the following property: there exists a positive integer $p$ such that, for each maximal ideal $P$, every finitely generated ideal of $R_P$ is $p$-generated. When this property holds, then  $(n,m)$-purity and  $(n,m')$-purity are equivalent if $m$ and $m'$ are integers $\geq np$. These results are obtained by using the following: if $R$ is a local commutative ring for which there exists a $(p+1)$-generated ideal, where $p$ is a positive integer, then, for each positive integer $n$,  for each integer $m$, $n(p-1)+1\leq m\leq np+1$, there exists a $(n,m)$-presented $R$-module whose endomorphism ring is local. It is a generalization of the Warfield's construction of indecomposable finitely presented modules when $R$ is not a valuation ring. 

When $R$ is semiperfect and strongly $\pi$-regular, we show that there exists an integer $m>0$ such that, for any integer $n>0$, each $(n,m)$-pure exact sequence of right modules is $(n,\aleph_0)$-pure exact if and only if there exists an integer $p>0$ such that every finitely generated left ideal is $p$-generated.

As in \cite{ZhChZh05} we define $(n,m)$-flat modules and $(n,m)$-injective modules. We also compare $(n,m)$-flatnesses and $(n,m)$-injectivities for different pairs of positive integers $(n,m)$. In particular, if $R$ is a right perfect ring which is right self $(\aleph_0,1)$-injective, then each $(1,1)$-flat right $R$-module is projective. For many classes of rings, for each positive integer $p$, we show that the $(1,p)$-flatness implies the $(\aleph_0,p)$-flatness, but we have no general result. If $R$ is a local commutative ring with a non finitely generated maximal ideal $P$ satisfying $P^2=0$, then for each positive integer $p$, there exists an $R$-module which is $(\aleph_0,p)$-flat (resp, $(\aleph_0,p)$-injective)  and which is not $(1,p+1)$-flat (resp, $(1,p+1)$-injective). 

As in \cite{ZhChZh05} we define left $(n,m)$-coherent rings. When $R$ is a commutative locally perfect ring which is $(1,1)$-coherent and self $(1,1)$-injective, we show that $R$ is an IF-ring, each $(1,1)$-flat $R$-module is flat and each $(1,1)$-injective $R$-module is FP-injective. For other classes of rings, for each positive integer $p$, we show that the left $(1,p)$-coherence implies the left $(\aleph_0,p)$-coherence, but we have no general result. If $R=V[[X]]$, the power series ring over  a valuation domain $V$ whose order group is not isomorphic to $\mathbb{R}$, then $R$ is a $(\aleph_0,1)$-coherent ring which is not $(1,2)$-coherent.

\section{$(n,m)$-pure exact sequences}
\label{S:pure} 

By using a standard technique, (see for instance \cite[Chapter I, Section 8]{FuSa01}), we can prove the following theorem, and similar results hold if we replace $n$ or $m$ with $\aleph_0$.
\begin{theorem}
\label{T:pure} Assume that $R$ is an algebra over a commutative ring $S$ and $E$ is an injective $S$-cogenerator. Then, for each exact sequence $(\Sigma)$ of left $R$-modules $0\rightarrow A\rightarrow B\rightarrow C\rightarrow 0$, the following conditions are equivalent:
\begin{enumerate}
\item $(\Sigma)$ is $(n,m)$-pure;
\item  for each $(m,n)$-presented left module $G$ the sequence $\mathrm{Hom}_R(G,(\Sigma))$ is exact;
\item every system of $n$ equations over $A$
\[\sum_{j=1}^{m}r_{i,j}x_j=a_i\in A\qquad (i=1,\dots,n)\]
with coefficients $r_{i,j}\in R$ and unknowns $x_1,\dots,x_m$ has a solution in $A$ whenever it is solvable in $B$;
\item the exact sequence of right $R$-modules $\mathrm{Hom}_S((\Sigma),E)$ is $(m,n)$-pure.
\end{enumerate}
\end{theorem}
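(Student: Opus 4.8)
The plan is to prove the four conditions equivalent by establishing the bi-implications (1)$\Leftrightarrow$(3), (2)$\Leftrightarrow$(3), and (1)$\Leftrightarrow$(4); the $\aleph_0$-versions then follow by passing to the obvious direct limits. For (1)$\Leftrightarrow$(3) I would first record the standard fact that the $(n,m)$-presented right $R$-modules are, up to isomorphism, exactly the cokernels $M=R^n/\rho(R^m)$ of $R$-linear maps $\rho\colon R^m\to R^n$, such a $\rho$ being an $n\times m$ matrix $(r_{i,j})$ over $R$. Tensoring $R^m\xrightarrow{\rho}R^n\to M\to 0$ with a left module $X$ identifies $M\otimes_R X$ with $X^n$ modulo the submodule of all $(\sum_j r_{i,j}x_j)_i$ with $x\in X^m$; since $M\otimes_R-$ is right exact, $M\otimes(\Sigma)$ is exact iff $M\otimes A\to M\otimes B$ is injective, and unwinding the above description this injectivity says precisely that a system $\sum_j r_{i,j}x_j=a_i$ ($i=1,\dots,n$, $a_i\in A$) solvable with $x_j\in B$ is already solvable with $x_j\in A$. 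Letting $(r_{i,j})$ range over all $n\times m$ matrices, so that $M$ ranges over all $(n,m)$-presented right modules, makes (1) and (3) literally the same statement.

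For (2)$\Leftrightarrow$(3) I would set up the usual dictionary between systems and maps out of a finitely presented module. Given the data of (3), let $G=R^m/L$ where $L$ is generated by the $n$ elements $\ell_i=\sum_j r_{i,j}e_j$ ($e_j$ the standard basis of $R^m$); this $G$ is $(m,n)$-presented, and conversely every $(m,n)$-presented left module arises so. An $R$-homomorphism $G\to C$ is the same as a tuple $(c_j)\in C^m$ with $\sum_j r_{i,j}c_j=0$ for all $i$, and it lifts along $B\to C$ iff one can choose preimages $b_j\in B$ of the $c_j$ with $\sum_j r_{i,j}b_j=0$. Starting from arbitrary preimages $b_j^0$, the elements $a_i:=\sum_j r_{i,j}b_j^0$ lie in $A$, the system $\sum_j r_{i,j}x_j=a_i$ is solvable in $B$ (by the $b_j^0$), and correcting the $b_j^0$ by a solution in $A$ produces the desired lift; running this correspondence both ways gives (2)$\Leftrightarrow$(3).

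For (1)$\Leftrightarrow$(4) I would use duality into $E$. Since $E$ is $S$-injective, $\mathrm{Hom}_S(-,E)$ is exact, so $\mathrm{Hom}_S((\Sigma),E)$ is a short exact sequence of right $R$-modules; since $E$ is an $S$-cogenerator, $\mathrm{Hom}_S(-,E)$ is moreover faithful, hence a map of $S$-modules is injective iff its dual is surjective. For a right module $M$ and a left module $X$ there are natural isomorphisms $\mathrm{Hom}_S(M\otimes_R X,E)\cong\mathrm{Hom}_R(X,\mathrm{Hom}_S(M,E))\cong\mathrm{Hom}_R(M,\mathrm{Hom}_S(X,E))$. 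Taking $M$ an arbitrary $(n,m)$-presented right module and letting $X$ run through $(\Sigma)$: $M\otimes A\to M\otimes B$ is injective iff $\mathrm{Hom}_R(M,\mathrm{Hom}_S(B,E))\to\mathrm{Hom}_R(M,\mathrm{Hom}_S(A,E))$ is surjective, i.e. (by left-exactness of $\mathrm{Hom}_R(M,-)$) iff $\mathrm{Hom}_R(M,\mathrm{Hom}_S((\Sigma),E))$ is exact. By the right-module form of the already established equivalence (1)$\Leftrightarrow$(2) — with left/right and $n$/$m$ interchanged — the validity of the latter for all $(n,m)$-presented right $M$ is exactly the statement that the right-module sequence $\mathrm{Hom}_S((\Sigma),E)$ is $(m,n)$-pure. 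Hence (1)$\Leftrightarrow$(4).

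The only place genuine care is needed is (1)$\Leftrightarrow$(4): one must keep straight which $\mathrm{Hom}$ is taken over $S$ and which over $R$, which module structures become left and which right after dualizing, and the systematic exchange of $(n,m)$ with $(m,n)$ when the $\mathrm{Hom}$-test characterization is transported to the dual sequence; the rest is the routine "solvability of systems" bookkeeping. Finally, for the variants with $n$ or $m$ replaced by $\aleph_0$, one either repeats the arguments with systems of countably many equations/unknowns, or notes that an $(\aleph_0,m)$- (resp. $(n,\aleph_0)$-) presented module is a suitable direct limit of $(n,m)$-presented ones; since tensor products, the relevant $\mathrm{Hom}$'s, and exactness are compatible with these limits, all four conditions transfer.
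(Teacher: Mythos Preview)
Your argument is correct and is precisely the standard technique the paper alludes to (it gives no proof, only a pointer to \cite[Chapter I, Section 8]{FuSa01}); the identifications of $(n,m)$-presented modules with cokernels of matrices, the solvability dictionary for (2)$\Leftrightarrow$(3), and the tensor--Hom adjunction plus faithfulness of $\mathrm{Hom}_S(-,E)$ for (1)$\Leftrightarrow$(4) are exactly what is intended. One small remark on the closing sentence: in this paper $(\aleph_0,m)$-pure simply means $(n,m)$-pure for every finite $n$, so the $\aleph_0$-variants follow immediately from the finite case by quantifying over $n$ (or $m$); there is no need to invoke countably infinite systems or direct limits of presented modules.
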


 Propositions~\ref{P:PureProj} and \ref{P:PureInj} can be deduced from \cite[Theorem 1]{Warf69}.

A left $R$-module $G$ is called $(n,m)$-{\it pure-projective} if for each $(n,m)$-pure  exact sequence $0\rightarrow A\rightarrow B\rightarrow C\rightarrow 0$ the sequence
\[0\rightarrow \mathrm{Hom}_R(G,A)\rightarrow \mathrm{Hom}_R(G,B)\rightarrow \mathrm{Hom}_R(G,C)\rightarrow 0\]
is exact. Similar definitions can be given by replacing $n$ or $m$ by $\aleph_0$. From Theorem~\ref{T:pure} and by using standard technique (see for instance \cite[Chapter VI, Section 12]{FuSa01}) we get the following proposition in which $n$ or $m$ can be replaced by $\aleph_0$:
\begin{proposition}
\label{P:PureProj} Let $G$ be a left $R$-module. Then the following assertions hold:
\begin{enumerate}
\item there exists a $(n,m)$-pure exact sequence of left modules \[0\rightarrow K\rightarrow F\rightarrow G\rightarrow 0\] where $F$ is a direct sum of $(m,n)$-presented left modules;
\item $G$ is $(n,m)$-pure projective if and only if it is a summand of a direct sum of $(m,n)$-presented left modules.
\end{enumerate}
\end{proposition}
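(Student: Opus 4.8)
The plan is to mimic the classical construction of a pure-projective presentation (as in \cite[Chapter VI, Section 12]{FuSa01}), using Theorem~\ref{T:pure} to turn $(n,m)$-purity into a lifting property for homomorphisms out of $(m,n)$-presented modules.

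For (1) I would fix a set $\{H_i\}_{i\in I}$ of representatives of the isomorphism classes of $(m,n)$-presented left $R$-modules; this is a legitimate set, since every such module is a quotient of $R^m$ by the submodule generated by some $n$-tuple of elements of $R^m$, so its cardinality is bounded. Then set
\[ F=\bigoplus_{i\in I}\ \bigoplus_{\phi\in\mathrm{Hom}_R(H_i,G)} H_i, \]
and let $\psi\colon F\to G$ be the morphism whose restriction to the summand $H_i$ indexed by $\phi$ equals $\phi$. Since $n\geq 1$, the zero submodule of $R^m$ is $n$-generated, so $R^m$ is itself $(m,n)$-presented; as every homomorphism $R^m\to G$ occurs among those indexing the summands of $F$, the map $\psi$ is onto. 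Put $K=\ker\psi$. By construction, for each $i$ and each $\phi\in\mathrm{Hom}_R(H_i,G)$ the map $\phi$ factors through $\psi$ along the inclusion of the corresponding summand, hence $\mathrm{Hom}_R(H_i,\psi)$ is surjective for every $(m,n)$-presented $H_i$. By the equivalence (1)$\Leftrightarrow$(2) of Theorem~\ref{T:pure} this means exactly that $0\to K\to F\to G\to 0$ is $(n,m)$-pure, giving (1).

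For (2), the implication ``$\Rightarrow$'' follows by applying the $(n,m)$-pure-projectivity of $G$ to the pure exact sequence just built: $\mathrm{Hom}_R(G,\psi)$ is then surjective, so $\mathrm{id}_G$ lifts along $\psi$, the sequence splits, and $G$ is a direct summand of the direct sum $F$ of $(m,n)$-presented modules. For ``$\Leftarrow$'', note first that by Theorem~\ref{T:pure} every $(m,n)$-presented module is $(n,m)$-pure-projective; then invoke $\mathrm{Hom}_R(\bigoplus_j M_j,-)\cong\prod_j\mathrm{Hom}_R(M_j,-)$ together with the facts that products of exact sequences and direct summands of exact sequences are exact to conclude that any direct summand of a direct sum of $(m,n)$-presented modules is $(n,m)$-pure-projective. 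The same argument goes through verbatim with $n$ or $m$ (or both) replaced by $\aleph_0$, the only change being which class of test modules Theorem~\ref{T:pure} supplies. I do not expect a genuine obstacle; the only points needing a little care are the set-theoretic legitimacy of the index set and the remark that $R^m$ is among the $(m,n)$-presented modules, which is what makes $\psi$ surjective — after that, $(n,m)$-purity is immediate from the construction via Theorem~\ref{T:pure}.
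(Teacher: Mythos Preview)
Your argument is correct and is exactly the ``standard technique'' the paper invokes (citing \cite[Chapter VI, Section 12]{FuSa01} and \cite[Theorem 1]{Warf69}) without spelling out: build $F$ as the direct sum of all $(m,n)$-presented modules indexed by their maps into $G$, use Theorem~\ref{T:pure}(1)$\Leftrightarrow$(2) to see the resulting sequence is $(n,m)$-pure, and then read off (2) from the splitting and the closure of $(n,m)$-pure-projectives under direct sums and summands. The only cosmetic point is that your surjectivity remark should refer to the representative $H_{i_0}\cong R^m$ rather than $R^m$ itself, but this changes nothing.
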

A left $R$-module $G$ is called $(n,m)$-{\it pure-injective} if for each $(n,m)$-pure  exact sequence $0\rightarrow A\rightarrow B\rightarrow C\rightarrow 0$ the sequence
\[0\rightarrow \mathrm{Hom}_R(C,G)\rightarrow \mathrm{Hom}_R(B,G)\rightarrow \mathrm{Hom}_R(A,G)\rightarrow 0\]
is exact. 

If $M$ is a left module we put $M^{\sharp}=\mathrm{Hom}_{\mathbb{Z}}(M,\mathbb{Q}/\mathbb{Z})$. Thus $M^{\sharp}$ is a right module. It is the {\it character module} of $M$.

If $A$ is a submodule of a left $R$-module $B$, we say that $B$ is a $(n,m)$-{\it pure essential extension} of $A$ if $A$ is a $(n,m)$-pure submodule of $B$ and for each nonzero submodule $K$ of $B$ such that $A\cap K=0$, $(A+K)/K$ is not a $(n,m)$-pure submodule of $B/K$. If, in addition, $B$ is $(n,m)$-pure injective, we say that $B$ is a $(n,m)$-{\it pure injective hull} of $A$. In these above definitions and in the following proposition $n$ or $m$ can be  replaced by $\aleph_0$.

\begin{proposition}
\label{P:PureInj} The following assertions hold:
\begin{enumerate}
\item each left $R$-module is a $(n,m)$-pure submodule of a $(n,m)$-pure injective left module;
\item each left $R$-module has a $(n,m)$-pure injective hull which is unique up to an isomorphism.
\end{enumerate}
\end{proposition}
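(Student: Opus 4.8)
The plan is to transcribe the classical construction of pure-injective hulls (as in \cite{Warf69} and \cite[Chapter VI, Section 12]{FuSa01}) to the relative $(n,m)$-setting: assertion (1) is precisely the statement that there are enough $(n,m)$-pure injective modules, and assertion (2) will then follow from (1) by the usual maximal-pure-essential-extension argument. Throughout one uses Theorem~\ref{T:pure} with $S=\mathbb{Z}$ and $E=\mathbb{Q}/\mathbb{Z}$, so that the character functor $(-)^{\sharp}$ provides the equivalence: a short exact sequence $(\Sigma)$, of left or of right modules, is $(n,m)$-pure if and only if $(\Sigma)^{\sharp}$ is $(m,n)$-pure. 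The first, formal, remark is that for an $(n,m)$-presented right module $G$ the character module $G^{\sharp}$ is an $(n,m)$-pure injective left module: indeed $\mathrm{Hom}_R(-,G^{\sharp})\cong\mathrm{Hom}_{\mathbb{Z}}(G\otimes_R-,\mathbb{Q}/\mathbb{Z})$, the functor $G\otimes_R-$ sends every $(n,m)$-pure exact sequence of left modules to an exact sequence by the very definition of $(n,m)$-purity, and $\mathrm{Hom}_{\mathbb{Z}}(-,\mathbb{Q}/\mathbb{Z})$ is exact. Since a direct product of $(n,m)$-pure injective modules is again $(n,m)$-pure injective, so is $\bigl(\bigoplus_{\lambda}G_{\lambda}\bigr)^{\sharp}\cong\prod_{\lambda}G_{\lambda}^{\sharp}$ for any family $(G_{\lambda})$ of $(n,m)$-presented right modules.

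It remains, for (1), to embed an arbitrary left module $M$ in such a module as an $(n,m)$-pure submodule, and this is done in two steps. First, the canonical map $M\to M^{\sharp\sharp}$ is an $(n,m)$-pure monomorphism: writing $0\to M\to M^{\sharp\sharp}\to Q\to 0$ for its cokernel sequence and applying $(-)^{\sharp}$ one obtains the exact sequence $0\to Q^{\sharp}\to M^{\sharp\sharp\sharp}\to M^{\sharp}\to 0$, which splits because the map $M^{\sharp\sharp\sharp}\to M^{\sharp}$ admits the canonical map $M^{\sharp}\to M^{\sharp\sharp\sharp}$ as a section (a triangle identity for the self-duality $(-)^{\sharp}$); being split it is $(m,n)$-pure, so by Theorem~\ref{T:pure} the sequence $0\to M\to M^{\sharp\sharp}\to Q\to 0$ is $(n,m)$-pure. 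Second, apply Proposition~\ref{P:PureProj}(1) — in its version for right modules and with the pair $(m,n)$ in place of $(n,m)$, valid by left–right symmetry — to the right module $M^{\sharp}$: one obtains an $(m,n)$-pure exact sequence $0\to K\to F\to M^{\sharp}\to 0$ of right modules with $F=\bigoplus_{\lambda}G_{\lambda}$ a direct sum of $(n,m)$-presented right modules. Applying $(-)^{\sharp}$ and Theorem~\ref{T:pure} again shows that $0\to M^{\sharp\sharp}\to F^{\sharp}\to K^{\sharp}\to 0$ is $(n,m)$-pure, while $F^{\sharp}\cong\prod_{\lambda}G_{\lambda}^{\sharp}$ is $(n,m)$-pure injective by the previous paragraph. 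Since a composite of $(n,m)$-pure monomorphisms is again $(n,m)$-pure, the composite $M\hookrightarrow M^{\sharp\sharp}\hookrightarrow F^{\sharp}$ realises $M$ as an $(n,m)$-pure submodule of an $(n,m)$-pure injective module, which is (1).

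For (2) one runs the standard construction of relative pure-injective hulls. Given an $(n,m)$-pure embedding $M\hookrightarrow E$ with $E$ $(n,m)$-pure injective (from (1)), Zorn's lemma — applicable because "$(n,m)$-pure submodule" and the negations occurring in the definition of "$(n,m)$-pure essential extension" are conditions of finite character — yields a submodule $H$ of $E$ maximal subject to $M\subseteq H$ and $M$ being $(n,m)$-pure essential in $H$; one then checks, exactly as in the classical theory of purity, that $H$ is $(n,m)$-pure in $E$, hence a direct summand of $E$, hence $(n,m)$-pure injective, so that $H$ is an $(n,m)$-pure injective hull of $M$. Uniqueness is the usual argument: for two hulls $H_{1},H_{2}$, lift the inclusion $M\hookrightarrow H_{2}$ along the $(n,m)$-pure inclusion $M\hookrightarrow H_{1}$ to $f\colon H_{1}\to H_{2}$; $(n,m)$-pure essentiality of $M$ in $H_{1}$ forces $f$ injective, and $(n,m)$-pure essentiality of $M$ in $H_{2}$, together with $f(H_{1})$ being $(n,m)$-pure injective and $(n,m)$-pure in $H_{2}$ (hence a direct summand whose complement meets $M$ trivially), forces $f$ surjective. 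The principal difficulty is (1): in the Cohn case $(\aleph_{0},\aleph_{0})$ every character module is pure injective, so $M^{\sharp\sharp}$ itself is a pure injective module containing $M$ purely and nothing more is needed, whereas for a fixed finite pair $(n,m)$ the module $M^{\sharp\sharp}$ need not be $(n,m)$-pure injective — $X^{\sharp}$ is $(n,m)$-pure injective only when $X$ is flat with respect to $(n,m)$-purity — which is exactly why one must insert the $(n,m)$-pure projective presentation of $M^{\sharp}$ furnished by Proposition~\ref{P:PureProj}(1) in order to reach a product of character modules of genuinely $(n,m)$-presented modules. Once (1) is secured, the passage to (2) and the uniqueness statement are routine, the only points needing care — the finite-character property justifying Zorn's lemma and the lemma that a maximal $(n,m)$-pure essential extension is $(n,m)$-pure in the ambient module — being handled precisely as in the classical case.
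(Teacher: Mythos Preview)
Your proof is correct and follows essentially the same route as the paper: use Proposition~\ref{P:PureProj} to present $M^{\sharp}$ by a direct sum $F$ of $(n,m)$-presented right modules, dualize via Theorem~\ref{T:pure} to embed $M\hookrightarrow M^{\sharp\sharp}\hookrightarrow F^{\sharp}$ $(n,m)$-purely, and observe that $F^{\sharp}$ is $(n,m)$-pure injective by the tensor--hom adjunction; for (2) one then invokes the Warfield/Stenstr\"om argument. One small remark: your appeal to ``finite character'' to justify Zorn's lemma in (2) is a little loose---the cleaner reason (and the one the paper singles out) is that direct limits of $(n,m)$-pure exact sequences remain $(n,m)$-pure exact, which is what makes unions of chains of $(n,m)$-pure essential extensions work.
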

\begin{proof}
(1). Let $M$ be a left $R$-module. By Proposition~\ref{P:PureProj} there exists a $(m,n)$-pure exact sequence of right $R$-modules $0\rightarrow K\rightarrow F\rightarrow M^{\sharp}\rightarrow 0$ where $F$ is a direct sum of $(n,m)$-presented right modules. From Theorem~\ref{T:pure} it  follows that $(M^{\sharp})^{\sharp}$ is a $(n,m)$-pure submodule of $F^{\sharp}$. By \cite[Corollary 1.30]{Fac98} $M$ is isomorphic to a pure submodule of $(M^{\sharp})^{\sharp}$. So, $M$ is isomorphic to a $(n,m)$-pure submodule of $F^{\sharp}$. By using the canonical isomorphism $(F\otimes_R-)^{\sharp}\cong\mathrm{Hom}_R(-,F^{\sharp})$ we get that $F^{\sharp}$ is $(n,m)$-pure injective since $F$ is a direct sum of $(n,m)$-presented modules.

(2). Since (1) holds and every direct limit of $(n,m)$-pure exact sequences is $(n,m)$-pure exact too, we can adapt the method of Warfield's proof of existence of pure-injective hull to show (2)(see \cite[Proposition 6]{War69}). We can also use \cite[Proposition 4.5]{Ste67}.
\end{proof}

\begin{proposition}
\label{P:PureLocal} Let $R$ be a commutative ring and let $(\Sigma)$ be a short exact sequence of $R$-modules. Then $(\Sigma)$ is $(n,m)$-pure if and only if, for each maximal ideal $P$ $(\Sigma)_P$ is $(n,m)$-pure.
\end{proposition}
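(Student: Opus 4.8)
The plan is to use the tensor/solvability characterization from Theorem~\ref{T:pure}(3): $(\Sigma)$ is $(n,m)$-pure if and only if every system of $n$ equations $\sum_{j=1}^m r_{i,j}x_j=a_i$ with $a_i\in A$ that is solvable in $B$ is already solvable in $A$. The forward implication is routine: localization is exact, so if $(\Sigma)$ is $(n,m)$-pure then tensoring with any $(n,m)$-presented $R_P$-module $G$ keeps it exact; since such a $G$, viewed as an $R$-module via $R\to R_P$, is $(n,m)$-presented over $R$, and $\mathord{-}\otimes_{R_P}(\Sigma)_P\cong\mathord{-}\otimes_R(\Sigma)$ for $R_P$-modules, exactness of $G\otimes_R(\Sigma)$ gives exactness of $G\otimes_{R_P}(\Sigma)_P$. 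Hence each $(\Sigma)_P$ is $(n,m)$-pure.

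For the converse I would argue with the solvability criterion. Suppose every $(\Sigma)_P$ is $(n,m)$-pure, and take a system $\sum_j r_{i,j}x_j=a_i$ ($i=1,\dots,n$) with $a_i\in A$, having a solution $(b_1,\dots,b_m)\in B^m$. The set $X$ of tuples $(x_1,\dots,x_m)\in A^m$ solving the system is (if nonempty) a coset of the $R$-submodule $N=\{x\in A^m:\sum_j r_{i,j}x_j=0\ \forall i\}$; concretely, fix any preimage issue aside, the obstruction to solvability in $A$ is measured by a single element of a suitable quotient module. The clean way to phrase this: let $\varphi:A^m\to A^n$ be the $R$-linear map given by the matrix $(r_{i,j})$, and let $a=(a_1,\dots,a_m)\in A^n$ wait—$a=(a_1,\dots,a_n)\in A^n$; solvability in $A$ means $a\in\operatorname{im}\varphi$, i.e. the image $\bar a$ of $a$ in $\operatorname{coker}\varphi$ vanishes. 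Solvability in $B$ means $\bar a$ maps to $0$ in $\operatorname{coker}(B^m\to B^n)$; but $\operatorname{coker}(B^m\to B^n)$ need not relate to $\operatorname{coker}\varphi$ so simply because $A\to B$ need not be a kernel-preserving situation. Instead I would use: $\bar a$ lies in the kernel of $\operatorname{coker}(A^m\to A^n)\to\operatorname{coker}(B^m\to B^n)$, and $(n,m)$-purity of $(\Sigma)$ is precisely the statement that this map is injective for every choice of $(r_{i,j})$ and, more intrinsically, that $\operatorname{Tor}_1^R(G,C)\to \ker$-type obstructions vanish—but the cleanest route is simply: $(n,m)$-purity $\iff$ for every $(n,m)$-presented $G$, $G\otimes A\to G\otimes B$ is injective. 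So fix an $(n,m)$-presented $R$-module $G$ and an element $\xi\in G\otimes_R A$ mapping to $0$ in $G\otimes_R B$; I must show $\xi=0$. By hypothesis $\xi_P=0$ in $(G\otimes_R A)_P\cong G_P\otimes_{R_P}A_P$ for every maximal ideal $P$, since $G_P$ is an $(n,m)$-presented $R_P$-module and $(\Sigma)_P$ is $(n,m)$-pure. An element of a module killed by every localization at a maximal ideal is zero; hence $\xi=0$ and $(\Sigma)$ is $(n,m)$-pure.

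The main thing to get right is the ``spreading out'' in the converse: one must know that for an $(n,m)$-presented $R$-module $G$, its localization $G_P$ is an $(n,m)$-presented $R_P$-module, and that $(G\otimes_R A)_P\cong G_P\otimes_{R_P}A_P$ naturally — both follow from right-exactness of $\mathord{-}\otimes R_P$ applied to a presentation $R^m\to R^n\to G\to 0$. After that, the only subtlety is the standard local–global principle that a module element vanishing in all localizations at maximal ideals is zero, which I would just invoke. I expect no serious obstacle; the proposition is essentially the observation that $(n,m)$-purity is detected by injectivity of finitely many tensored maps, and such injectivity is a local property because $(n,m)$-presented modules localize to $(n,m)$-presented modules. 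The same proof works verbatim with $n$ or $m$ replaced by $\aleph_0$.
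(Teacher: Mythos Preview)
Your converse direction is correct and in fact cleaner than the paper's. The paper argues via the diagonal embedding: it shows $\prod_P A_P$ is $(n,m)$-pure in $\prod_P B_P$ (using that a finitely presented $G$ commutes with products), then invokes a lemma that $A$ is pure in $\prod_P A_P$, and deduces $A$ is $(n,m)$-pure in $B$. Your elementwise argument --- take $\xi\in G\otimes_R A$ killed in $G\otimes_R B$, localize, use that $G_P$ is $(n,m)$-presented over $R_P$, conclude $\xi_P=0$ for all maximal $P$, hence $\xi=0$ --- is more elementary and avoids both the product construction and the external purity lemma.

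There is, however, a genuine slip in your forward direction. You claim that an $(n,m)$-presented $R_P$-module $G$, ``viewed as an $R$-module via $R\to R_P$, is $(n,m)$-presented over $R$''. This is false: $R_P$ itself is usually not finitely generated over $R$, so neither is $R_P^n$, and hence $G$ has no reason to be finitely presented over $R$ at all. The correct move (which the paper makes) is to lift the presentation: given $G=R_P^n/K$ with $K$ generated by $m$ elements of $R_P^n$, clear denominators in those generators to get an $(n,m)$-presented $R$-module $G'$ with $G'_P\cong G$; then $G\otimes_{R_P}(\Sigma)_P\cong (G'\otimes_R(\Sigma))_P$, and exactness of the latter follows from $(n,m)$-purity of $(\Sigma)$ plus exactness of localization. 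With this fix your forward direction goes through, and the overall proof is sound.
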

\begin{proof}
Assume that $(\Sigma)$ is $(n,m)$-pure and let $M$ be a $(n,m)$-presented $R_P$-module where $P$ is a maximal ideal. There exists a $(n,m)$-presented $R$-module $M'$ such that $M\cong M'_P$ and $M\otimes_{R_P}(\Sigma)_P\cong (M'\otimes_R(\Sigma))_P$. We deduce that $(\Sigma)_P$ is $(n,m)$-pure.

Conversely, suppose that $(\Sigma)$ is the sequence $0\rightarrow A\rightarrow B\rightarrow C\rightarrow 0.$ Let $M$ be a $(n,m)$-presented $R$-module. Then, for each maximal ideal $P$, $(\Sigma)_P$ is $(n,m)$-pure over $R$ since $M\otimes_R(\Sigma)_P\cong M_P\otimes_{R_P}(\Sigma)_P$. On the other hand, since $M\otimes_R(\prod_{P\in\mathrm{Max}\ R}(\Sigma)_P)\cong (\prod_{P\in\mathrm{Max}\ R} M\otimes_R(\Sigma)_P)$, $\prod_{P\in\mathrm{Max}\ R}A_P$ is a $(n,m)$-pure submodule of $\prod_{P\in\mathrm{Max}\ R}B_P$. By \cite[Lemme 1.3]{Cou82} $A$ is isomorphic to a pure submodule of $\prod_{P\in\mathrm{Max}\ R}A_P$. We successively deduce that $A$ is a $(n,m)$-pure submodule of $\prod_{P\in\mathrm{Max}\ R}B_P$ and $B$.
\end{proof}

\section{Comparison of purities over a semiperfect ring}
\label{S:semiperfect}

In this section we shall compare $(n,m)$-purities for different pairs of integers $(n,m)$. In \cite{PPR99} some various purities are also compared. In particular some necessary and sufficient conditions on a ring $R$ are given for the $(1,1)$-purity to be equivalent to the $(\aleph_0,\aleph_0)$-purity.

The following lemma is due to Lawrence Levy,  see \cite[Lemma 1.3]{WiWi75}. If $M$ be a finitely generated left (or right) $R$-module, we denote by $\mathrm{gen}\ M$ its minimal number of generators.

\begin{lemma}
\label{L:Levy} Let $R$ be a ring. Assume there exists a positive integer $p$ such that $\mathrm{gen}\ A\leq p$ for each finitely generated left ideal $A$ of $R$. Then $\mathrm{gen}\ N\leq p\times\mathrm{gen}\ M$, if $N$ is a finitely generated submodule of a finitely generated left $R$-module $M$.
\end{lemma}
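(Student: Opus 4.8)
The plan is to reduce the statement to the case of a free module and then argue by induction on its rank, keeping track of \emph{explicit} generating sets rather than working with intersections of submodules. To make the reduction, fix an epimorphism $\pi\colon R^t\to M$ with $t=\mathrm{gen}\ M$; writing $N=\sum_{l=1}^{k}Rn_l$ and lifting each $n_l$ to some $f_l\in R^t$, the submodule $L=\sum_{l=1}^{k}Rf_l$ of $R^t$ is finitely generated and $\pi(L)=N$, so $\mathrm{gen}\ N\le\mathrm{gen}\ L$. Hence it is enough to prove that every finitely generated submodule of $R^t$ is $pt$-generated.

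I would do this by induction on $t$. The case $t=1$ is exactly the hypothesis, since a finitely generated submodule of $R$ is a finitely generated left ideal. For $t\ge 2$, let $L=\sum_{l=1}^{k}Rf_l\subseteq R^t$ and let $\pi_1\colon R^t\to R$ be the first-coordinate projection. Then $\pi_1(L)=\sum_{l}R\,\pi_1(f_l)$ is a finitely generated left ideal, hence $p$-generated: choose $a_1,\dots,a_p$ generating it and $g_1,\dots,g_p\in L$ with $\pi_1(g_j)=a_j$. For each $l$, write $\pi_1(f_l)=\sum_{j}b_{lj}a_j$ and put $f'_l=f_l-\sum_{j}b_{lj}g_j$; then $\pi_1(f'_l)=0$, so $f'_1,\dots,f'_k$ lie in $\ker\pi_1\cong R^{t-1}$, and, because $f_l=f'_l+\sum_{j}b_{lj}g_j$, the elements $g_1,\dots,g_p,f'_1,\dots,f'_k$ generate $L$. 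The submodule $L'=\sum_{l}Rf'_l$ of $R^{t-1}$ is finitely generated, so by the induction hypothesis it is $p(t-1)$-generated, and therefore $L$ is generated by $p+p(t-1)=pt$ elements.

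The point requiring care — and the reason I would not simply write $M=M'+Rx$ with $\mathrm{gen}\ M'<\mathrm{gen}\ M$ and intersect $N$ with $M'$ — is that $L\cap\ker\pi_1$ need not be finitely generated, so the induction could not be applied to it. Passing to the free module $R^t$ circumvents this, because the module $L'$ spanned by the corrected generators $f'_l$ is finitely generated by construction, and one only uses the inclusion $L\subseteq\sum_{j}Rg_j+L'$ rather than the (possibly false) equality $L'=L\cap\ker\pi_1$. Everything else is routine bookkeeping.
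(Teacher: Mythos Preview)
Your proof is correct. The reduction to a free module, the projection to the first coordinate, and the correction of generators to lie in the kernel are all carried out cleanly, and your remark about why one must work with the span $L'$ of the corrected generators rather than the intersection $L\cap\ker\pi_1$ is exactly the right observation.

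There is nothing to compare against in the paper itself: the authors do not give a proof of this lemma but simply attribute it to Lawrence Levy and cite \cite[Lemma~1.3]{WiWi75}. Your argument is the standard one and is almost certainly what appears in the cited source; in any case it stands on its own.
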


From this lemma and Theorem~\ref{T:pure} we deduce the following:
\begin{proposition}
\label{P:genIdeal} Let $R$ be a ring. Assume there exists a positive integer $p$ such that $\mathrm{gen}\ A\leq p$ for each finitely generated left ideal $A$ of $R$. Then, for each positive integer $n$:
\begin{enumerate}
\item each $(n,np)$-pure exact sequence of right modules is $(n,\aleph_0)$-pure exact;
\item each $(np,n)$-pure exact sequence of left modules is $(\aleph_0,n)$-pure exact.
\end{enumerate}
\end{proposition}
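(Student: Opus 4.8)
The plan is to prove part (1) directly via the equational characterization of $(n,m)$-purity (condition (3) of Theorem~\ref{T:pure}), and then obtain part (2) by applying part (1) to the character modules using the equivalence of (1) and (4) in that same theorem.

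For part (1), suppose $(\Sigma)\colon 0\to A\to B\to C\to 0$ is a sequence of right $R$-modules which is $(n,np)$-pure; I must show it is $(n,m)$-pure for every positive integer $m$, i.e. every solvable-in-$B$ system of $n$ equations over $A$ in $m$ unknowns is solvable in $A$. So take a system $\sum_{j=1}^{m} r_{i,j}x_j = a_i$ ($i=1,\dots,n$) with $a_i\in A$ which has a solution $x_j = b_j\in B$. The coefficient matrix $(r_{i,j})$ has rows $\rho_1,\dots,\rho_n\in R^m$, and I consider the submodule $N$ of the free right module $R^n$ — wait, I want the submodule of a finitely generated module generated by the \emph{columns} reorganised appropriately. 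The cleanest route: let $M$ be the right $R$-module $R^n/L$, where $L$ is the submodule of $R^n$ generated by the $m$ columns $c_1,\dots,c_m$ of the matrix, $c_j = (r_{1,j},\dots,r_{n,j})^{T}$. Since $L$ is a finitely generated submodule of the free (hence finitely generated, with $\mathrm{gen} \le n$) module $R^n$, Lemma~\ref{L:Levy} gives $\mathrm{gen}\ L \le np$. Thus $L$ is generated by some $np$ elements, which means $M$ is $(n,np)$-presented. Now the key point, which is standard in purity theory: solvability of the system over a module $X$ is governed exactly by whether a certain element of $M\otimes_R X$ lifts, and $(n,m)$-purity of $(\Sigma)$ is detectable on the $(n,m)$-presented module built from the same coefficient matrix as well as on any module that ``contains'' that system. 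Concretely, having the system solvable in $B$ but needing it solvable in $A$ translates, via the usual dictionary (see the proof style of \cite[Chapter I, Section 8]{FuSa01}), into a lifting problem for $M\otimes_R(\Sigma)$, and since $M$ is $(n,np)$-presented and $(\Sigma)$ is $(n,np)$-pure, $M\otimes_R(\Sigma)$ stays exact, giving the required solution in $A$.

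For part (2): let $(\Sigma)$ be an $(np,n)$-pure exact sequence of left $R$-modules. By Theorem~\ref{T:pure}(4) applied with $S=\mathbb{Z}$, $E=\mathbb{Q}/\mathbb{Z}$ (so that $\mathrm{Hom}_S((\Sigma),E)$ is the character-module sequence $(\Sigma)^{\sharp}$), the sequence $(\Sigma)^{\sharp}$ of right $R$-modules is $(n,np)$-pure. Hence $R$ being left-$p$-generated-coherent in the sense hypothesised — precisely the hypothesis of part (1) — part (1) applies to $(\Sigma)^{\sharp}$, so $(\Sigma)^{\sharp}$ is $(n,\aleph_0)$-pure, i.e. $(n,m)$-pure for all $m$. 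Applying the equivalence (1)$\Leftrightarrow$(4) of Theorem~\ref{T:pure} in the other direction shows that $(\Sigma)^{\sharp\sharp}$, hence $(\Sigma)$ itself (via the pure embedding of a module into its double character module, as already used in the proof of Proposition~\ref{P:PureInj}), is $(m,n)$-pure for every $m$; that is, $(\Sigma)$ is $(\aleph_0,n)$-pure exact.

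The main obstacle is bookkeeping in part (1): getting the translation between ``a system of $n$ equations in $m$ unknowns over $A$, solvable in $B$'' and ``exactness of $M\otimes_R(\Sigma)$ at the relevant spot'' exactly right, so that the $(n,np)$-presented module $M = R^n/L$ genuinely witnesses solvability of the original $m$-unknown system and not merely of some related $np$-unknown system. One must check that reducing the number of generators of the relation submodule from $m$ to $np$ via Lemma~\ref{L:Levy} does not change the module $M$ up to isomorphism — it does not, since $M$ depends only on $L$, not on a chosen generating set of $L$ — and that the map $R\langle x_1,\dots,x_m\rangle \to M$ sending the standard basis to the images of the generators still encodes the same constraint. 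Once this identification is in place, both parts follow formally from Lemma~\ref{L:Levy} and Theorem~\ref{T:pure}, with no further computation.
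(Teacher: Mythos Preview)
Your argument is essentially the paper's: the proposition follows directly from Lemma~\ref{L:Levy} and Theorem~\ref{T:pure}, and the paper says exactly that, with no further details. Two minor points. In part~(1) you call $M=R^n/L$ a \emph{right} $R$-module, but for a sequence $(\Sigma)$ of right modules the test objects for $(n,m)$-purity are $(n,m)$-presented \emph{left} modules; the column submodule $L$ should be a left submodule of the left module $R^n$, so that Lemma~\ref{L:Levy} (stated for left modules) applies. In part~(2) your detour through the double character module is unnecessary: Theorem~\ref{T:pure} already gives the biconditional $(\Sigma)$ is $(m,n)$-pure $\Longleftrightarrow$ $(\Sigma)^{\sharp}$ is $(n,m)$-pure, so once you know $(\Sigma)^{\sharp}$ is $(n,m)$-pure for all $m$ you are done without invoking $(\Sigma)^{\sharp\sharp}$. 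Alternatively, one can bypass character modules entirely by using condition~(2) of Theorem~\ref{T:pure}: $(\aleph_0,n)$-purity of a left-module sequence amounts to $\mathrm{Hom}_R(G,(\Sigma))$ being exact for every $(n,m)$-presented left module $G$ with $m$ arbitrary, and Lemma~\ref{L:Levy} forces any such $G$ to be $(n,np)$-presented.
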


\begin{corollary}
\label{C:Artinian} Let $R$ be a left Artinian ring. Then there exists a positive integer $p$ such that, for each positive integer $n$:
\begin{enumerate}
\item each $(n,np)$-pure exact sequence of right modules is $(n,\aleph_0)$-pure exact;
\item each $(np,n)$-pure exact sequence of left modules is $(\aleph_0,n)$-pure exact.
\end{enumerate}
\end{corollary}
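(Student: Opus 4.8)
The plan is to observe that Corollary~\ref{C:Artinian} is a direct specialization of Proposition~\ref{P:genIdeal}: both conclusions (1) and (2) follow once we exhibit a positive integer $p$ such that $\mathrm{gen}\ A\leq p$ for every finitely generated left ideal $A$ of $R$. So the only work is to produce such a $p$ from the hypothesis that $R$ is left Artinian, and then invoke Proposition~\ref{P:genIdeal} verbatim.

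First I would recall that a left Artinian ring is left Noetherian (Hopkins--Levitzki), so the left module ${}_RR$ admits a composition series; let $p$ denote its length $\ell({}_RR)$. Every left ideal $A$ of $R$ is a submodule of ${}_RR$, hence has finite length $\ell(A)\leq p$. Next I would bound the number of generators of a module of finite length by its length: if $a_1,\dots,a_k$ is a generating set of $A$ with $k=\mathrm{gen}\ A$ minimal, then the chain
\[0\subsetneq Ra_1\subsetneq Ra_1+Ra_2\subsetneq\cdots\subsetneq Ra_1+\cdots+Ra_k=A\]
is strictly increasing (each inclusion is proper by minimality of the generating set), which forces $k\leq\ell(A)$. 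Combining, $\mathrm{gen}\ A\leq\ell(A)\leq p$ for every (finitely generated) left ideal $A$, which is exactly the hypothesis of Proposition~\ref{P:genIdeal}; applying that proposition yields both (1) and (2).

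There is essentially no obstacle here; the statement is a formal consequence of Proposition~\ref{P:genIdeal} together with the standard fact that $R$ left Artinian gives a uniform length bound on ${}_RR$, so the only point requiring a line of justification is the elementary inequality $\mathrm{gen}\ A\leq\ell(A)$ via the strictly increasing chain above. One could alternatively note that Lemma~\ref{L:Levy} applied with $M=R$ already shows $\mathrm{gen}\ N\leq p$ for finitely generated submodules $N$ of $R$ once one knows $\mathrm{gen}\ A\leq p$ for finitely generated left ideals, but for the corollary it is cleanest to derive the ideal bound directly from the composition length of ${}_RR$.
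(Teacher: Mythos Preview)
Your proof is correct and essentially identical to the paper's: both bound $\mathrm{gen}\ A$ by the composition length of ${}_RR$ via $\mathrm{gen}\ A\leq\ell(A)\leq\ell({}_RR)$, then invoke Proposition~\ref{P:genIdeal}. The only cosmetic difference is that the paper takes $p=\sup\{\mathrm{gen}\ A\mid A\ \text{left ideal}\}$ after establishing this bound, whereas you take $p=\ell({}_RR)$ directly; your explicit chain argument for $\mathrm{gen}\ A\leq\ell(A)$ is a detail the paper leaves implicit.
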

\begin{proof}
Each finitely generated left $R$-module $M$ has a finite length  denoted by $\mathrm{length}\ M$, and $\mathrm{gen}\ M\leq\mathrm{length}\ M$. So, for each left ideal $A$ we have $\mathrm{gen}\ A\leq\mathrm{length}\ R$.
We choose $p=\sup\{\mathrm{gen}\ A\mid A\ \mathrm{left\ ideal\ of}\ R\}$  and we apply the previous proposition.
\end{proof}

Let $R$ be a ring and $J$ its Jacobson radical. Recall that $R$ is {\it semiperfect} if $R/J$ is semisimple and idempotents lift modulo $J$.

\begin{theorem}
\label{T:semiperfect} Let $R$ be semiperfect ring. Assume that each indecomposable finitely presented cyclic left $R$-module has a local endomorphism ring. The following conditions are equivalent:
\begin{enumerate}
\item there exists an  integer $p>0$ such that, for each  integer $n>0$, each $(n,np)$-pure exact sequence of right modules is $(n,\aleph_0)$-pure exact;
\item there exists an  integer $p>0$ such that, for each  integer $n>0$, each $(np,n)$-pure exact sequence of left modules is $(\aleph_0,n)$-pure exact;
\item there exists an  integer $q>0$ such that each $(1,q)$-pure exact sequence of right modules is $(1,\aleph_0)$-pure exact;
\item there exists an  integer $q>0$ such that each $(q,1)$-pure exact sequence of left modules is $(\aleph_0,1)$-pure exact;
\item there exists an integer $q>0$ such that each indecomposable finitely presented cyclic left module is $q$-related;
\item there exists an integer $p>0$ such that $\mathrm{gen}\ A\leq p$ for each finitely generated left ideal $A$ of $R$.
\end{enumerate}
Moreover, if each indecomposable finitely presented  left $R$-module has a local endomorphism ring, these conditions are equivalent to the following:
\begin{itemize}
\item[(7)] there exist two positive integers $n,\ m$ such that each $(n,m)$-pure exact sequence of right modules is $(n,\aleph_0)$-pure exact;
\item [(8)] there exist two positive integers $n,\ m$ such that  each $(m,n)$-pure exact sequence of left modules is $(\aleph_0,n)$-pure exact;
\end{itemize}
\end{theorem}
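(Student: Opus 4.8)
\emph{Overall strategy.} The plan is to prove the cycle $(6)\Rightarrow(1)\Rightarrow(3)\Leftrightarrow(4)\Rightarrow(5)\Rightarrow(6)$ together with $(6)\Rightarrow(2)\Rightarrow(4)$, and then, under the extra hypothesis on indecomposable finitely presented left modules, to append $(1)\Rightarrow(7)\Leftrightarrow(8)\Rightarrow(5)$. Most arrows are cheap: $(6)\Rightarrow(1)$ and $(6)\Rightarrow(2)$ are Proposition~\ref{P:genIdeal}; specialising $n=1$ turns (1) into (3) and (2) into (4), and taking $(n,m)=(1,p)$ turns (1) into (7) and (2) into (8). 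For $(3)\Leftrightarrow(4)$ and $(7)\Leftrightarrow(8)$ I would use the character module functor $(-)^{\sharp}$: Theorem~\ref{T:pure} (with $S=\mathbb{Z}$, $E=\mathbb{Q}/\mathbb{Z}$) and its left--right symmetric form say that a short exact sequence $(\Sigma)$ of left modules is $(l,1)$-pure iff $(\Sigma)^{\sharp}$ is $(1,l)$-pure, and a short exact sequence $(\Sigma')$ of right modules is $(1,l)$-pure iff $(\Sigma')^{\sharp}$ is $(l,1)$-pure; applying (3) to $(\Sigma)^{\sharp}$ converts ``$(q,1)$-pure $\Rightarrow$ $(l,1)$-pure for all $l$'' for left sequences into (4), and conversely, and the same trick handles $(7)\Leftrightarrow(8)$. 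Thus everything reduces to $(4)\Rightarrow(5)$, $(5)\Rightarrow(6)$, and $(8)\Rightarrow(5)$.

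\emph{From (4), resp. (8), to (5).} First I reformulate via pure-projectivity. Since $(\aleph_0,1)$-purity is stronger than $(q,1)$-purity, (4) says these two purities have the same exact sequences, hence the same pure-projective modules; by Proposition~\ref{P:PureProj}, every finitely presented cyclic left module, being $(1,l)$-presented for some $l$, hence $(l,1)$-pure-projective, hence $(\aleph_0,1)$-pure-projective, is therefore $(q,1)$-pure-projective, so it is a direct summand of a direct sum of $(1,q)$-presented left modules. Now semiperfectness enters: by Nakayama's lemma a nonzero finitely generated left module has nonzero top, and the top of a cyclic (resp. $n$-generated) left module is a cyclic (resp. $n$-generated) semisimple $R/J$-module, of length at most $\mathrm{length}(R/J)$ (resp. $n\,\mathrm{length}(R/J)$); consequently splitting off indecomposable summands terminates and every finitely presented cyclic (resp. $n$-generated) left module is a \emph{finite} direct sum of indecomposable finitely presented cyclic (resp. finitely presented) left modules, each with local endomorphism ring by hypothesis. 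Given an indecomposable finitely presented cyclic $M=R/A$, I rewrite the direct sum of $(1,q)$-presented modules that contains $M$ as a summand as a direct sum of indecomposables with local endomorphism rings, and the Krull--Remak--Schmidt--Azumaya theorem (see \cite{Fac98}) forces $M$ to be a direct summand of a single $(1,q)$-presented module $R/A'$. Writing $R/A'\cong M\oplus M'$ with $M'=R/C$ finitely presented cyclic, Schanuel's lemma applied to the presentations $0\to A'\to R\to R/A'\to 0$ and $0\to A\oplus C\to R^2\to M\oplus M'\to 0$ gives $A\oplus C\oplus R\cong A'\oplus R^2$, so $\mathrm{gen}\ A\le q+2$ and $M$ is $(q+2)$-related: this is (5). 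Under the stronger hypothesis the identical argument with (8) in place of (4), now allowing non-cyclic finitely presented left modules of bounded generation, yields $\mathrm{gen}\ A\le m+n+1$, so (5) again holds; note that this step, like the next, uses only the weaker hypothesis for (4) and the stronger one only for (8).

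\emph{From (5) to (6).} Let $A$ be a finitely generated left ideal with $A\neq R$. As above $R/A=\bigoplus_{j=1}^{k}M_j$ with each $M_j=R/A_j$ indecomposable finitely presented cyclic and $A_j$ finitely generated; since each $M_j$ has nonzero top while the top of $R/A$ has length $\le t:=\mathrm{length}(R/J)$, we get $k\le t$, and by (5) each $A_j$ is generated by at most $q$ elements. Schanuel's lemma applied to $0\to A\to R\to R/A\to 0$ and $0\to\bigoplus_{j=1}^{k}A_j\to R^{k}\to\bigoplus_{j=1}^{k}R/A_j\to 0$ gives $A\oplus R^{k}\cong\bigl(\bigoplus_{j=1}^{k}A_j\bigr)\oplus R$, so $A$ is a homomorphic image of a module generated by at most $kq+1\le tq+1$ elements; thus $\mathrm{gen}\ A\le tq+1$, which is (6). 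This proves that $(1)$ through $(6)$ are equivalent; combined with $(7)\Leftrightarrow(8)$, $(1)\Rightarrow(7)$ and $(8)\Rightarrow(5)$ it also shows, under the stronger hypothesis, that $(7)$ and $(8)$ are equivalent to them.

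\emph{Expected main obstacle.} The Schanuel bookkeeping is routine and affects only the constants. The real difficulty is the step ``direct summand of a direct sum of bounded-relation modules $\Rightarrow$ direct summand of a single one'': one must verify that, assuming only ``$R$ semiperfect'' together with the stated local-endomorphism-ring hypothesis, the relevant finitely presented left modules genuinely decompose into \emph{finitely many} indecomposables with local endomorphism rings — here Nakayama's lemma, bounding the number of summands through the length of $R/J$, is the key device — and then one invokes the Krull--Remak--Schmidt--Azumaya exchange property. Pinning down precisely which local-endomorphism-ring hypothesis (cyclic for $(1)$--$(6)$, general for $(7)$--$(8)$) is what is needed at this point, and no more, is the heart of the argument.
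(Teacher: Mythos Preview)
Your proof is correct and follows essentially the same route as the paper's: the same cycle of implications, the same use of Theorem~\ref{T:pure} for the left--right equivalences, and the same key step of reducing $(4)\Rightarrow(5)$ (resp.\ $(8)\Rightarrow(5)$) to Krull--Schmidt over a semiperfect ring after decomposing the $(1,q)$-presented (resp.\ $(n,m)$-presented) modules into indecomposables with local endomorphism rings. The only real difference is bookkeeping: where the paper asserts directly that the indecomposable summand is $(1,q)$-presented and, in $(5)\Rightarrow(6)$, uses the snake lemma on the two presentations of $R/A$, you instead invoke Schanuel's lemma and obtain slightly looser constants ($q+2$, $tq+1$, $m+n+1$); since the statements only ask for the \emph{existence} of some bound, either device suffices. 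Your explicit bound on the number $k$ of indecomposable summands via $\mathrm{length}(R/J)$ coincides with the paper's bound via the number $s$ of indecomposable projective summands of $_RR$, as these two numbers are equal.
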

\begin{proof}
By Proposition~\ref{P:genIdeal} $(6)\Rightarrow (1)$. By Theorem~\ref{T:pure} $(1)\Leftrightarrow (2)$, $(3)\Leftrightarrow (4)$ and $(7)\Leftrightarrow (8)$. It is obvious that $(2)\Rightarrow (4)$ and $(2)\Rightarrow (7)$.

$(4)\Rightarrow (5)$. Let $C$ be an indecomposable finitely presented cyclic left module. Then $C$ is $(q,1)$-pure-projective. So, $C$ is a direct summand of a finite direct sum of $(1,q)$-presented left modules. Since $R$ is semiperfect, we may assume that these $(1,q)$-presented left modules are indecomposable.  So, by Krull-Schmidt theorem $C$ is $(1,q)$-presented.

$(5)\Rightarrow (6)$. Let $A$ be a finitely generated left ideal. Then $R/A=\oplus_{i=1}^tR/A_i$ where, for each $i=1,\dots,t$, $A_i$ is a left ideal and $R/A_i$ is indecomposable. We have the following commutative diagram with exact horizontal sequences:
\[\begin{matrix}
0 & \rightarrow & \oplus_{i=1}^tA_i & \rightarrow & R^t & \rightarrow & \oplus_{i=1}^tR/A_i & \rightarrow & 0 \\
{} & {} & \downarrow & {} & \downarrow & {} & \downarrow & {} & {} \\
0 & \rightarrow & A & \rightarrow & R & \rightarrow & R/A & \rightarrow & 0 \\
\end{matrix}\]
Since the right vertical map is an isomorphism, we deduce from the snake lemma that the other two vertical homomorphisms have isomorphic cokernels. It follows that $\mathrm{gen}\ A\leq tq+1$ because $\mathrm{gen}\ A_i\leq q$ by $(5)$. On the other hand, let $P$ be a projective cover of $R/A$. Then $P$ is isomorphic to a direct summand of $R$. We know that the left module $R$ is a finite direct sum of indecomposable projective modules. Let $s$ the number of these indecomposable summands. It is easy to show that $t\leq s$. So, if $p=sq+1$, then $\mathrm{gen}\ A\leq p$.

$(8)\Rightarrow (5)$. Let $C$ be an indecomposable finitely presented cyclic left module. Then $C$ is $(m,n)$-pure-projective. So, $C$ is a direct summand of a finite direct sum of $(n,m)$-presented left modules. Since $R$ is semiperfect, we may assume that these $(n,m)$-presented left modules are indecomposable.  So, by the Krull-Schmidt theorem $C$ is $(1,m)$-presented.
\end{proof}

A ring $R$ is said to be \textit{strongly $\pi$-regular} if, for each $r\in R$, there exist $s\in R$ and an integer $q\geq 1$ such that $r^q=r^{q+1}s$. By \cite[Theorem 3.16]{Fac98} each strongly $\pi$-regular $R$ satisfies the following condition: for each $r\in R$, there exist $s\in R$ and an integer $q\geq 1$ such that $r^q=sr^{q+1}$. Recall that a left  $R$-module $M$ is said to be {\it Fitting} if for each endomorphism $f$ of $M$ there exists a positive integer $t$ such that $M=\ker\ f^t\oplus f^t(M)$.

\begin{lemma}
\label{L:Fitting} Let $R$ be a strongly $\pi$-regular semiperfect ring. Then:
\begin{enumerate}
\item each finitely presented cyclic left (or right) $R$-module is Fitting;
\item each indecomposable finitely presented cyclic left (or right) $R$-module has a local endomorphism ring.
\end{enumerate}
\end{lemma}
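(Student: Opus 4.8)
\textit{Proof proposal.} The plan is to reduce both statements to the endomorphism ring of the module, exploiting that over a semiperfect ring the endomorphism ring of a finitely generated module is again semiperfect, and that strong $\pi$-regularity of $R$ forces $J(R)$ to be nil.

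Write a finitely presented cyclic left module as $M=R/A$ with $A$ a finitely generated left ideal; then $E:=\mathrm{End}_R(M)$ consists of the right multiplications $\rho_x$ by coset representatives $x$ in the idealizer $I(A)=\{x\in R\mid Ax\subseteq A\}$, and $\rho_x^{\,k}=\rho_{x^k}$. Since $M$ is finitely generated over the semiperfect ring $R$, $E$ is semiperfect and $J(E)=\{f\in E\mid \mathrm{im}\,f\subseteq\mathrm{rad}\,M\}$ (a standard fact on finitely generated modules over semiperfect rings; here $\mathrm{rad}\,M=JM$ with $J=J(R)$, because $M/JM$ is a semisimple $R/J$-module). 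The first key point is that $J(E)$ is nil: if $\rho_x\in J(E)$ then $x\in J+A$, say $x\equiv j\pmod A$ with $j\in J$, and a short induction — using $Ax\subseteq A$ together with $ja\in A$ for every $a\in A$ — gives $\rho_{x^k}(\bar r)=\overline{rj^k}$ for all $r\in R$ and $k\ge 1$; as $R$ is strongly $\pi$-regular, $J$ is nil, so $j^N=0$ for some $N$ and hence $\rho_x^{\,N}=0$.

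Thus $E$ is a semiperfect ring with nil Jacobson radical, and such a ring is strongly $\pi$-regular (modulo $J(E)$ it is semisimple artinian, hence strongly $\pi$-regular, and this property is inherited by $E$ since $J(E)$ is nil). Finally, a module whose endomorphism ring is strongly $\pi$-regular is Fitting: given $f\in E$ there are $k\ge 1$ and $y\in E$ commuting with $f$ with $f^k=f^{k+1}y$; then $e:=f^ky^k$ is an idempotent commuting with $f$, $(f(1-e))^k=0$ and $fe$ is invertible in $eEe$, so the decomposition $M=eM\oplus(1-e)M$ is $f$-invariant, $f^kM=eM$ and $\ker f^k=(1-e)M$; hence $M=\ker f^k\oplus f^kM$. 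This proves (1), and the same argument works for right modules.

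For (2), assume in addition that $M$ is indecomposable. Then $E$ has no idempotents other than $0$ and $1$, so in the argument above the idempotent $e$ attached to each $f\in E$ is $0$ or $1$, i.e. every $f$ is nilpotent or a unit; since in a ring with no nontrivial idempotents in which each element is a unit or nilpotent the non-units form a two-sided ideal (a nilpotent $a$ makes $1-a$ a unit, and a non-unit cannot possess a one-sided inverse), $E$ is local. Alternatively, $(2)$ follows from semiperfectness of $E$ alone: a semiperfect ring with no nontrivial idempotents is local. The step I expect to be the real obstacle is controlling $E=\mathrm{End}_R(R/A)$ itself: presented as the idealizer quotient $I(A)/A$ it is not visibly strongly $\pi$-regular, and the whole point is to bypass the idealizer, using semiperfectness of $R$ to see that $E$ is semiperfect and the computation with $\rho_j$ to see that $J(E)$ is nil — after which the conclusion is formal.
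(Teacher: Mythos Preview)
Your argument is correct, but it takes a different route from the paper's. The paper simply points to \cite[Lemma 3.21]{Fac98}, whose proof lifts $f$ to an endomorphism $\tilde f$ of the projective cover $P$ of $M$; since $M$ is cyclic, $P$ is a direct summand of $R$, so $\mathrm{End}_R(P)\cong (eRe)^{\mathrm{op}}$ is a corner of $R$ and hence strongly $\pi$-regular without any hypothesis on $\mathrm{M}_n(R)$. The Fitting decomposition is then obtained on $P$ and pushed down to $M$. Part~(2) is deduced, as you also do, from \cite[Lemma 2.21]{Fac98}.

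You instead work intrinsically with $E=\mathrm{End}_R(M)$: the explicit computation $\rho_x=\rho_j$ with $j\in J(R)$ nil is a nice, concrete way to see that $J(E)$ is nil, and from there you pass to ``$E$ is strongly $\pi$-regular, hence $M$ is Fitting''. This is arguably more transparent than invoking the projective-cover machinery. Two of your ``standard facts'' deserve explicit references, however: that $E$ is semiperfect with $J(E)=\{f:fM\subseteq J(R)M\}$ is true for any finitely generated module with a projective cover (see e.g.\ Anderson--Fuller, Cor.~27.6 and its proof, or the discussion around \cite[Lemma 3.21]{Fac98}), and the assertion that strong $\pi$-regularity lifts modulo a nil ideal is a theorem in the Azumaya--Dischinger circle rather than a triviality. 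With those citations in place your proof stands; the paper's version trades your explicit $\rho_j$ calculation for a one-line appeal to the projective cover.
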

\begin{proof} In \cite[Lemma 3.21]{Fac98} it is proven that every finitely presented $R$-module is a Fitting module if $R$ is a semiperfect ring with $\mathrm{M}_n(R)$ strongly $\pi$-regular for all $n$. We do a similar proof to show $(1)$.

$(2)$. By \cite[Lemma 2.21]{Fac98} each indecomposable Fitting module has a local endomorphism ring.
\end{proof}

\begin{corollary}
\label{C:semiperfect} Let $R$ be a strongly $\pi$-regular semiperfect ring. The following conditions are equivalent:
\begin{enumerate}
\item there exists an  integer $p>0$ such that, for each  integer $n>0$, each $(n,np)$-pure exact sequence of right modules is $(n,\aleph_0)$-pure exact;
\item there exists an  integer $p>0$ such that, for each  integer $n>0$, each $(np,n)$-pure exact sequence of left modules is $(\aleph_0,n)$-pure exact;
\item there exists an  integer $q>0$ such that each $(1,q)$-pure exact sequence of right modules is $(1,\aleph_0)$-pure exact;
\item there exists an  integer $q>0$ such that each $(q,1)$-pure exact sequence of left modules is $(\aleph_0,1)$-pure exact;
\item there exists an integer $q>0$ such that each indecomposable finitely presented cyclic left module is $q$-related;
\item there exists an integer $p>0$ such that $\mathrm{gen}\ A\leq p$ for each finitely generated left ideal $A$ of $R$.
\end{enumerate}
Moreover, if $\mathrm{M}_n(R)$ is strongly $\pi$-regular for all $n>0$, these conditions are equivalent to the following:
\begin{itemize}
\item[(7)] there exist two positive integers $n,\ m$ such that each $(n,m)$-pure exact sequence of right modules is $(n,\aleph_0)$-pure exact;
\item [(8)] there exist two positive integers $n,\ m$ such that  each $(m,n)$-pure exact sequence of left modules is $(\aleph_0,n)$-pure exact;
\end{itemize}
\end{corollary}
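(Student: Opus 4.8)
The plan is to obtain this corollary as a direct consequence of Theorem~\ref{T:semiperfect}, once we check that its standing hypothesis is automatically fulfilled in the present situation. Since $R$ is a strongly $\pi$-regular semiperfect ring, Lemma~\ref{L:Fitting}(2) tells us that every indecomposable finitely presented cyclic left $R$-module has a local endomorphism ring. This is exactly the assumption required to invoke Theorem~\ref{T:semiperfect}, so conditions $(1)$--$(6)$ are equivalent with no further work.

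For the supplementary equivalence with $(7)$ and $(8)$ we must, according to the ``moreover'' part of Theorem~\ref{T:semiperfect}, establish the stronger fact that \emph{every} indecomposable finitely presented left $R$-module (not just the cyclic ones) has a local endomorphism ring. Here I would invoke \cite[Lemma 3.21]{Fac98}: when $R$ is semiperfect and $\mathrm{M}_n(R)$ is strongly $\pi$-regular for all $n>0$, every finitely presented $R$-module is a Fitting module; then \cite[Lemma 2.21]{Fac98} gives that each indecomposable Fitting module has a local endomorphism ring. Hence the hypothesis of the ``moreover'' part of Theorem~\ref{T:semiperfect} holds, and $(1)$--$(8)$ are all equivalent.

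There is no genuine obstacle here: the entire argument amounts to matching the hypotheses of Theorem~\ref{T:semiperfect} with what Lemma~\ref{L:Fitting} and the cited results of \cite{Fac98} already supply. The only point that needs a little attention is the left/right bookkeeping, since Theorem~\ref{T:semiperfect} phrases conditions $(2)$, $(4)$, $(5)$, $(8)$ in terms of left modules and $(1)$, $(3)$, $(7)$ in terms of right modules; this is harmless because strong $\pi$-regularity is left-right symmetric (by \cite[Theorem 3.16]{Fac98}) and Lemma~\ref{L:Fitting} is stated in the ``left (or right)'' form, so both one-sided incarnations of the needed local-endomorphism property are available simultaneously.
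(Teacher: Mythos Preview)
Your proposal is correct and follows essentially the same route as the paper: invoke Lemma~\ref{L:Fitting} to verify the hypothesis of Theorem~\ref{T:semiperfect} for the equivalence of $(1)$--$(6)$, and then use \cite[Lemmas 3.21 and 2.21]{Fac98} under the extra assumption on $\mathrm{M}_n(R)$ to handle $(7)$ and $(8)$. Your final paragraph on left/right bookkeeping is harmless but unnecessary, since Theorem~\ref{T:semiperfect} already absorbs that duality internally via Theorem~\ref{T:pure}.
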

\begin{proof}
 By Lemma~\ref{L:Fitting} each indecomposable finitely presented cyclic left $R$-module has a local endomorphism ring.
If  $\mathrm{M}_n(R)$ is strongly $\pi$-regular for all $n$, then by \cite[Lemmas 3.21 and 2.21]{Fac98} each indecomposable finitely presented left $R$-module has a local endomorphism ring. So, we apply Theorem~\ref{T:semiperfect}.
\end{proof}

Recall that a ring $R$ is {\it right perfect} if each flat right $R$- module is projective.

\begin{corollary}
\label{C:perfect} Let $R$ be a right perfect ring. The following conditions are equivalent:
\begin{enumerate}
\item there exists an  integer $p>0$ such that, for each  integer $n>0$, each $(n,np)$-pure exact sequence of right modules is $(n,\aleph_0)$-pure exact;
\item there exists an  integer $p>0$ such that, for each  integer $n>0$, each $(np,n)$-pure exact sequence of left modules is $(\aleph_0,n)$-pure exact;
\item there exists an  integer $q>0$ such that each $(1,q)$-pure exact sequence of right modules is $(1,\aleph_0)$-pure exact;
\item there exist two positive integers $n,\ m$ such that each $(n,m)$-pure exact sequence of right modules is $(n,\aleph_0)$-pure exact;
\item there exist two positive integers $n,\ m$ such that  each $(m,n)$-pure exact sequence of left modules is $(\aleph_0,n)$-pure exact;
\item there exists an integer $p>0$ such that $\mathrm{gen}\ A\leq p$ for each finitely generated left ideal $A$ of $R$.
\end{enumerate}
\end{corollary}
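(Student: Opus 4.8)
The plan is to deduce this corollary from Corollary~\ref{C:semiperfect}, so the whole argument reduces to checking that a right perfect ring, together with all of its matrix rings, satisfies the hypotheses of that corollary. First I would record that a right perfect ring $R$ is semiperfect: by Bass's theorem $R/J$ is semisimple and $J$ is right $T$-nilpotent; applying $T$-nilpotence to a constant sequence shows every element of $J$ is nilpotent, so $J$ is nil, whence idempotents lift modulo $J$.

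Next comes the key point, that $R$ is strongly $\pi$-regular. One of Bass's equivalent forms of right perfectness is the descending chain condition on principal left ideals; applied to $Rr\supseteq Rr^2\supseteq\cdots$ it gives $Rr^q=Rr^{q+1}$ for some $q$, i.e. $r^q=sr^{q+1}$ for some $s\in R$. By the left--right symmetry of strong $\pi$-regularity (Dischinger's theorem; compare the discussion around \cite[Theorem 3.16]{Fac98}) this yields $r^q=r^{q+1}s'$ for a suitable $s'$, so $R$ is strongly $\pi$-regular in the sense used here. Since perfectness is a Morita invariant, $\mathrm{M}_n(R)$ is right perfect for every $n>0$, hence, by the two preceding observations, $\mathrm{M}_n(R)$ is strongly $\pi$-regular for every $n>0$ as well.

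It then suffices to invoke Corollary~\ref{C:semiperfect}: conditions $(1)$--$(6)$ there are equivalent for the semiperfect strongly $\pi$-regular ring $R$, and since moreover $\mathrm{M}_n(R)$ is strongly $\pi$-regular for all $n$, conditions $(7)$ and $(8)$ of that corollary join the list. But $(7)$ and $(8)$ of Corollary~\ref{C:semiperfect} are exactly $(4)$ and $(5)$ of the present statement, while $(1)$, $(2)$, $(3)$, $(6)$ coincide verbatim; hence all six conditions are equivalent. The only genuine work is the ring-theoretic input about perfect rings, the comparison-of-purities content being carried entirely by Corollary~\ref{C:semiperfect}; the step most worth double-checking is that strong $\pi$-regularity survives to every $\mathrm{M}_n(R)$, which is precisely why I would route through Morita invariance of perfectness rather than try to argue $\pi$-regularity of matrix rings by hand.
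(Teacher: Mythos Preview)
Your proposal is correct and follows essentially the same route as the paper: show that $\mathrm{M}_n(R)$ is right perfect (hence strongly $\pi$-regular) for all $n>0$, and then invoke Corollary~\ref{C:semiperfect}. The paper's proof is terser---it simply notes that right perfectness passes to $\mathrm{M}_n(R)$ and uses the descending chain condition on finitely generated left ideals to get strong $\pi$-regularity---but your additional remarks (semiperfectness via Bass, Dischinger's symmetry, Morita invariance) are all correct elaborations of the same argument.
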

\begin{proof}
For all $n>0$,  $\mathrm{M}_n(R)$ is right perfect. Since each right perfect ring satisfies the descending chain condition on finitely generated left ideals, then $\mathrm{M}_n(R)$ is strongly $\pi$-regular  for all $n>0$. We apply Corollary~\ref{C:semiperfect}.
\end{proof}

\section{Comparison of purities over a commutative ring}
\label{S:commutative}

In the sequel of this section $R$ is a  commutative local ring, except in Theorem~\ref{T:ComPur}. We  denote respectively by $P$ and $k$ its maximal ideal and  its residue field 

Let $M$ be a finitely presented $R$-module. Recall that $\mathrm{gen}\ M=\dim_k\ M/PM$. Let $F_0$ be a free $R$-module whose rank is $\mathrm{gen}\ M$ and let $\phi:F_0\rightarrow M$ be an epimorphism. Then $\ker\ \phi\subseteq PF_0$. We put $\mathrm{rel}\ M=\mathrm{gen}\  \ker\ \phi$. Let $F_1$ be a free  $R$-module whose rank is $\mathrm{rel}\ M$ and let $f:F_1\rightarrow F_0$ be a homomorphism such that $\mathrm{im}\ f=\ker\ \phi$. Then $\ker\ f\subseteq PF_1$. For any $R$-module $N$, we put $N^*=\mathrm{Hom}_R(N,R)$. Let $f^*:F_0^*\rightarrow F_1^*$ be the homomorphism deduced from $f$. We set $\mathrm{D}(M)=\mathrm{coker}\ f^*$ the Auslander and Bridger's dual of $M$. The following proposition is the version in commutative case of \cite[Theorem 2.4]{War75}:
\begin{proposition}
\label{P:AusBrid} Assume that $M$ has no projective summand. Then:
\begin{enumerate}
\item $\ker\ f^*\subseteq PF_0^*$ and $\mathrm{im}\ f^*\subseteq PF_1^*$;
\item $M\cong \mathrm{D}(\mathrm{D}(M))$ and $\mathrm{D}(M)$ has no projective summand;
\item $\mathrm{gen}\ \mathrm{D}(M)=\mathrm{rel}\ M$ and $\mathrm{rel}\ \mathrm{D}(M)=\mathrm{gen}\ M$;
\item if $M=M_1\oplus M_2$ then\\ $\mathrm{gen}\ M=\mathrm{gen}\ M_1 + \mathrm{gen}\ M_2$ and $\mathrm{rel}\ M=\mathrm{rel}\ M_1 + \mathrm{rel}\ M_2$.
\item $\mathrm{End}_R(\mathrm{D}(M))$ is local if and only if so is $\mathrm{End}_R(M)$.
\end{enumerate}
\end{proposition}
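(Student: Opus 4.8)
The plan is to exploit the fact that, under the hypothesis that $M$ has no projective summand, applying $\mathrm{Hom}_R(-,R)$ to the \emph{minimal} presentation $F_1 \xrightarrow{f} F_0 \xrightarrow{\phi} M \to 0$ fixed before the statement produces again a minimal presentation, namely $F_0^* \xrightarrow{f^*} F_1^* \to \mathrm D(M) \to 0$, and that $F \cong F^{**}$ and $f^{**}=f$ canonically for finitely generated free modules. Recall from the construction that $\mathrm{im}\,f = \ker\phi \subseteq PF_0$ and $\ker f \subseteq PF_1$; these two ``minimality'' inclusions drive everything.

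First I would prove (1). That $\mathrm{im}\,f^* \subseteq PF_1^*$ is immediate: for $\theta \in F_0^*$ one has $\theta(f(F_1)) \subseteq \theta(PF_0) \subseteq P$. The inclusion $\ker f^* \subseteq PF_0^*$ is the only place the hypothesis on $M$ is used. If some $\theta \in \ker f^* \setminus PF_0^*$ existed, it would send a basis vector of $F_0$ to a unit, hence be a split epimorphism $F_0 \to R$; writing $F_0 = \ker\theta \oplus L$ with $L \cong R$ and using $\ker\phi = \mathrm{im}\,f \subseteq \ker\theta$, one checks that $\phi|_L$ is injective and $M = \phi(\ker\theta) \oplus \phi(L)$ with $\phi(L) \cong R$ a free summand, a contradiction. (If $M=0$ everything is trivial.)

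Granting (1), parts (2) and (3) are formal. The epimorphism $F_1^* \to \mathrm D(M)$ is a projective cover (its kernel $\mathrm{im}\,f^*$ lies in $PF_1^*$) and $F_0^* \to \mathrm{im}\,f^*$ is a projective cover (its kernel $\ker f^*$ lies in $PF_0^*$), so comparing ranks gives $\mathrm{gen}\,\mathrm D(M) = \mathrm{rank}\,F_1^* = \mathrm{rel}\,M$ and $\mathrm{rel}\,\mathrm D(M) = \mathrm{gen}(\mathrm{im}\,f^*) = \mathrm{rank}\,F_0^* = \mathrm{gen}\,M$, which is (3). Dualizing this minimal presentation of $\mathrm D(M)$ and using $F_i^{**} \cong F_i$, $f^{**}=f$ identifies $\mathrm D(\mathrm D(M))$ with $\mathrm{coker}\,f = M$. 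That $\mathrm D(M)$ has no projective ($=$ free, as $R$ is local) summand again uses only minimality of $f^*$: such a summand would force $\mathrm{im}\,f^*$ into a proper free direct summand of $F_1^*$, which after dualizing would put a nonzero free direct summand of $F_1$ inside $\ker f \subseteq PF_1$, impossible by Nakayama.

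Part (4) is routine: $\mathrm{gen}$ is additive because $M/PM = M_1/PM_1 \oplus M_2/PM_2$, and $\mathrm{rel}$ is additive because the direct sum of minimal presentations of $M_1$ and $M_2$ is again minimal (as $P(A\oplus B) = PA \oplus PB$). For (5), which I expect to be the main obstacle, I would argue as Warfield does: a standard chain-map computation shows $\mathrm D$ is functorial on morphisms up to morphisms factoring through a free module, so by (2) it yields a ring anti-isomorphism $\underline{\mathrm{End}}_R(M) \cong \underline{\mathrm{End}}_R(\mathrm D(M))^{\mathrm{op}}$ between stable endomorphism rings, where $\underline{\mathrm{End}}_R(N) = \mathrm{End}_R(N)/\mathfrak p_N$ and $\mathfrak p_N$ is the ideal of endomorphisms that factor through a free module. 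It then suffices to show that for a nonzero finitely presented $N$ without projective summand over the commutative local ring $R$, $\mathrm{End}_R(N)$ is local if and only if $\underline{\mathrm{End}}_R(N)$ is; the nontrivial implication reduces to $\mathfrak p_N \subseteq \mathrm{rad}(\mathrm{End}_R(N))$, which I would get by showing that $1-g$ is surjective — hence an automorphism, since a surjective endomorphism of a finitely generated module over a commutative ring is bijective — for every $g = \beta\alpha \in \mathfrak p_N$ with $\alpha\colon N \to F$, $\beta\colon F \to N$ and $F$ finitely generated free: were $1-g$ not onto, Nakayama would yield $v \in N \setminus PN$ with $\beta(\alpha(v)) \equiv v \pmod{PN}$, forcing $\alpha(v)$ to be unimodular in $F$ and hence giving a free summand $\cong R$ of $N$, a contradiction. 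Apart from this step and the bookkeeping needed to make the functoriality of $\mathrm D$ precise, the whole proof is elementary linear algebra over a local ring.
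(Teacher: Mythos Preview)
The paper does not actually give a proof of this proposition: it merely introduces it as ``the version in commutative case of \cite[Theorem 2.4]{War75}'' and moves on. So there is nothing in the paper to compare your argument against, beyond the implicit reference to Warfield.

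Your proof is correct and is essentially Warfield's argument specialised to the commutative local case, as you yourself indicate. The reasoning in (1) is sound (the split surjection $\theta$ forces a free direct summand of $M$ because $\ker\phi\subseteq\ker\theta$ makes $\phi$ restrict to an isomorphism on the complement $L$), and your derivation of (2)--(4) from (1) is the standard one. For (5), your reduction to $\mathfrak p_N\subseteq\mathrm{rad}(\mathrm{End}_R(N))$ is exactly the right move; the step ``$1-g$ not onto $\Rightarrow$ $\alpha(v)$ unimodular'' works because the induced endomorphism of the finite-dimensional $k$-space $N/PN$ fails to be injective, yielding $v\notin PN$ with $g(v)\equiv v\pmod{PN}$, and then $\pi\circ\alpha:N\to R$ (with $\pi$ any coordinate detecting $\alpha(v)$) is a split surjection. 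One small remark: in passing from the anti-isomorphism of stable endomorphism rings to the equivalence of locality, you are implicitly using that $N\neq 0$ without projective summand forces $1_N\notin\mathfrak p_N$ (else $N$ would be projective, hence free, hence have $R$ as a summand); this is clear but worth saying. Otherwise nothing is missing.
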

\begin{lemma}
\label{L:iso} Let  $M$ be a finitely generated $R$-module, $s$ an endomorphism of $M$ and $\bar{s}$ the endomorphism of $M/PM$ induced by $s$. Then $s$ is an isomorphism if and only if so is $\bar{s}$.
\end{lemma}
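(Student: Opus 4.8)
The plan is to deduce the lemma from Nakayama's lemma together with the classical determinant trick (Cayley--Hamilton for finitely generated modules over a commutative ring). The implication ``$s$ an isomorphism $\Rightarrow$ $\bar s$ an isomorphism'' is immediate, since reduction modulo $P$ is the functor $-\otimes_R k$: if $t$ is a two-sided inverse of $s$, then $\bar t$ is a two-sided inverse of $\bar s$. For the converse I would proceed in two steps, first showing that $s$ is surjective, then that $s$ is injective.

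For surjectivity, assume $\bar s\colon M/PM\to M/PM$ is onto. Then $s(M)+PM=M$, so the finitely generated $R$-module $M/s(M)$ satisfies $P\cdot\bigl(M/s(M)\bigr)=M/s(M)$; since $R$ is local, Nakayama's lemma gives $M/s(M)=0$, i.e.\ $s$ is surjective. Note that only the surjectivity of $\bar s$ is used here, and in fact that is all the hypothesis really provides: $M/PM$ is a finite-dimensional $k$-vector space because $M$ is finitely generated, so a surjective endomorphism of it is automatically bijective.

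For injectivity I would invoke the standard fact that a surjective endomorphism of a finitely generated module over a commutative ring is an automorphism. Concretely, make $M$ a module over $R[X]$ by letting $X$ act as $s$; surjectivity of $s$ means $(X)M=M$, so by the determinant trick there is $g(X)\in R[X]$ with $(1+Xg(X))M=0$, and evaluating $X$ at $s$ yields $s\circ(-g(s))=(-g(s))\circ s=\mathrm{id}_M$ (using that $R[X]$ is commutative for the second equality). Hence $s$ is bijective, which together with the first paragraph completes the proof. The only non-formal ingredient is this last step: Nakayama alone yields surjectivity but not injectivity, and one genuinely needs finite generation of $M$ and the determinant trick (or a citation to a standard reference for it). This is the point I expect to be the crux, though it is entirely classical.
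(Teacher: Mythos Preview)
Your proof is correct and follows essentially the same route as the paper: the forward implication is immediate, surjectivity of $s$ comes from Nakayama's lemma, and injectivity from the fact that a surjective endomorphism of a finitely generated module over a commutative ring is bijective. The only difference is that the paper cites this last fact as a result of Vasconcelos (via \cite[Theorem V.2.3]{FuSa01}) whereas you supply the standard determinant-trick argument for it.
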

\begin{proof}
If $s$ is an isomorphism it is obvious that so is $\bar{s}$. Conversely, $\mathrm{coker}\ s=0$ by Nakayama lemma. So, $s$ is surjective. By using a Vasconcelos's result (see \cite[Theorem V.2.3]{FuSa01}) $s$ is bijective.
\end{proof}

\begin{proposition}
\label{P:locEnd} Assume that there exists an ideal $A$ with $\mathrm{gen}\ A=p+1$ where $p$ is a positive integer. Then, for each positive integers $n$ and $m$ with $(n-1)p+1\leq m\leq np+1$, there exists a finitely presented $R$-module $W_{p,n,m}$ whose endomorphism ring is local and such that $\mathrm{gen}\ W_{p,n,m}=n$ and $\mathrm{rel}\ W_{p,n,m}=m$. 
\end{proposition}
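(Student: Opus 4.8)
The plan is to generalize Warfield's construction of indecomposable finitely presented modules over a local ring which is not a valuation ring, using a $(p+1)$-element minimal generating set of $A$ in place of Warfield's two incomparable elements. Fix generators $a_1,\dots,a_{p+1}$ of $A$ whose images form a $k$-basis of $A/PA$. The one fact used throughout is that $a_i\notin\sum_{j\neq i}Ra_j$ for every $i$ --- otherwise $A$ would be $p$-generated --- and, more generally, $a_i\notin\sum_{j\in S}Ra_j+PA$ whenever $i\notin S$, since a relation $a_i=\sum_{j\in S}r_ja_j+\sum_jt_ja_j$ with all $t_j\in P$ can be solved for $a_i$ after inverting the unit $1-t_i$; in particular $(a_1,\dots,a_\ell)$ is minimally $\ell$-generated for $1\le\ell\le p+1$. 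The case $n=1$ is then immediate: set $W_{p,1,m}=R/(a_1,\dots,a_m)$ for $1\le m\le p+1$; this is cyclic, so $\mathrm{End}_R(W_{p,1,m})=R/(a_1,\dots,a_m)$ is local, $\mathrm{gen}\,W_{p,1,m}=1$, and $\mathrm{rel}\,W_{p,1,m}=\mathrm{gen}(a_1,\dots,a_m)=m$.

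For $n\ge2$ and $(n-1)p+1\le m\le np+1$ I would define $W_{p,n,m}$ to be the cokernel of an explicit $n\times m$ ``staircase'' matrix $\Phi$ over $R$ whose nonzero entries are among $a_1,\dots,a_{p+1}$, generalizing Warfield's bidiagonal matrices. One lists the columns so that their supports move monotonically down from row $1$ to row $n$: each passage from row $i$ to row $i+1$ is carried out by an interlocking block of columns supported on $\{e_i,e_{i+1}\}$ of the shape $(a_r,a_{r+1}),(a_{r+1},a_{r+2}),\dots$, and the $m-(n-1)p-1\in[0,p]$ remaining columns are ``caps'' $a_re_1$ or $a_re_n$ tacked onto the first or the last row. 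For $n=2$ one may take, up to deleting cap columns, $\Phi=\left(\begin{smallmatrix}a_{p+1}&a_p&\cdots&a_1&&&\\&&&a_{p+1}&a_p&\cdots&a_1\end{smallmatrix}\right)$ (the two displayed $a_1$ and $a_{p+1}$ overlapping in a single column, so that $m=2p+1$ before any deletion); for larger $n$ one stacks such strips, or equivalently one proceeds by induction on $n$, extending the staircase of some $W_{p,n-1,m'}$ by one further row.

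Granting such a $\Phi$, the three verifications are routine. Every entry of $\Phi$ lies in $P$, so the induced map $k^m\to k^n$ is zero, $W_{p,n,m}/PW_{p,n,m}=k^n$, and $\mathrm{gen}\,W_{p,n,m}=n$. To obtain $\mathrm{rel}\,W_{p,n,m}=m$ it suffices to show $\ker\Phi\subseteq PR^m$: reading $\Phi v=0$ coordinate by coordinate starting from the free end of the staircase, each step isolates a single $a_i$ against an $R$-combination of the other $a_j$'s (plus a term already known to lie in $PA$), and the key fact then forces the corresponding entry of $v$ into $P$; inductively $v\in PR^m$. Finally, an endomorphism $\phi$ of $W_{p,n,m}$ is represented by a pair of matrices $(\phi_0,\phi_1)$ with $\phi_0\Phi=\Phi\phi_1$, and by Lemma~\ref{L:iso} $\phi$ is an automorphism if and only if $\bar\phi_0\in\mathrm{M}_n(k)$ is invertible. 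Running the same coordinate-by-coordinate analysis on $\phi_0\Phi=\Phi\phi_1$, and chaining the resulting congruences through the linking blocks, forces $\bar\phi_0$ to be triangular with all diagonal entries equal to one and the same scalar $\lambda\in k$; hence $\bar\phi_0$ is $\lambda$ times the identity plus a nilpotent matrix, so it is invertible exactly when $\lambda\neq0$, while the matrix of $1-\phi$ is invertible modulo $P$ exactly when $\lambda\neq1$. Since $\lambda$ and $1-\lambda$ cannot both vanish, $\phi$ or $1-\phi$ is an automorphism, so $\mathrm{End}_R(W_{p,n,m})$ is local; in particular $W_{p,n,m}$ is indecomposable with no projective summand (it is not isomorphic to $R$, as $\mathrm{rel}\,W_{p,n,m}=m\ge1$).

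The main obstacle is combinatorial rather than conceptual: one must choose the staircase shape so that it realizes \emph{every} $m$ with $(n-1)p+1\le m\le np+1$ and so that both ``read off the coordinates'' inductions --- the one proving $\ker\Phi\subseteq PR^m$ and the one pinning down $\bar\phi_0$ --- succeed for that shape. The boundary values $m=(n-1)p+1$ and $m=np+1$ are the delicate ones, because naively appending extra columns to a smaller staircase destroys the $\mathrm{rel}$-count; the caps and the linking blocks have to be laid out so that at each stage precisely one generator $a_i$ gets isolated against the others, and carrying this bookkeeping out uniformly in $n$, $m$ and $p$ is where the real work lies.
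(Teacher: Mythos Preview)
Your strategy is exactly the paper's: a staircase matrix with entries drawn from a minimal generating set $a_1,\dots,a_{p+1}$ of $A$, followed by the three verifications (entries in $P$ gives $\mathrm{gen}=n$; reading a relation row by row and using that no $a_i$ lies in the span of the others gives $\mathrm{rel}=m$; the same row-by-row reading of $\phi_0\Phi=\Phi\phi_1$ forces $\bar\phi_0$ to be upper triangular with constant diagonal, whence locality via Lemma~\ref{L:iso}). So the architecture is correct and matches the paper.

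The gap is exactly the one you flag in your last paragraph: you never commit to a single explicit matrix and then carry out the two inductions for it, and your description of the linking blocks (``$(a_r,a_{r+1}),(a_{r+1},a_{r+2}),\dots$'') is more complicated than what is actually needed. The paper's choice is simpler than you fear. Each row transition uses a \emph{single} two-entry column, and every other column has a single nonzero entry: writing $j=pq+r$ with $1\le r\le p$, the $j$th column is $a_re_{q+1}$ when $r\ne 1$ or $q=0$, and $a_{p+1}e_q+a_1e_{q+1}$ when $r=1$ and $q\ge 1$; if $m=np+1$ one appends the extra column $a_{p+1}e_n$. Varying $m$ in the allowed range is handled not by ``caps'' on both ends but simply by truncating the last row to its first $r=m-(n-1)p$ entries. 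With this explicit matrix the two inductions become mechanical: a relation $\sum c_jx_j=0$, read off the coefficient of each $e_{q+1}$, gives $\sum_{i=1}^{p+1}c_{pq+i}a_i=0$ (or the truncated version on the last row), forcing each $c_j\in P$; and the intertwining $\phi_0\Phi=\Phi\phi_1$ applied first to the pure column $x_1=a_1e_1$ forces the subdiagonal of $\bar\phi_0$ into $P$, while applied to the linking columns $x_{p\ell+1}=a_{p+1}e_\ell+a_1e_{\ell+1}$ gives $\alpha_{q,\ell}\equiv\alpha_{q+1,\ell+1}\pmod P$, propagating the constant diagonal and the vanishing below it. No further bookkeeping is needed; the boundary values of $m$ require no separate treatment.
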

\begin{proof}
Suppose that $A$ is generated by $a_1,\dots,a_p,a_{p+1}$. Let $F$ be a free module of rank $n$ with basis $e_1,\dots,e_n$ and let $K$ be the submodule of $F$ generated by $x_1,\dots,x_m$ where these elements are defined in the following way: if $j= pq+r$ where $1\leq r\leq  p$, $x_j=a_re_{q+1}$ if $r\ne 1$ or $q=0$, and $x_j=a_{p+1}e_q+a_1e_{q+1}$ else; when $m=pn+1$, $x_m=a_{p+1}e_n$. We put $W_{p,n,m}=F/K$. 
 We can say that $W_{p,n,m}$ is named by the following $n\times m$ matrix, where $r=m-p(n-1)$:

\bigskip

\(\left( \begin{matrix}
a_1 &  .. & a_p & a_{p+1} & 0 & \hdotsfor[0.7]{4}\\
0 & .. & 0 & a_1 & a_2 & \dots &  a_{p+1} & 0  & .. \\
\vdots & & & & & & &  & \ddots\\
0 & \hdotsfor[0.7]{7} & \\
0 & \hdotsfor[0.7]{8} 
\end{matrix}
\begin{matrix}
 \hdotsfor[0.7]{7} & 0 \\
 \hdotsfor[0.7]{7}& 0 \\
& & & & & &  &  \vdots \\
 0 & a_1  & .. & a_p & a_{p+1} & 0 & .. & 0 \\
\hdotsfor[0.7]{3} & 0 & a_1 & a_2 & .. & a_r
\end{matrix}\right) 
\)

\bigskip

Since $K\subseteq PF$, $\mathrm{gen}\ W_{p,n,m}=n$. Now we consider the following relation: $\sum_{j=0}^{m}c_jx_j=0$. From the definition of the $x_j$ we get the following equality:
\[\sum_{q=0}^{n-2}\left( \sum_{i=1}^{p+1}c_{pq+i}a_i\right) e_{q+1}+\left( \sum_{i=1}^{r}c_{p(n-1)+i}a_i\right) e_n=0.\]
Since $\{e_1,\dots,e_n\}$ is a basis and $\mathrm{gen}\ A=p+1$ we deduce that $c_j\in P,\ \forall j,\ 1\leq j\leq m$. So, $\mathrm{rel}\ W_{p,n,m}=m$.

Let $s\in \mathrm{End}_R(W_{p,n,m})$. Then $s$ is
induced by an endomorphism $\tilde{s}$ of $F$ which satisfies
$\tilde{s}(K)\subseteq K$. For each $j,\ 1\leq j\leq n,$ there exists a family $(\alpha_{i,j})$ of elements of $R$ such that:
\begin{equation} \label{eq:s1}
\tilde{s}(e_j)=\sum_{i=1}^{n}\alpha_{i,j}e_i
\end{equation}
Since $\tilde{s}(K)\subseteq K,\ \forall j, 1\leq j\leq  m,\ \exists$ a
 family $(\beta_{i,j})$ of elements of $R$ such that:
\begin{equation} \label{eq:s2}
\tilde{s}(x_j)=\sum_{i=1}^{m}\beta_{i,j}x_i
\end{equation}
From (\ref{eq:s1}), (\ref{eq:s2}) and the equality $x_1=a_1e_1$
if follows that:

\[\sum_{q=1}^{n}\alpha_{q,1}a_1e_q=\sum_{q=0}^{n-2}\left( \sum_{i=1}^{p+1}\beta_{pq+i,1}a_i\right) e_{q+1}+\left( \sum_{i=1}^{r}\beta_{p(n-1)+i,1}a_{i}\right) e_n.\]

Then,  we get:
\[\forall q,\ 1\leq q\leq n-1,\qquad \alpha_{q,1}a_1=\sum_{i=1}^{p+1}\beta_{p(q-1)+i,1}a_i\]
\[\mathrm{and}\qquad\alpha_{n,1}a_1=\sum_{i=1}^{r}\beta_{p(n-1)+i,1}a_{i}.\]
We deduce that: $\forall q,\ 2\leq q\leq n,\ \beta_{p(q-2)+p+1,1}\in P$ and $\beta_{p(q-1)+1,1}\equiv \alpha_{q,1}\ [P]$. So, 
\begin{equation}\label{eq:s3}
\forall q,\ 2\leq q\leq n,\ \alpha_{q,1}\in P.
\end{equation}

Now, let $j=p\ell+1$ where $1\leq \ell\leq (n-1)$. In this case, $x_j=a_{p+1}e_{\ell}+a_1e_{\ell+1}$. From (\ref{eq:s1}) and (\ref{eq:s2}) it follows that:
\[\sum_{q=1}^{n}(\alpha_{q,\ell}a_{p+1}+\alpha_{q,\ell+1}a_1)e_q=\sum_{q=0}^{n-2}\left( \sum_{i=1}^{p+1}\beta_{pq+i,j}a_i\right) e_{q+1}+\left( \sum_{i=1}^{r}\beta_{p(n-1)+i,j}a_{i}\right) e_n.\]
Then,  we get:
\[\forall q,\ 1\leq q\leq n-1,\qquad \alpha_{q,\ell}a_{p+1}+\alpha_{q,\ell+1}a_1=\sum_{i=1}^{p+1}\beta_{p(q-1)+i,j}a_i\]
\[\mathrm{and}\qquad\alpha_{n,\ell}a_{p+1}+\alpha_{n,\ell+1}a_1=\sum_{i=1}^{r}\beta_{p(n-1)+i,j}a_{i}.\]
We deduce that \[\forall q,\ell,\ 1\leq q,\ell\leq (n-1),\ \alpha_{q,\ell}\equiv \beta_{p(q-1)+p+1,j}\ [P]\ \mathrm{and}\ \alpha_{q+1,\ell+1}\equiv \beta_{pq+1,j}\ [P],\] whence $\alpha_{q,\ell}\equiv\alpha_{q+1,\ell+1}\ [P]$. Consequently, $\forall q,\ 1\leq q\leq n,\ \alpha_{q,q}\equiv \alpha_{1,1}\ [P]$ and $\forall t,\ 1\leq t\leq (n-1),\ \forall q,\ 1\leq q\leq (n-t),\ \alpha_{q+t,q}\equiv \alpha_{1+t,1}\equiv 0\ [P]$ by (\ref{eq:s3}). Let $\bar{s}$ be the endomorphism of $W_{p,n,m}/PW_{p,n,m}$ induced by $s$. If $\alpha_{1,1}$ is a unit then $\bar{s}$ is an isomorphism, else $\overline{\mathbf{1}_{W_{p,n,m}}-s}$ is an isomorphism. By Lemma~\ref{L:iso} we conclude that either $s$  or $(\mathbf{1}_{W_{p,n,m}}-s)$ is an isomorphism.  Hence, $\mathrm{End}_R(W_{p,n,m})$ is local. \end{proof}

\begin{remark}
Observe that $\mathrm{D}(W_{1,n-1,n})$ is isomorphic to the indecomposable module built  in the proof of \cite[Theorem 2]{War70}.
\end{remark}

\begin{theorem}
 \label{T:ComPur} Let $R$ be a commutative ring. The following assertions hold:
\begin{enumerate}
\item Assume that, for any integer $p>0$, there exists a maximal ideal $P$ and a  finitely generated ideal $A$ of $R_P$ such that $\mathrm{gen}_{R_P}\ A\geq p+1$. Then, if $(n,m)$ and $(r,s)$ are two different pairs of integers, the $(n,m)$-purity and the $(r,s)$-purity are not equivalent.
\item Assume that, there exists an integer $p>0$ such that, for each maximal ideal $P$, for any finitely generated ideal $A$ of $R_P$, $\mathrm{gen}_{R_P}\ A\leq p$. Then:
\begin{enumerate}
\item for each integer $n>0$ the $(\aleph_0,n)$-purity (respectively $(n,\aleph_0)$-purity) is equivalent to the $(np,n)$-purity (respectively $(n,np)$-purity);
\item if $p>1$, then, for each integer $n>0$, for each integer $m,\ 1\leq m\leq n(p-1)$, the $(n,m)$-purity (respectively $(m,n)$-purity) is not equivalent to the $(n,m+1)$-purity (respectively $(m+1,n)$-purity).
\end{enumerate}
\end{enumerate}
\end{theorem}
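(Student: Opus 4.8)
The plan is to treat (2)(a) by localization and the non-equivalence statements (1) and (2)(b) by a pure-projectivity argument carried out over a suitable localization of $R$. For (2)(a): given a $(n,np)$-pure exact sequence $(\Sigma)$ of $R$-modules, Proposition~\ref{P:PureLocal} reduces the problem to each $R_P$, where $(\Sigma)_P$ is $(n,np)$-pure; since every finitely generated ideal of $R_P$ is $p$-generated, Proposition~\ref{P:genIdeal}(1) applied to $R_P$ upgrades this to $(\Sigma)_P$ being $(n,\aleph_0)$-pure, and Proposition~\ref{P:PureLocal} again (one $m$ at a time) yields that $(\Sigma)$ is $(n,m)$-pure over $R$ for every $m$. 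The converse being trivial, the $(n,\aleph_0)$- and $(n,np)$-purities agree; the left-handed version is the same argument using Proposition~\ref{P:genIdeal}(2) and commutativity.

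For (1) and (2)(b) I would first prove a \emph{descent lemma}: if $(\Sigma)$ is a short exact sequence of $R_P$-modules ($P$ maximal), then $(\Sigma)$ is $(n,m)$-pure over $R$ iff it is $(n,m)$-pure over $R_P$. By Theorem~\ref{T:pure}(3) only ``$\Leftarrow$'' needs proof, and there one clears denominators: multiplying each equation of a system over $A$ with coefficients in $R_P$ by a suitable element of $R\setminus P$ turns it into an equivalent system over $R$, which is solvable in $A$ as soon as the original is solvable in $B$. So it suffices to build the separating sequences over a commutative local ring, and the engine for that is a \emph{characterization lemma}: if $R$ is commutative local and $U$ is a nonzero finitely presented $R$-module with local endomorphism ring and no projective summand, then $U$ is $(n,m)$-pure-projective iff $\mathrm{gen}\ U\le m$ and $\mathrm{rel}\ U\le n$. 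For ``$\Leftarrow$'', the module $U\oplus R^{\,m-\mathrm{gen}\,U}$ is $(m,n)$-presented, hence $(n,m)$-pure-projective by Proposition~\ref{P:PureProj}(2), so $U$ is too. For ``$\Rightarrow$'', $U$ has the exchange property (it has local endomorphism ring) and is indecomposable, so being a summand of a direct sum of $(m,n)$-presented modules it is a summand of a single such module $N=U\oplus V$; since $\mathrm{gen}\ N\le m$ and $\mathrm{rel}\ N\le n-m+\mathrm{gen}\ N\ (\le n)$ — the meaning of ``$(m,n)$-presented'', via a Schanuel argument over a local ring — and $\mathrm{gen},\mathrm{rel}$ are additive over direct sums (cf.\ Proposition~\ref{P:AusBrid}(4)), one gets $\mathrm{gen}\ U\le m$ and $\mathrm{rel}\ U\le n$.

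Granting the characterization lemma, a sequence that is $(c,d)$-pure but not $(c',d')$-pure is obtained thus: using Proposition~\ref{P:locEnd} together with the Auslander--Bridger duality of Proposition~\ref{P:AusBrid}(2),(5), choose $U$ as in the lemma with $\mathrm{gen}\ U,\ \mathrm{rel}\ U$ prescribed so that $U$ is $(c',d')$-pure-projective but not $(c,d)$-pure-projective, i.e.\ $\mathrm{gen}\ U\le d'$, $\mathrm{rel}\ U\le c'$, and $\mathrm{gen}\ U>d$ or $\mathrm{rel}\ U>c$; by Proposition~\ref{P:PureProj}(1) there is a $(c,d)$-pure exact sequence $0\to K\to F\to U\to 0$ with $F$ a direct sum of $(d,c)$-presented modules; were it also $(c',d')$-pure it would split (as $U$ is $(c',d')$-pure-projective), making $U$ a summand of $F$, hence $(c,d)$-pure-projective — contradiction. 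The descent lemma transfers this to $R$. For (1), cyclic modules and their duals suffice: localize at a maximal ideal $P$ with $R_P$ carrying an ideal $B$ with as many generators as needed (this is where the hypothesis of (1) enters); if $(n,m)$ and $(r,s)$ differ in the first coordinate, say $n<r$, take $U=R_P/B$ with $\mathrm{gen}\ B=n+1$ (so $(\mathrm{gen}\ U,\mathrm{rel}\ U)=(1,n+1)$), which is $(r,s)$- but not $(n,m)$-pure-projective; if they differ only in the second, say $m<s$, take $U=\mathrm{D}(R_P/B)$ with $\mathrm{gen}\ B=m+1$ (so $(\mathrm{gen}\ U,\mathrm{rel}\ U)=(m+1,1)$), again $(r,s)$- but not $(n,m)$-pure-projective.

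For (2)(b), let $p$ be the least such bound, so that some $R_{P_0}$ has an ideal which is exactly $p$-generated; passing to subsets of a minimal generating set, $R_{P_0}$ then has ideals with every number of generators up to $p$, so Proposition~\ref{P:locEnd} is available with any parameter $p_0\in\{1,\dots,p-1\}$. To separate $(n,m)$ from $(n,m+1)$ for $1\le m\le n(p-1)$, one needs $U$ with $\mathrm{gen}\ U=m+1$ and $\mathrm{rel}\ U\le n$: take $p_0=\lceil m/n\rceil$ (which lies in $\{1,\dots,p-1\}$ since $1\le m\le n(p-1)$) and $\rho=\lceil m/p_0\rceil$ (which lies in $\{1,\dots,n\}$ and satisfies $(\rho-1)p_0+1\le m+1\le \rho p_0+1$), and put $U=\mathrm{D}(W_{p_0,\rho,m+1})$, of type $(\mathrm{gen},\mathrm{rel})=(m+1,\rho)$; the case $(m,n)$ versus $(m+1,n)$ is symmetric, with $U=W_{p_0,\rho,m+1}$ itself. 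The main obstacle I anticipate is the ``$\Rightarrow$'' half of the characterization lemma — performing the exchange-property reduction cleanly and honestly tracking the free summands that get absorbed into the $(m,n)$-presented pieces — with the descent lemma and the elementary arithmetic of (2)(b) (checking the prescribed $(\mathrm{gen},\mathrm{rel})$ lies in the interval $(a-1)p_0+1\le b\le ap_0+1$ permitted by Proposition~\ref{P:locEnd}) as the lesser technical points.
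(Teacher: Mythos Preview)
Your strategy is correct and closely parallels the paper's. Both reduce to the local case and separate purities by exhibiting an indecomposable finitely presented module $U$ with local endomorphism ring that is pure-projective for one purity but not the other; the heart of the argument in each case is the exchange-property reduction to a summand of a \emph{single} $(m,n)$-presented module. The technical point you flag is exactly where the paper invests its effort: after exchange gives $U\oplus(\oplus_i G_i)\cong(\oplus_i H_i)\oplus(\oplus_i G_i)$ with $F_i=G_i\oplus H_i$, one cancels $G=\oplus_i G_i$ via Evans's theorem (\cite[Corollary~4.6]{Fac98}), legitimate because $\mathrm{End}_R(G)$ is semilocal for $G$ finitely generated over a commutative local ring (\cite[Proposition~V.7.1]{FuSa01}); this yields $U\cong\oplus_i H_i$, hence $U\cong H_{i_0}$ is a summand of one $F_{i_0}$, and additivity of $\mathrm{gen}$ and $\mathrm{rel}$ finishes your characterization lemma.

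Two differences are worth noting. First, your descent lemma and characterization lemma package cleanly what the paper leaves implicit in the phrase ``we may assume that $R$ is local'' and in its ad hoc contradiction. Second, for part~(1) you use only cyclic modules $R_P/B$ and their Auslander--Bridger duals (with $\mathrm{End}(R_P/B)\cong R_P/B$ local), whereas the paper always invokes the more elaborate $W_{p,q,r}$ of Proposition~\ref{P:locEnd}; your choice is simpler and entirely adequate, since the hypothesis of~(1) supplies ideals of arbitrarily large $\mathrm{gen}$ and subsets of a minimal generating set over a local ring yield ideals of every intermediate $\mathrm{gen}$. For~(2)(b) both arguments genuinely need Proposition~\ref{P:locEnd}; your parametrization via $p_0=\lceil m/n\rceil$, $\rho=\lceil m/p_0\rceil$ is equivalent to the paper's $m+1=(q-1)(p-1)+t$ (the paper works with $W_{p-1,q,m+1}$, you with its dual), and your observation that one must take $p$ minimal so that some $R_{P_0}$ realizes an ideal with exactly $p$ generators is a point the paper passes over in silence.
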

\begin{proof} By Proposition~\ref{P:PureLocal} we may assume that $R$ is local with maximal $P$. By Theorem~\ref{T:pure} the $(n,m)$-purity and the $(r,s)$-purity are equivalent if and only if so are the $(m,n)$-purity and the $(s,r)$-purity.
 
(1). Suppose that $r>n$ and let $t=\min(m,s)$. Let $q$ be the greatest divisor of $(r-1)$ which is $\leq t$ and $p=(r-1)/q$. Let $A$ be a finitely generated ideal such that $\mathrm{gen}\ A>p$. By way of contradiction, suppose that $W_{p,q,r}$ is $(n,m)$-pure-projective. By Proposition~\ref{P:PureProj} $W_{p,q,r}$ is a summand of $\oplus_{i\in I}F_i$ where $I$ is a finite set and $\forall i\in I,\ F_i$ is a $(m,n)$-presented $R$-module. Since its  endomorphism ring is local, $W_{p,q,r}$ is an exchange module (see \cite[Theorem 2.8]{Fac98}). So, we have $W_{p,q,r}\oplus(\oplus_{i\in I}G_i)\cong(\oplus_{i\in I}H_i)\oplus(\oplus_{i\in I}G_i)$ where $\forall i\in I$, $G_i$ and $H_i$ are submodules of $F_i$ and $F_i=G_i\oplus H_i$. Let $G=\oplus_{i\in I}G_i$. Then $G$ is finitely generated. By \cite[Proposition V.7.1]{FuSa01} $\mathrm{End}_R(G)$ is semilocal. By using Evans's theorem (\cite[Corollary 4.6]{Fac98}) we deduce that $W_{p,q,r}\cong(\oplus_{i\in I}H_i)$. Since $W_{p,q,r}$ is indecomposable, we get that it is $(m,n)$-presented. This contradicts that $\mathrm{rel}\ W_{p,q,r}=r>n$.

(2)(a) is an immediate consequence of Proposition~\ref{P:genIdeal}.

(2)(b). There exist two integers $q,t$ such that $m+1=(q-1)(p-1)+t$ with $n\geq q\geq 1$ and $1\leq t\leq p$. As in (1) we prove that $W_{p-1,q,m+1}$ is not $(m,n)$-pure-projective. 
\end{proof}

\begin{remark}
In the previous theorem, when there exists an integer $p>1$ such that, for any finitely generated ideal $A$ $\mathrm{gen}\ A\leq p$, we don't know if the $(n,m)$-purity and the  $(n,m+1)$-purity are equivalent when $n(p-1)+1\leq m\leq np-1$. If $R$ is a local ring with maximal $P$ with residue field $k$ such that $P^2=0$ and $\dim_kP=p$ it is easy to show that each finitely presented $R$-module $F$ with $\mathrm{gen}\ F=n$ and $\mathrm{rel}\ F=np$ is semisimple. So, the $(np,n)$-purity is equivalent to the $(np-1,n)$-purity.
\end{remark}

\section{$(n,m)$-flat modules and $(n,m)$-injective modules}
\label{S:flat}
Let $M$ be a right $R$-module. We say that $M$ is $(n,m)${\it -flat} if for any $m$-generated submodule $K$ of a $n$-generated free left $R$-module $F$, the natural map: $M\otimes_RK\rightarrow M\otimes_RF$ is a monomorphism. We say that $M$ is $(\aleph_0,m)${\it -flat} (respectively $(n,\aleph_0)${\it -flat}) if $M$ is $(n,m)$-flat for each integer $n>0$ (respectively $m>0$). We say that $M$ is $(n,m)${\it -injective} if for any $m$-generated submodule $K$ of a $n$-generated free right $R$-module $F$, the natural map: $\mathrm{Hom}_R(F,M)\rightarrow \mathrm{Hom}_R(K,M)$ is an epimorphism. We say that $M$ is $(\aleph_0,m)${\it -injective} (respectively $(n,\aleph_0)${\it -injective}) if $M$ is $(n,m)$-injective for each integer $n>0$ (respectively $m>0$). A ring $R$ is called {\it left self $(n,m)$-injective} if $R$ is $(n,m)$-injective as left $R$-module.

If $R$ is a commutative domain, then an $R$-module is $(1,1)$-flat (respectively $(1,1)$-injective) if and only if it is torsion-free (respectively divisible).

The following propositions can be proved with standard technique: see \cite[Theorem 4.3 and Proposition 2.3]{ZhChZh05}. In these propositions the integers $n$ or $m$ can be replaced with $\aleph_0$.
\begin{proposition}
\label{P:flat} Assume that $R$ is an algebra over a commutative ring $S$ and let $E$ be an injective $S$-cogenerator. Let $M$ be a right $R$-module. The following conditions are equivalent:
\begin{enumerate}
\item $M$ is $(n,m)$-flat;
\item each exact sequence $0\rightarrow L\rightarrow N \rightarrow M\rightarrow 0$ is $(n,m)$-pure, where $L$ and $N$ are right $R$-modules;
\item for each $(m,n)$-presented right module $F$, every homomorphism $f:F\rightarrow M$ factors through a free right $R$-module;
\item $\mathrm{Hom}_S(M,E)$ is a $(n,m)$-injective left $R$-module.
\end{enumerate}
\end{proposition}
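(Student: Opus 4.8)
The plan is to prove the equivalences $(1)\Leftrightarrow(2)$, $(2)\Leftrightarrow(3)$ and $(1)\Leftrightarrow(4)$ separately, following the classical pattern for flat modules. Throughout I would use Theorem~\ref{T:pure} in its right-module form, obtained from the stated version by the left/right symmetry of the definitions: a short exact sequence of right modules is $(n,m)$-pure iff it stays exact after tensoring with every $(n,m)$-presented left module, equivalently iff $\mathrm{Hom}_R(G,-)$ stays exact on it for every $(m,n)$-presented right module $G$.

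For $(1)\Leftrightarrow(2)$ I would first record that an $n$-generated free left module $F\cong R^n$ together with an $m$-generated submodule $K$ produces an $(n,m)$-presented left module $F/K$, that every $(n,m)$-presented left module arises this way, and that flatness of $F$ together with the long exact $\mathrm{Tor}$-sequence of $0\to K\to F\to F/K\to 0$ identifies $\ker(M\otimes_R K\to M\otimes_R F)$ with $\mathrm{Tor}^R_1(M,F/K)$. Hence $M$ is $(n,m)$-flat iff $\mathrm{Tor}^R_1(M,G)=0$ for every $(n,m)$-presented left module $G$. Now for any exact sequence $0\to L\to N\to M\to 0$ of right modules and any such $G$, the associated $\mathrm{Tor}$-sequence $\mathrm{Tor}^R_1(M,G)\to L\otimes_R G\to N\otimes_R G$ is exact, so $L\otimes_R G\to N\otimes_R G$ is injective exactly when the connecting map $\mathrm{Tor}^R_1(M,G)\to L\otimes_R G$ is zero. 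If $M$ is $(n,m)$-flat this holds for all $G$, giving $(1)\Rightarrow(2)$; conversely, applying $(2)$ to a presentation $0\to L\to N\to M\to 0$ with $N$ free forces $\mathrm{Tor}^R_1(N,G)=0$, so $\mathrm{Tor}^R_1(M,G)$ injects into $L\otimes_R G$ and must vanish, giving $(2)\Rightarrow(1)$.

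For $(2)\Leftrightarrow(3)$, by the right-module form of Theorem~\ref{T:pure} a short exact sequence $0\to L\to N\xrightarrow{p}M\to 0$ of right modules is $(n,m)$-pure iff $\mathrm{Hom}_R(G,N)\to\mathrm{Hom}_R(G,M)$ is onto for every $(m,n)$-presented right module $G$. Assuming $(3)$, a map $g\colon G\to M$ with $G$ $(m,n)$-presented factors as $G\to R^t\to M$ through a finite free module; since $R^t$ is projective the second map lifts along $p$, and composition lifts $g$ to $N$, so the sequence is $(n,m)$-pure: this is $(3)\Rightarrow(2)$. For $(2)\Rightarrow(3)$ I would apply $(2)$ to a free presentation $0\to L\to R^{(\Lambda)}\xrightarrow{p}M\to 0$: any $g\colon G\to M$ with $G$ $(m,n)$-presented then lifts to $\tilde g\colon G\to R^{(\Lambda)}$, and since $G$ is finitely generated, $\tilde g(G)$ lies inside a finitely generated free direct summand $R^t$ of $R^{(\Lambda)}$, through which $g$ consequently factors.

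Finally, for $(1)\Leftrightarrow(4)$ I would use the tensor-hom adjunction $\mathrm{Hom}_R(X,\mathrm{Hom}_S(M,E))\cong\mathrm{Hom}_S(M\otimes_R X,E)$, natural in the left module $X$. Applied to an $m$-generated submodule $K$ of an $n$-generated free left module $F$, it turns $\mathrm{Hom}_R(F,\mathrm{Hom}_S(M,E))\to\mathrm{Hom}_R(K,\mathrm{Hom}_S(M,E))$ into $\mathrm{Hom}_S(-,E)$ applied to $M\otimes_R K\to M\otimes_R F$. Because $E$ is an injective $S$-cogenerator, $\mathrm{Hom}_S(-,E)$ is exact and $\mathrm{Hom}_S(f,E)$ is surjective iff $f$ is injective; hence the former map is onto for all such $K\subseteq F$ iff $M\otimes_R K\to M\otimes_R F$ is injective for all such $K\subseteq F$, i.e. iff $M$ is $(n,m)$-flat, which is exactly the assertion that $\mathrm{Hom}_S(M,E)$ is $(n,m)$-injective as a left $R$-module. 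The versions with $n$ or $m$ replaced by $\aleph_0$ follow formally by ranging over all finite ranks. The only point requiring care is the bookkeeping of sides — checking that Theorem~\ref{T:pure} and the adjunction are invoked on the correct (left versus right) module categories — but no genuinely new difficulty appears, which is why the statement can be settled by standard technique.
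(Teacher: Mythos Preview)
Your proof is correct and is precisely the ``standard technique'' the paper invokes in lieu of a written proof (it merely refers to \cite[Theorem 4.3 and Proposition 2.3]{ZhChZh05}). The three blocks $(1)\Leftrightarrow(2)$ via $\mathrm{Tor}^R_1$ against $(n,m)$-presented left modules, $(2)\Leftrightarrow(3)$ via the right-module version of Theorem~\ref{T:pure} and factoring through a finite free summand of a free presentation, and $(1)\Leftrightarrow(4)$ via the adjunction $\mathrm{Hom}_R(X,\mathrm{Hom}_S(M,E))\cong\mathrm{Hom}_S(M\otimes_RX,E)$ together with the fact that an injective cogenerator reflects monomorphisms, are exactly the classical arguments one expects here, and your side-bookkeeping is consistent with the paper's conventions.
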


\begin{proposition}
\label{P:inj} Let $M$ be a right module. The following conditions are equivalent:
\begin{enumerate}
\item $M$ is $(n,m)$-injective;
\item each exact sequence $0\rightarrow M\rightarrow L\rightarrow N\rightarrow 0$ is $(m,n)$-pure, where $L$ and $N$ are right $R$-modules;
\item $M$ is a $(m,n)$-pure  submodule of its injective hull.
\end{enumerate}
\end{proposition}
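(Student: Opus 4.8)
The plan is to prove the cycle $(1)\Rightarrow(2)\Rightarrow(3)\Rightarrow(1)$, the workhorse being the ``system of equations'' form of $(m,n)$-purity. Concretely, by the right-module analogue of Theorem~\ref{T:pure}(3), an exact sequence $0\rightarrow M\rightarrow L\rightarrow N\rightarrow 0$ of right $R$-modules is $(m,n)$-pure if and only if every linear system $\sum_{j=1}^{n}x_jr_{i,j}=a_i$ $(i=1,\dots,m)$, with $r_{i,j}\in R$ and $a_i\in M$, that has a solution $(x_j)$ in $L$ also has one in $M$. Thus the whole proposition is a dictionary between solving such systems and extending homomorphisms out of finitely generated submodules of finite-rank free modules.

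For $(1)\Rightarrow(2)$ I would start from such a system solvable in $L$, say by $x_j=\ell_j$, form the free module $F=R^n$ with basis $e_1,\dots,e_n$, let $K\subseteq F$ be generated by the elements $k_i=\sum_{j}e_jr_{i,j}$, and note that the map $\psi\colon F\rightarrow L$, $e_j\mapsto\ell_j$, carries $K$ into $M$ because $\psi(k_i)=a_i$. Then $\psi|_K\colon K\rightarrow M$ is a homomorphism from an $m$-generated submodule of the $n$-generated free module $F$, so $(n,m)$-injectivity extends it to $\theta\colon F\rightarrow M$, and $x_j:=\theta(e_j)\in M$ solve the system since $\sum_j x_jr_{i,j}=\theta(k_i)=a_i$. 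The implication $(2)\Rightarrow(3)$ is then immediate by applying $(2)$ to $0\rightarrow M\rightarrow E(M)\rightarrow E(M)/M\rightarrow 0$, where $E(M)$ is the injective hull of $M$.

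For $(3)\Rightarrow(1)$ I would run the previous argument backwards: given an $m$-generated $K=\sum_i k_iR\subseteq F=R^n$ with $k_i=\sum_j e_jr_{i,j}$, and a homomorphism $\phi\colon K\rightarrow M$, use injectivity of $E(M)$ to extend $\phi$ to $\Phi\colon F\rightarrow E(M)$; then $x_j:=\Phi(e_j)$ solve the system $\sum_j x_jr_{i,j}=a_i$ (with $a_i:=\phi(k_i)\in M$) in $E(M)$, so by $(m,n)$-purity of $M$ in $E(M)$ there are $y_j\in M$ solving it, and $\theta\colon F\rightarrow M$, $e_j\mapsto y_j$, extends $\phi$ since $\theta(k_i)=a_i=\phi(k_i)$ for all $i$. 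Lastly, the $\aleph_0$-versions fall out for free: $(\aleph_0,m)$-injectivity (resp.\ $(n,\aleph_0)$-injectivity) is by definition the conjunction over all $n$ (resp.\ all $m$) of $(n,m)$-injectivity, and likewise $(m,\aleph_0)$-purity and $(\aleph_0,n)$-purity are the matching conjunctions, so one simply intersects the equivalences already obtained.

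I do not expect a genuine obstacle here — this is ``standard technique'' as the paper says — but the one place to be careful is the index bookkeeping: one must confirm that an $m$-generated submodule of an $n$-generated free module on the injectivity side really matches $m$ equations in $n$ unknowns on the purity side, and that the left/right transposition of Theorem~\ref{T:pure} is applied with $n$ and $m$ interchanged in the correct order (which is exactly why $(n,m)$-injectivity corresponds to $(m,n)$-purity and not to $(n,m)$-purity).
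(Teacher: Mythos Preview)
Your argument is correct and is precisely the ``standard technique'' the paper invokes without writing out: the paper gives no proof of Proposition~\ref{P:inj} beyond a pointer to \cite[Proposition~2.3]{ZhChZh05}, and what you have written is exactly that dictionary between extending maps $K\to M$ with $K$ an $m$-generated submodule of $R^n$ and solving $m$ right-linear equations in $n$ unknowns, via Theorem~\ref{T:pure}(3) on the right. Your index bookkeeping is right, and the final remark on the $\aleph_0$ cases is the correct way to handle them.
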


\begin{proposition}
\label{P:localflat} Let $R$ be a commutative ring. Then an $R$-module $M$ is $(n,m)$-flat if and only if, for each maximal ideal $P$, $M_P$ is $(n,m)$-flat over $R_P$.
\end{proposition}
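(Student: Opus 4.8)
The plan is to deduce this local–global statement from the two results already at hand: Proposition~\ref{P:flat}, which links $(n,m)$-flatness of a right module $M$ to $(n,m)$-purity of a short exact sequence $0\to L\to F\to M\to 0$ with $F$ free, and Proposition~\ref{P:PureLocal}, which says that $(n,m)$-purity of a short exact sequence over a commutative ring is a local property. So I would fix such a presentation $0\to L\to F\to M\to 0$ with $F$ a free right $R$-module and run the chain of equivalences: $M$ is $(n,m)$-flat over $R$ $\iff$ this sequence is $(n,m)$-pure over $R$ (Proposition~\ref{P:flat}) $\iff$ for each maximal ideal $P$ the localized sequence $0\to L_P\to F_P\to M_P\to 0$ is $(n,m)$-pure over $R_P$ (Proposition~\ref{P:PureLocal}) $\iff$ for each maximal ideal $P$ the module $M_P$ is $(n,m)$-flat over $R_P$, the last step being Proposition~\ref{P:flat} applied over $R_P$, since $F_P$ is a free $R_P$-module of the same finite rank as $F$.

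The only delicate point is that Proposition~\ref{P:flat} is phrased with ``\emph{every} short exact sequence $0\to L\to N\to M\to 0$ is $(n,m)$-pure'', while the argument above uses a single sequence with $N$ free. This is harmless: if one such sequence $0\to L\to F\to M\to 0$ with $F$ free is $(n,m)$-pure, then by the Hom-form of $(n,m)$-purity every homomorphism from an $(m,n)$-presented right module into $M$ lifts to $F$, hence factors through a free module, so $M$ is $(n,m)$-flat by the factorization criterion (3) of Proposition~\ref{P:flat}; the same remark used over $R_P$ closes the last equivalence. No genuine obstacle arises here.

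Alternatively, one can argue straight from the definition using that localization at a prime is exact and commutes with tensor products, i.e. $(M\otimes_R N)_P\cong M_P\otimes_{R_P}N_P$ naturally. For the ``if'' direction, given an $m$-generated submodule $K$ of $R^n$, let $L$ be the kernel of $M\otimes_R K\to M\otimes_R R^n$; then $L_P$ is the kernel of $M_P\otimes_{R_P}K_P\to M_P\otimes_{R_P}R_P^{\,n}$, where $K_P$ is the $m$-generated $R_P$-submodule of $R_P^{\,n}$ spanned by the images of the generators of $K$, and this kernel vanishes because $M_P$ is $(n,m)$-flat; as $L_P=0$ for every maximal $P$, we get $L=0$. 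For the ``only if'' direction, given an $m$-generated submodule $K$ of a free $R_P$-module of rank $n$, one clears a common denominator $s\notin P$ in a choice of $m$ generators to produce an $m$-generated submodule $K'$ of $R^n$ with $K'_P=K$ inside $R_P^{\,n}$ (using exactness of localization on $K'\hookrightarrow R^n$); localizing the monomorphism $M\otimes_R K'\to M\otimes_R R^n$ at $P$ then yields the required monomorphism over $R_P$. In either approach the only thing needing a little care is the compatibility of ``$m$-generated'' with localization — immediate in the ``if'' direction and settled by the denominator-clearing remark in the ``only if'' direction — so I expect no real difficulty.
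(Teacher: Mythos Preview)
The paper does not supply a proof of this proposition; it is stated without argument (the remark about ``standard technique'' and the reference to \cite{ZhChZh05} seems to cover only Propositions~\ref{P:flat} and~\ref{P:inj}). Both of your approaches are correct and either would serve as a proof.

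Your first approach is particularly well suited to the paper, since it reduces the statement cleanly to Proposition~\ref{P:PureLocal} via Proposition~\ref{P:flat}. One small correction: you write that $F_P$ is free ``of the same finite rank as $F$'', but $M$ need not be finitely generated, so $F$ may have infinite rank. This does not affect the argument, since the localization of any free $R$-module is a free $R_P$-module, and your observation that a single $(n,m)$-pure presentation with free middle term forces $(n,m)$-flatness via criterion~(3) of Proposition~\ref{P:flat} goes through verbatim.

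Your second, direct approach is also fine and is essentially the content of the forward half of the proof of Proposition~\ref{P:PureLocal} combined with the standard local--global principle $L=0\Leftrightarrow L_P=0$ for all maximal $P$. The denominator-clearing step for the ``only if'' direction is exactly the point the paper makes in the first paragraph of the proof of Proposition~\ref{P:PureLocal} (``there exists a $(n,m)$-presented $R$-module $M'$ such that $M\cong M'_P$''), so there is no hidden difficulty.
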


\begin{lemma}
\label{L:p-gen} Let $M$ be $p$-generated right $R$-module where $p$ is a positive integer. Then $M$ is flat if and only if it is $(1,p)$-flat.
\end{lemma}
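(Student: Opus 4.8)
The plan is the following. The implication ``flat $\Rightarrow (1,p)$-flat'' is immediate: if $M$ is flat and $K$ is any submodule of the free left module $R$ (in particular any $p$-generated left ideal), then tensoring $0\to K\to R$ with $M$ keeps it left exact, so $M\otimes_R K\to M$ is a monomorphism. For the converse I would use the equational criterion for flatness: a right $R$-module $M$ is flat if and only if, for every relation $\sum_{i=1}^n m_ir_i=0$ with $m_i\in M$ and $r_i\in R$, there exist $k\geq 1$, elements $v_1,\dots,v_k\in M$ and $b_{li}\in R$ ($1\le l\le k$, $1\le i\le n$) with $m_i=\sum_{l=1}^k v_lb_{li}$ for all $i$ and $\sum_{i=1}^n b_{li}r_i=0$ for all $l$.

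So assume $M$ is $(1,p)$-flat, fix generators $u_1,\dots,u_p$ of $M$, and take an arbitrary relation $\sum_{i=1}^n m_ir_i=0$. Writing each $m_i=\sum_{j=1}^p u_js_{ji}$ with $s_{ji}\in R$ and putting $t_j=\sum_{i=1}^n s_{ji}r_i\in R$, one obtains $\sum_{j=1}^p u_jt_j=0$, a relation with only $p$ terms. The key step is that $(1,p)$-flatness trivializes this short relation. Indeed, the homomorphism from the $(p,1)$-presented right module $R^p/(t_1,\dots,t_p)R$ to $M$ sending the $j$-th standard basis vector to $u_j$ is well defined (since $\sum_j u_jt_j=0$), so by Proposition~\ref{P:flat} it factors through a free right module $R^k$; unwinding this factorization produces $v_1,\dots,v_k\in M$ and $c_{lj}\in R$ with $u_j=\sum_{l=1}^k v_lc_{lj}$ for all $j$ and $\sum_{j=1}^p c_{lj}t_j=0$ for all $l$. (Equivalently, this is the standard unwinding of the monomorphism $M\otimes_R(Rt_1+\dots+Rt_p)\hookrightarrow M$, i.e.\ of $\mathrm{Tor}_1^R(M,R/(Rt_1+\dots+Rt_p))=0$, through a free presentation of $R/(Rt_1+\dots+Rt_p)$.)

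It then suffices to transport the trivialization back. Setting $b_{li}=\sum_{j=1}^p c_{lj}s_{ji}$, substitution gives $m_i=\sum_j u_js_{ji}=\sum_j\big(\sum_l v_lc_{lj}\big)s_{ji}=\sum_l v_lb_{li}$ and $\sum_i b_{li}r_i=\sum_j c_{lj}\big(\sum_i s_{ji}r_i\big)=\sum_j c_{lj}t_j=0$, so the original relation is trivial and $M$ is flat. The only slightly delicate point is the passage between the definition of $(1,p)$-flatness (or $\mathrm{Tor}_1$-vanishing) and the explicit equational conclusion about the generators $u_j$, but this is routine and in any case supplied by Proposition~\ref{P:flat}; there is no genuine obstacle, the whole content being that $p$-generatedness lets one rewrite any relation over $M$, after a change of generators, as a relation with at most $p$ terms that $(1,p)$-flatness already controls.
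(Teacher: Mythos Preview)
Your proof is correct. The core idea is the same as the paper's: because $M$ has $p$ generators, any relation (equivalently, any tensor in $M\otimes_R A$) can be rewritten with at most $p$ terms, and $(1,p)$-flatness then handles it.

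The paper's execution is shorter. Instead of the equational criterion, it uses the ideal criterion: $M$ is flat iff $M\otimes_R A\to M$ is injective for every left ideal $A$. Given $z\in M\otimes_R A$ mapping to $0$, writing $z=\sum_{j=1}^p u_j\otimes a_j$ (possible exactly because $M$ is $p$-generated and $A$ is a left ideal) shows $z$ is the image of an element $z'\in M\otimes_R A'$ with $A'=\sum_j Ra_j$ $p$-generated; $(1,p)$-flatness gives $z'=0$, hence $z=0$. Your route through Proposition~\ref{P:flat}(3) and the explicit equational unwinding is a valid alternative, just more machinery for the same reduction.
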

\begin{proof}
Only ``if'' requires a proof. Let $A$ be a left ideal. Assume that $M$ is generated by $x_1,\dots,x_p$. So, if $z\in M\otimes_RA$, $z=\sum_{i=1}^px_i\otimes a_i$ where $a_1,\dots,a_p\in A$. Suppose that the image of $z$ in $M\otimes_RR$ is $0$. If $A'$ is the left ideal generated by $a_1,\dots,a_p$, if $z'$ is the element of $M\otimes_RA'$ defined by $z'=\sum_{i=1}^px_i\otimes a_i$, then $z$ (respectively $0$) is the image of $z'$ in $M\otimes_RA$ (respectively $M\otimes_RR$). Since $M$ is $(1,p)$-flat we successively deduce that $z'=0$ and $z=0$.
\end{proof}

\bigskip
It is well known that each $(1,\aleph_0)$-flat right module is $(\aleph_0,\aleph_0)$-flat. {\bf For each positive integer $p$, is each $(1,p)$-flat right module $(\aleph_0,p)$-flat?} 

The following theorem and Theorem~\ref{T:parfait} give a partial answer to this question.

\begin{theorem}
\label{T:p-flatideal} Let $p$ be a positive integer and let $R$ be a ring. For each positive integer $n$, assume that, for each $p$-generated submodule $G$ of the left $R$-module $R^n\oplus R$, $(G\cap R^n)$ is the direct limit of its $p$-generated submodules.Then a right $R$-module $M$ is $(1,p)$-flat if and only if it is $(\aleph_0,p)$-flat.
\end{theorem}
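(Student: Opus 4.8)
The plan is to prove only the nontrivial implication: every $(\aleph_0,p)$-flat module is a fortiori $(1,p)$-flat, so it suffices to show that a $(1,p)$-flat right $R$-module $M$ is $(n,p)$-flat for every $n>0$, which I would do by induction on $n$; the base case $n=1$ is the hypothesis. For the inductive step, suppose $M$ is $(n,p)$-flat and let $K$ be a $p$-generated submodule of $R^{n+1}=R^{n}\oplus R$. Let $\pi\colon R^{n}\oplus R\to R$ be the projection onto the last summand, so that $\ker\pi=R^{n}$, and set $K'=K\cap R^{n}=\ker(\pi|_{K})$. Then $0\to K'\to K\xrightarrow{\pi|_{K}}\pi(K)\to 0$ is exact, and $\pi(K)$ is a $p$-generated left ideal of $R$, being a homomorphic image of the $p$-generated module $K$.

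Next I would tensor this sequence with $M$ and compare it, via the inclusions $K'\subseteq R^{n}$, $K\subseteq R^{n+1}$, $\pi(K)\subseteq R$, with the split exact sequence $0\to R^{n}\to R^{n+1}\xrightarrow{\pi}R\to 0$ tensored with $M$. This yields a commutative diagram with exact rows whose vertical maps are the canonical maps $\alpha\colon M\otimes_{R}K'\to M\otimes_{R}R^{n}$, $\beta\colon M\otimes_{R}K\to M\otimes_{R}R^{n+1}$ and $\gamma\colon M\otimes_{R}\pi(K)\to M\otimes_{R}R$, and in which the bottom row is in fact short exact. The goal is that $\beta$ be injective. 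Here $\gamma$ is injective because $M$ is $(1,p)$-flat and $\pi(K)$ is a $p$-generated left ideal. The decisive point is that $\alpha$ is injective too: by hypothesis $K'=K\cap R^{n}$ is a direct limit $\varinjlim_{\lambda}K'_{\lambda}$ of $p$-generated submodules $K'_{\lambda}\subseteq R^{n}$; since $M$ is $(n,p)$-flat by the inductive hypothesis, each $M\otimes_{R}K'_{\lambda}\to M\otimes_{R}R^{n}$ is injective; and since $M\otimes_{R}-$ commutes with direct limits while filtered colimits are exact in a module category, $\alpha=\varinjlim_{\lambda}\bigl(M\otimes_{R}K'_{\lambda}\to M\otimes_{R}R^{n}\bigr)$ is injective. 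A routine diagram chase then concludes the step: given $z$ with $\beta(z)=0$, its image $\bar z$ in $M\otimes_{R}\pi(K)$ satisfies $\gamma(\bar z)=0$, so $\bar z=0$ and $z$ is the image of some $w\in M\otimes_{R}K'$; then $\alpha(w)$ maps to $\beta(z)=0$ in $M\otimes_{R}R^{n+1}$, so $\alpha(w)=0$ because $M\otimes_{R}R^{n}\to M\otimes_{R}R^{n+1}$ is (split) injective; hence $w=0$ by injectivity of $\alpha$, and therefore $z=0$. Thus $M$ is $(n+1,p)$-flat, completing the induction.

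The step I expect to be the real obstacle is precisely the injectivity of $\alpha$. The intersection $K\cap R^{n}$ of a $p$-generated submodule of $R^{n+1}$ with the free direct summand $R^{n}$ need not be $p$-generated, so $(n,p)$-flatness cannot be applied to it directly; the hypothesis of the theorem is exactly the device that repairs this, by writing $K\cap R^{n}$ as a filtered colimit of $p$-generated submodules of $R^{n}$, to each of which the inductive hypothesis does apply, and then invoking exactness of filtered colimits. All remaining ingredients — right exactness of $M\otimes_{R}-$, the splitting of $0\to R^{n}\to R^{n+1}\to R\to 0$, and the diagram chase — are standard.
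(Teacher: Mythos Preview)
Your proof is correct and follows essentially the same route as the paper: induction on $n$, the same short exact sequence $0\to K\cap R^{n}\to K\to\pi(K)\to 0$ compared with the split sequence for $R^{n+1}$, injectivity of $\gamma$ from $(1,p)$-flatness, and injectivity of $\alpha$ obtained by writing $K\cap R^{n}$ as a (filtered) direct limit of $p$-generated submodules and applying the inductive hypothesis together with commutation of tensor with direct limits. The paper merely asserts that the injectivity on each $p$-generated submodule ``follows'' to injectivity of $1_M\otimes v$ and that this yields injectivity of $1_M\otimes u$; you have made both of these steps explicit, but the argument is the same.
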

\begin{proof}
We shall prove that $M$ is $(n,p)$-flat by induction on $n$. Let $G$ be a $p$-generated submodule of the left $R$-module $R^{n+1}=R^n\oplus R$. Let $\pi$ be the projection of $R^{n+1}$ onto $R$ and $G'=\pi(G)$. Then $G'$ is a $p$-generated left module. We put $H=G\cap R^n$.
We have the following commutative diagram with exact horizontal sequences:
\[\begin{matrix}
{} & {} & M\otimes_RH &\rightarrow & M\otimes_RG &\xrightarrow{1_M\otimes\pi} & M\otimes_RG'& \rightarrow & 0\\
{} & {} & \downarrow & {} & \downarrow & {} & \downarrow & {} & {} \\
0 & \rightarrow & M\otimes_R R^n &\rightarrow & M\otimes_RR^{n+1} &\xrightarrow{1_M\otimes\pi} & M\otimes_RR& \rightarrow & 0
\end{matrix}\]
Let $u:G\rightarrow R^{n+1},\ u':G'\rightarrow R,\ w:R^n\rightarrow R^{n+1}$ be the inclusion maps and let $v=u\vert_{H}$. Then $(1_M\otimes u')$ is injective. Let $H'$ be a $p$-generated submodule of $H$. By the induction hypothesis  $M$ is $(n,p)$-flat. So, $(1_M\otimes(v\vert_{H'}))$ is injective. It follows that $(1_M\otimes v)$ is injective too. We conclude that $(1_M\otimes u)$ is injective and $M$ is $(\aleph_0,p)$-flat.
\end{proof}

\begin{corollary}
\label{C:p-flatideal} Let $p$ be a positive integer and let $R$ be a ring such that each left ideal is $(1,p)$-flat. Then, for each positive integer $q\leq p$, a right $R$-module $M$ is $(1,q)$-flat if and only if it is $(\aleph_0,q)$-flat.
\end{corollary}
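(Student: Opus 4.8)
The plan is to obtain this from Theorem~\ref{T:p-flatideal} after a trivial reduction. Since any $q$-generated module with $q\le p$ is in particular $p$-generated, the hypothesis that every left ideal is $(1,p)$-flat implies that every left ideal is $(1,q)$-flat, and a $(1,p)$-flat right module is automatically $(1,q)$-flat; so it suffices to treat the case $q=p$, i.e.\ to show that when every left ideal is $(1,p)$-flat, every $(1,p)$-flat right $R$-module is $(\aleph_0,p)$-flat (the reverse implication being clear). By Theorem~\ref{T:p-flatideal} it is therefore enough to verify the following: for every positive integer $n$ and every $p$-generated submodule $G$ of the left $R$-module $R^n\oplus R$, the submodule $G\cap R^n$ is the direct limit of its $p$-generated submodules.

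To check this, let $\pi$ be the projection of $R^n\oplus R$ onto $R$ and put $C=\pi(G)$. Then $C$ is a left ideal generated by the $\pi$-images of the generators of $G$, hence $p$-generated, and there is a short exact sequence $0\to G\cap R^n\to G\to C\to 0$. By hypothesis $C$ is $(1,p)$-flat, so, being $p$-generated, it is flat by the left-module analogue of Lemma~\ref{L:p-gen}. Hence $\mathrm{Tor}_1^R(N,C)=0$ for every right $R$-module $N$, and so this short exact sequence is pure.

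Now I would invoke purity in its equational form, Theorem~\ref{T:pure}(3). Fix finitely many $z_1,\dots,z_k\in G\cap R^n$, choose generators $g_1,\dots,g_p$ of $G$, and write $z_i=\sum_{j=1}^{p}a_{ij}g_j$ with $a_{ij}\in R$. The system $\sum_{j=1}^{p}a_{ij}X_j=z_i$ ($i=1,\dots,k$) has all its right-hand sides in $G\cap R^n$ and is solved in $G$ by $X_j=g_j$; purity of $0\to G\cap R^n\to G\to C\to 0$ (hence its $(k,p)$-purity) then provides a solution $w_1,\dots,w_p\in G\cap R^n$, whence $z_1,\dots,z_k$ all lie in the $p$-generated submodule $\sum_{j=1}^{p}Rw_j$ of $G\cap R^n$. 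Thus every finitely generated submodule of $G\cap R^n$ is contained in a $p$-generated one, so the $p$-generated submodules of $G\cap R^n$ form a directed family with union $G\cap R^n$, as required; Theorem~\ref{T:p-flatideal} then completes the proof.

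The heart of the argument is that $G\cap R^n$ is pure in the $p$-generated module $G$: this is exactly what allows a finite linear system over $G\cap R^n$ that is solvable in $G$ to be solved inside $G\cap R^n$ with only $p$ unknowns, which is precisely the bound required to apply Theorem~\ref{T:p-flatideal}. The remaining ingredients — the reduction to $q=p$ and the passage from ``$(1,p)$-flat and $p$-generated'' to ``flat'' via Lemma~\ref{L:p-gen} — are routine.
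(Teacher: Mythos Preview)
Your proof is correct and follows the same route as the paper: show that $\pi(G)$ is flat via Lemma~\ref{L:p-gen}, deduce that $G\cap R^n$ is pure in $G$, and then use the equational characterization of purity to trap any finite subset of $G\cap R^n$ in a $p$-generated submodule, so that Theorem~\ref{T:p-flatideal} applies. The only cosmetic difference is your preliminary reduction to $q=p$; the paper instead runs the identical purity argument directly for a $q$-generated $G$ (noting that $\pi(G)$, being $q$-generated with $q\le p$, is still $(1,p)$-flat and hence flat), which makes the reduction unnecessary.
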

\begin{proof}
Let the notations be as in the previous theorem. Since $G'$ is a flat left $R$-module by Lemma~\ref{L:p-gen}, $H$ is a pure submodule of $G$. Let $\{g_1,\dots,g_q\}$ be a spanning set of $G$ and let $h_1,\dots,h_t\in H$. For each $k$, $1\leq k\leq t$ there exist $a_{k,1},\dots,a_{k,q}\in R$ such that $h_k=\sum_{i=1}^qa_{k,i}g_i$. It follows that there exist $g'_1,\dots,g'_q\in H$ such that $\forall k,\ 1\leq k\leq t,\ h_k=\sum_{i=1}^qa_{k,i}g'_i$. So, each finitely generated submodule of $H$ is contained in a $q$-generated submodule. We conclude by applying Theorem~\ref{T:p-flatideal}.
\end{proof}

\begin{corollary} 
\label{C:OneAlepFlat} Let $R$ be a commutative local ring with maximal $P$. Assume that $P^2=0$. Let $q$ a positive integer. Then:
\begin{enumerate}
\item  each $(1,q)$-flat module is $(\aleph_0,q)$-flat;
\item  each $(1,q)$-injective module is $(\aleph_0,q)$-injective.
\end{enumerate}
\end{corollary}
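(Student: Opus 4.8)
The plan is to prove (1) by reducing, via a change of basis, to the vanishing of a single $\mathrm{Tor}$, and then to translate $(1,q)$-flatness into a statement about \emph{tensor ranks} of elements of $\mathrm{Tor}_1^R(M,k)$. Note first that one cannot simply invoke Corollary~\ref{C:p-flatideal}: when $P\neq 0$ no nonzero ideal contained in $P$ is even $(1,1)$-flat, because for nonzero subspaces $A,K\subseteq P$ the map $A\otimes_R K\to A$ is the zero map on a nonzero source (as $AK\subseteq P^2=0$). So a direct argument is needed. Write $k=R/P$, $E:=M/PM$. Since $P^2=0$, the ideal $P$ and every ideal inside it is a $k$-vector space, and $M\otimes_R P\cong E\otimes_k P$; put $T:=\mathrm{Tor}_1^R(M,k)=\ker(M\otimes_R P\to M)\subseteq E\otimes_k P$. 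The first step is the observation that $M$ is $(1,q)$-flat if and only if $T\cap(E\otimes_k K)=0$ for every $k$-subspace $K\subseteq P$ with $\dim_k K\le q$ (a $q$-generated ideal is $R$, which is trivial, or such a $K$, and then $\ker(M\otimes_R K\to M)=T\cap(E\otimes_k K)$); equivalently, every nonzero $t\in T$ has tensor rank $\ge q+1$ in $E\otimes_k P$ (the rank of $t$ being the least $r$ with $t=\sum_{l=1}^r e_l\otimes p_l$).

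Next I would reduce the shape of a $q$-generated submodule $G$ of $R^n$: its image $\bar G$ in $R^n/PR^n=k^n$ has dimension $e\le q$, and lifting a suitable element of $GL_n(k)$ to $GL_n(R)$ (possible since $R$ is local) I may assume $\bar G=\langle \bar e_1,\dots,\bar e_e\rangle$, so $G\subseteq R^e\oplus P^{\,n-e}$ and, by Nakayama, the projection $\pi\colon R^n\to R^e$ carries $G$ onto $R^e$. Then $0\to H\to G\xrightarrow{\pi}R^e\to 0$ splits, where $H:=G\cap(0\oplus R^{\,n-e})\subseteq 0\oplus P^{\,n-e}$ is a $k$-subspace; being a direct summand of the $q$-generated module $G$ it satisfies $\dim_k H\le q$. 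A further "shear" automorphism of $R^n$ turns $G$ into $(R^e\oplus 0)\oplus H$, whence $R^n/G\cong R^{\,n-e}/H$. Thus it suffices to prove that $\mathrm{Tor}_1^R(M,R^m/V_0)=0$ whenever $V_0\subseteq P^m$ is a $k$-subspace with $\dim_k V_0=:s\le q$.

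For the final step, $\mathrm{Tor}_1^R(M,R^m/V_0)=\ker\bigl(M\otimes_R V_0\to M\otimes_R R^m=M^m\bigr)$. Because $V_0\hookrightarrow P^m$ is a (split) monomorphism of $k$-vector spaces and $M\otimes_R(-)$ agrees with $E\otimes_k(-)$ on $k$-modules, the map $M\otimes_R V_0\to M\otimes_R P^m$ is injective, so the kernel above equals $(E\otimes_k V_0)\cap\ker(M\otimes_R P^m\to M^m)$ inside $M\otimes_R P^m=(E\otimes_k P)\otimes_k k^m$; and $\ker(M\otimes_R P^m\to M^m)=T^m=T\otimes_k k^m$. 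Fix a $k$-basis $v_1,\dots,v_s$ of $V_0$ and write $v_j=\sum_i (v_j)_i\otimes\epsilon_i$ with $(v_j)_i\in P$. An element $z$ of the intersection can be written $z=\sum_j\eta_j\otimes v_j$ with $\eta_j\in E$, and comparing with its expansion $z=\sum_i t_i\otimes\epsilon_i$ in $T\otimes_k k^m$ yields $t_i=\sum_j\eta_j\otimes(v_j)_i\in T$ for every $i$. But $t_i$ lies in $E\otimes_k\mathrm{span}_k\{(v_j)_i:j\}$, so it has tensor rank $\le s\le q$; by the first step such a $t_i$ must vanish. Hence $z=0$, and $M$ is $(n,q)$-flat for every $n$.

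For (2) I would avoid re-running the dual argument and instead pass to the character module $M^{\sharp}=\mathrm{Hom}_{\mathbb Z}(M,\mathbb Q/\mathbb Z)$. If $M$ is $(1,q)$-injective, then by Proposition~\ref{P:inj}(3) it is a $(q,1)$-pure submodule of its injective hull $E(M)$; applying $(-)^{\sharp}$ and Theorem~\ref{T:pure}(4) makes $0\to (E(M)/M)^{\sharp}\to E(M)^{\sharp}\to M^{\sharp}\to 0$ a $(1,q)$-pure sequence whose middle term is flat (Lambek), so $\mathrm{Tor}_1^R(M^{\sharp},R/K)=0$ for every $q$-generated ideal $K$, i.e. $M^{\sharp}$ is $(1,q)$-flat. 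By part (1), $M^{\sharp}$ is $(\aleph_0,q)$-flat; by Proposition~\ref{P:flat}(4) applied to $M^{\sharp}$, its character module $M^{\sharp\sharp}$ is $(\aleph_0,q)$-injective; and since $M$ is a pure submodule of $M^{\sharp\sharp}$, while a pure submodule of an $(n,q)$-injective module is again $(n,q)$-injective (transfer purity up to $E(M^{\sharp\sharp})$, then down to $E(M)$, and apply Proposition~\ref{P:inj}(3)), $M$ is $(\aleph_0,q)$-injective. The main obstacle, and the genuinely new ingredient, is the first step of (1): recognizing that over a ring with $P^2=0$ the property $(1,q)$-flat is controlled by the tensor ranks of the elements of $\mathrm{Tor}_1^R(M,k)$; granted that, the change-of-basis reduction and the rank count are routine, and (2) drops out formally by duality.
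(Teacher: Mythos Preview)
Your argument for (1) is correct and takes a genuinely different route from the paper. The paper simply verifies the hypothesis of Theorem~\ref{T:p-flatideal}: after reducing to $G\subseteq PR^{n+1}$, the module $G$ is semisimple (a $k$-vector space), so $H=G\cap R^n$ is a direct summand and in particular $q$-generated. Your approach is more explicit and self-contained: the reformulation of $(1,q)$-flatness as a lower bound on tensor ranks in $\mathrm{Tor}_1^R(M,k)\subseteq (M/PM)\otimes_kP$ is a nice observation, and the change-of-basis reduction followed by the componentwise rank bound is clean. Both arguments ultimately exploit that submodules of $P^m$ are $k$-vector spaces, but yours bypasses the inductive machine of Theorem~\ref{T:p-flatideal} entirely.

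Your proof of (2), however, has a genuine gap. The assertion ``$E(M)^{\sharp}$ is flat (Lambek)'' is not Lambek's theorem: Lambek says that $N$ is flat if and only if $N^{\sharp}$ is injective, not that $N$ injective forces $N^{\sharp}$ flat. In fact the implication ``$M$ is $(1,q)$-injective $\Rightarrow$ $M^{\sharp}$ is $(1,q)$-flat'' is, by Theorem~\ref{T:coh}, \emph{equivalent} to $R$ being $(1,q)$-coherent; and when $P^2=0$ with $P$ not finitely generated, $R$ is not even $(1,1)$-coherent (the ideal $Ra\cong k=R/P$ is not finitely presented for $0\ne a\in P$). Concretely, over such an $R$ the module $R^{\sharp}$ is injective, yet $R^{\sharp\sharp}$ is not flat: one checks that $PR^{\sharp\sharp}$ is spanned by the evaluations $\mathrm{ev}_p$, $p\in P$, hence has $k$-dimension $\dim_kP$, while $R^{\sharp\sharp}/PR^{\sharp\sharp}$ is enormous, so $R^{\sharp\sharp}$ cannot be free (and over this perfect local ring flat~$=$~free). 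Thus the long exact $\mathrm{Tor}$ sequence does not give $\mathrm{Tor}_1(M^{\sharp},R/K)=0$, and the duality shortcut collapses. The paper instead proves (2) by the same inductive diagram as in Theorem~\ref{T:p-flatideal}, now with $\mathrm{Hom}(-,M)$ in place of $M\otimes-$; the point is that after the reduction to $G\subseteq PR^{n+1}$ the sequence $0\to H\to G\to G'\to 0$ splits, so the $\mathrm{Hom}$-row is exact and the five lemma applies. Your own change-of-basis reduction in part~(1) would let you run exactly this argument.
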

\begin{proof}  Let the notations be as in the previous theorem. We may assume that $G\subseteq PR^{n+1}$. Then $G$ is a semisimple module and $H$ is a direct summand of $G$. So, $(1)$ is a consequence of Theorem~\ref{T:p-flatideal}.

$(2)$. Let $M$ be a $(1,q)$-injective module. We shall prove by induction on $n$ that $M$ is $(n,q)$-injective. We have the following commutative diagram:
\[\begin{matrix}
0 & \rightarrow & \mathrm{Hom}_R(R,M)& \rightarrow & \mathrm{Hom}_R(R^{n+1},M)& \rightarrow & \mathrm{Hom}_R(R^n,M)& \rightarrow & 0\\
{} & {} & \downarrow & {} & \downarrow & {} & \downarrow & {} & {} \\
0 & \rightarrow & \mathrm{Hom}_R(G',M)& \rightarrow & \mathrm{Hom}_R(G,M)& \rightarrow & \mathrm{Hom}_R(H,M)& \rightarrow & 0 
\end{matrix}\]
where the horizontal sequences are exact. By the induction hypothesis the left and the right vertical maps are surjective. It follows that the middle vertical map is surjective too.
\end{proof}

\bigskip
By \cite[Example 5.2]{Sha01} or \cite[Theorem 2.3]{Jon71}, for each integer $n>0$, there exists a ring $R$ for which each finitely generated left ideal is $(1,n)$-flat (hence $(\aleph_0,n)$-flat by Corollary~\ref{C:p-flatideal}) but there is a finitely generated left ideal which is not $(1,n+1)$-flat. The following proposition gives other examples in the commutative case.

\begin{proposition}
\label{P:ExFlat} Let $R$ be a commutative local ring with maximal ideal $P$ and residue field $k$. Assume that $P^2=0$ and $\dim_k\ P>1$. Then, for each positive integer $p<\dim_k\ P$, there exists:
\begin{enumerate}
\item a $(p+1,1)$-presented  $R$-module which is $(\aleph_0,p)$-flat but not $(1,p+1)$-flat;
\item a $(\aleph_0,p)$-injective $R$-module which is not $(1,p+1)$-injective.
\end{enumerate}
\end{proposition}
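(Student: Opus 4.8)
The plan is to build an explicit finitely presented (indeed $(p+1,1)$-presented) module for part (1), compute its behaviour under tensoring, and then obtain part (2) either by the same device on the dual side or by applying the character-module duality of Proposition~\ref{P:flat}. Since $P^2=0$, every element of $P$ is killed by $P$, so $P$ is a $k$-vector space; pick elements $a_1,\dots,a_{p+1}\in P$ that are $k$-linearly independent (possible since $\dim_k P>p$). For part (1) consider the cyclic presentation
\[
N \;=\; R^{p+1}\big/ R(a_1,\dots,a_{p+1}),
\]
which is $(p+1,1)$-presented. I expect that $N$ is $(\aleph_0,p)$-flat: by Corollary~\ref{C:OneAlepFlat}(1) it suffices to check that $N$ is $(1,p)$-flat, i.e. that $N\otimes_R A\to N\otimes_R R$ is injective for every $p$-generated ideal $A$. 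Because $P^2=0$, any $p$-generated ideal $A\subseteq P$ is a $k$-subspace of dimension $\le p$; one computes $N\otimes_R A$ directly and checks, using the linear independence of the $a_i$, that the single relation $(a_1,\dots,a_{p+1})$ cannot produce a nonzero element of the kernel from only $p$ generators. On the other hand $N$ is \emph{not} $(1,p+1)$-flat: take the $(p+1)$-generated ideal $A=\sum_{i=1}^{p+1}Ra_i\subseteq P$, and exhibit a nonzero element of $\ker(N\otimes_R A\to N\otimes_R R)$ coming precisely from the defining relation — the element $\sum_{i=1}^{p+1}\bar e_i\otimes a_i$, where $\bar e_i$ are the images of the standard basis vectors, maps to $0$ in $N\otimes_R R$ while being nonzero in $N\otimes_R A$.

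For part (2), the cleanest route is duality: by Proposition~\ref{P:flat}, $M$ is $(n,m)$-flat if and only if $M^{\sharp}=\mathrm{Hom}_{\mathbb Z}(M,\mathbb Q/\mathbb Z)$ is $(m,n)$-injective (applied with $S=\mathbb Z$, $E=\mathbb Q/\mathbb Z$; note the commutative ring $R$ is a $\mathbb Z$-algebra). Applying $(-)^{\sharp}$ to the module $N$ from part (1) gives an $R$-module $N^{\sharp}$ which is $(p,\aleph_0)$-injective-... — wait, the indices swap to give $(\aleph_0,p)$-flat $\Leftrightarrow$ ... one must be slightly careful: $(1,p)$-flat corresponds to $(p,1)$-injective, and $(n,p)$-flat for all $n$ corresponds to $(p,n)$-injective for all $n$, i.e. $(p,\aleph_0)$-injective, not $(\aleph_0,p)$-injective. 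So a direct dualization of $N$ does not immediately land in the class $(\aleph_0,p)$-injective. Instead I would mimic the construction on the injective side: using $P^2=0$ again, build $M$ as a suitable submodule or quotient involving the same linearly independent $a_1,\dots,a_{p+1}$ — for instance take $M$ to be $k$ itself, or more precisely the module $R/A'$ for an appropriate choice, and check $(1,q)$-injectivity via Proposition~\ref{P:inj} (being a $(q,1)$-pure submodule of the injective hull) together with Corollary~\ref{C:OneAlepFlat}(2), which already upgrades $(1,q)$-injective to $(\aleph_0,q)$-injective. The failure of $(1,p+1)$-injectivity is again witnessed by the $(p+1)$-generated ideal $A=\sum Ra_i$: one produces a homomorphism $A\to M$ that does not extend to $R\to M$, using the relation among the $a_i$ in the target $M/PM\cong k^{\dim_k P}$.

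The main obstacle I anticipate is bookkeeping rather than conceptual: verifying that the candidate module is $(1,p)$-flat (resp. $(1,p+1)$-injective fails) requires an explicit description of $N\otimes_R A$ and of $\mathrm{Hom}_R(A,M)$ for $A\subseteq P$ a finite-dimensional $k$-subspace, and one must track carefully which combinations of the $a_i$ appear. The key leverage in every case is that $P^2=0$ makes all the relevant modules into $k$-vector spaces, so that "being a submodule" becomes "being a subspace" and the minimal number of generators becomes a dimension; linear independence of $a_1,\dots,a_{p+1}$ then cleanly separates the $(\aleph_0,p)$ case (at most $p$ generators, not enough to see the full relation) from the $(1,p+1)$ case (exactly the relation $(a_1,\dots,a_{p+1})$ becomes visible). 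Once the flat example is in hand, Corollary~\ref{C:OneAlepFlat} does the $1\to\aleph_0$ promotion for free on both the flat and injective sides, so the only genuinely new work beyond part (1) is setting up the injective witness in part (2).
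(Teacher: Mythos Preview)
Your construction for part (1) is exactly the paper's: the module $N=R^{p+1}/R(a_1,\dots,a_{p+1})$ with $a_1,\dots,a_{p+1}\in P$ linearly independent over $k$. The paper verifies $(1,p)$-flatness via the equation-solving criterion (Theorem~\ref{T:pure}(3)) for the purity of $K=R(a_1,\dots,a_{p+1})$ in $R^{p+1}$, and verifies failure of $(1,p+1)$-flatness by a splitting argument (if $N$ were $(1,p+1)$-flat it would be free, being $(1,p+1)$-pure-projective). Your direct tensor computations are a valid alternative: since $P^2=0$, for any ideal $A\subseteq P$ one has $N\otimes_R A\cong (N/PN)\otimes_k A\cong k^{p+1}\otimes_k A$, and the map to $N$ factors through $PF/K$; the kernel is nonzero exactly when $(a_1,\dots,a_{p+1})\in A^{p+1}$, i.e.\ when all $a_i\in A$, which is impossible if $\dim_k A\le p$ and is achieved for $A=\sum Ra_i$. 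So part (1) is fine.

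The gap is in part (2). You abandon the character-module argument because you believe Proposition~\ref{P:flat}(4) swaps the indices, but it does not: the equivalence there is ``$M$ is $(n,m)$-flat $\Leftrightarrow$ $\mathrm{Hom}_S(M,E)$ is $(n,m)$-injective'' with the \emph{same} pair $(n,m)$. (You may be confusing this with Theorem~\ref{T:pure}(4), where dualizing a \emph{sequence} does swap $(n,m)\leftrightarrow(m,n)$; but for the flat/injective correspondence the adjunction $\mathrm{Hom}_R(K,\mathrm{Hom}_S(M,E))\cong\mathrm{Hom}_S(M\otimes_R K,E)$ keeps the same $m$-generated $K$ inside the same $n$-generated free module on both sides.) Consequently the one-line argument you rejected is exactly the paper's proof: take $E$ an injective $R$-cogenerator (here $R$ is commutative, so one may use $S=R$), and set the witness for (2) equal to $\mathrm{Hom}_R(N,E)$. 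By Proposition~\ref{P:flat}(4) this module is $(\aleph_0,p)$-injective because $N$ is $(\aleph_0,p)$-flat, and it fails to be $(1,p+1)$-injective because $N$ fails to be $(1,p+1)$-flat and $E$ is a cogenerator. Your fallback proposal (``take $M=k$ or $R/A'$ for an appropriate choice'') is not a proof as stated; the correct fix is simply to reinstate the duality argument you discarded.
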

\begin{proof}
(1). Let $F$ be a free $R$-module of rank $(p+1)$ with basis $\{e_1,\dots,e_p,e_{p+1}\}$, let $(a_1,\dots,a_p,a_{p+1})$ be a family of linearly independent elements of $P$, let $K$ be the submodule of $F$ generated by $\sum_{i=1}^{p+1}a_ie_i$ and let $M=F/K$. Then $M\cong D(W_{p,1,p+1})$ (see the  proof of Proposition~\ref{P:locEnd}). First, we show that $K$ is a $(1,p)$-pure submodule of $F$. We consider the following equation:
\begin{equation}\label{eq:s4}
\sum_{j=1}^pr_jx_j=s(\sum_{i=1}^{p+1}a_ie_i)
\end{equation}
where $r_1,\dots,r_p,s\in R$ and with unknowns $x_1,\dots,x_p$. Assume that this equation has a solution in $F$. Suppose there exists $\ell,\ 1\leq\ell\leq p$, such that $r_{\ell}$ is a unit. For each $j,\ 1\leq j\leq p$, we put $x'_j=\delta_{j,\ell}r_{\ell}^{-1}s(\sum_{i=1}^{p+1}a_ie_i)$. It is easy to check that $(x'_1,\dots,x'_p)$ is a solution of (\ref{eq:s4}) in $K$. Now we assume that $\ r_j\in P,\ \forall j,\ 1\leq j\leq p$. Suppose that $(x_1,\dots,x_p)$ is a solution of (\ref{eq:s4}) in $F$. For each $j,\ 1\leq j\leq p$, $x_j=\sum_{i=1}^{p+1}c_{j,i}e_i$, where $c_{j,i}\in R$. We get the following equality:
\begin{equation}
\sum_{i=1}^{p+1}\left( \sum_{j=1}^pr_jc_{j,i}\right) e_i=\sum_{i=1}^{p+1}sa_ie_i
\end{equation}
We deduce that:
\begin{equation}
\forall i,\quad 1\leq i\leq p+1,\quad\sum_{j=1}^pr_jc_{j,i}=sa_i
\end{equation}
So, if $s$ is a unit, $\forall i,\ 1\leq i\leq p+1,\ a_i\in\sum_{j=1}^pRr_j$. It  follows that \[\dim_k\left( \sum_{i=1}^{p+1}Ra_i\right) \leq p\] that is false. So, $s\in P$. In this case (\ref{eq:s4}) has the nil solution. Hence $M$ is $(\aleph_0,p)$-flat by Proposition~\ref{P:flat}(2) and Corollary~\ref{C:OneAlepFlat}.

By way of contradiction suppose that $M$ is $(1,p+1)$-flat. It follows that $K$ is a $(1,p+1)$-pure submodule of $F$ by Proposition~\ref{P:flat}. Since $M$ is $(1,p+1)$-pure-projective we deduce that $M$ is free. This is false.

(2). Let $E$ be an injective $R$-cogenerator. Then $\mathrm{Hom}_R(M,E)$ is $(\aleph_0,p)$-injective but not $(1,p+1)$-injective by Proposition~\ref{P:flat}(4).
\end{proof}

\medskip
In a similar way we show the following proposition.

\begin{proposition}
Let $R$ be a commutative local ring with maximal ideal $P$. Assume that $P^2=0$. Let $M$ be  a $(m,1)$-presented $R$-module with $m>1$, let $\{x_1,\dots,x_m\}$ be a spanning set of $M$ and let $\sum_{j=1}^ma_jx_j=0$ be the relation of $M$, where $a_1,\dots,a_m\in P$.  If $p=\mathrm{gen}\ (\sum_{j=1}^mRa_j)-1>0$, then:
\begin{enumerate}
\item $M$ is $(\aleph_0,p)$-flat but  not $(1,m)$-flat;
\item $\mathrm{Hom}_R(M,E)$ is $(\aleph_0,p)$-injective but not $(1,m)$-injective, where $E$ is an injective $R$-cogenerator.
\end{enumerate}
\end{proposition}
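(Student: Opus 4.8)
The plan is to transcribe the proof of Proposition~\ref{P:ExFlat}, with $W_{p,1,p+1}$ replaced by the given module. Write $M=F/K$, where $F$ is free of rank $m$ with basis $e_1,\dots,e_m$ and $K=Rk_0$ with $k_0=\sum_{j=1}^m a_je_j$. Since $p=\mathrm{gen}\bigl(\sum_j Ra_j\bigr)-1>0$, the $a_j$ are not all zero, so $k_0\neq 0$ and $\mathrm{gen}\ K=1$; as $K\subseteq PF$ we have $\mathrm{gen}\ M=m$.

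For (1), the first step is to show that $K$ is a $(1,p)$-pure submodule of $F$. Take an equation $\sum_{j=1}^p r_jx_j=sk_0$ with $r_j,s\in R$ that is solvable in $F$, say by $x_j=\sum_i c_{j,i}e_i$. If some $r_\ell$ is a unit, then $\bigl(\delta_{j,\ell}r_\ell^{-1}sk_0\bigr)_j$ is a solution lying in $K$. Otherwise all $r_j\in P$, and comparing the coefficient of each $e_i$ gives $\sum_j r_jc_{j,i}=sa_i$; if $s$ were a unit this would yield $\sum_i Ra_i\subseteq\sum_j Rr_j$, but since $P^2=0$ both ideals are $k$-subspaces of $P$, so $\mathrm{gen}\bigl(\sum_i Ra_i\bigr)=\dim_k\sum_i Ra_i\leq\dim_k\sum_j Rr_j\leq p$, contradicting the hypothesis. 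Hence $s\in P$, so $sa_i=0$ and the equation is solved by $0\in K$. Thus $K$ is $(1,p)$-pure in $F$, so $M$ is $(\aleph_0,p)$-flat by Proposition~\ref{P:flat}(2) and Corollary~\ref{C:OneAlepFlat}(1). For the negative part, if $M$ were $(1,m)$-flat then $K$ would be $(1,m)$-pure in $F$ by Proposition~\ref{P:flat}; as $M$ is $(m,1)$-presented it is $(1,m)$-pure-projective (Proposition~\ref{P:PureProj}(2)), so the sequence $0\rightarrow K\rightarrow F\rightarrow M\rightarrow 0$ splits and $F\cong M\oplus K$. Since $\mathrm{gen}$ is additive over the local ring $R$, this gives $m=\mathrm{gen}\ F=\mathrm{gen}\ M+\mathrm{gen}\ K=m+1$, a contradiction.

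For (2), I would invoke Proposition~\ref{P:flat}(4) with $S=R$: because $E$ is an injective cogenerator, $\mathrm{Hom}_R(M,E)$ is $(n,q)$-injective if and only if $M$ is $(n,q)$-flat, for all positive integers $n,q$. By part (1), $M$ is $(\aleph_0,p)$-flat but not $(1,m)$-flat, so $\mathrm{Hom}_R(M,E)$ is $(\aleph_0,p)$-injective but not $(1,m)$-injective.

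The one point that really uses the hypotheses is the step forcing $s\in P$: it exploits $P^2=0$ to make $\sum_i Ra_i$ and $\sum_j Rr_j$ into finite-dimensional $k$-vector spaces, so that $\mathrm{gen}$ equals $\dim_k$ and is monotone under inclusion --- without $P^2=0$, a submodule of a $p$-generated ideal of a local ring need not be $p$-generated. Everything else is a routine copy of the argument for Proposition~\ref{P:ExFlat}, using only $m>1$ and the nontriviality of the relation, both of which follow from $p>0$.
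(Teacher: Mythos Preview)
Your proof is correct and follows exactly the approach the paper intends: it says only ``In a similar way we show the following proposition,'' referring back to Proposition~\ref{P:ExFlat}, and you have carried out that adaptation faithfully, including the key step where $P^2=0$ forces $\mathrm{gen}(\sum_i Ra_i)\leq p$ via the $k$-vector-space comparison. Your contradiction $m=m+1$ for the non-$(1,m)$-flatness is the explicit form of the paper's ``$M$ is free; this is false.''
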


When $R$ is an arithmetical commutative ring, i.e. its lattice of ideals is distributive, each $(1,1)$-flat module is flat and by \cite[Theorem VI.9.10]{FuSa01} the converse holds if $R$ is a commutative domain (it is also true if each principal ideal is flat).  However we shall see that there exist non-arithmetical commutative rings for which each $(1,1)$-flat module is flat. Recall that a left (or right) $R$-module $M$ is {\it torsionless} if the natural map $M\rightarrow (M^*)^*$ is injective.

\begin{proposition}
\label{P:AlephInj} For each ring $R$ the following conditions are equivalent:
\begin{enumerate}
\item $R$ is right self $(\aleph_0,1)$-injective;
\item each finitely presented cyclic left $R$-module is torsionless;
\item each finitely generated left ideal $A$ satisfies $A=\mathrm{l-ann}(\mathrm{r-ann}(A)).$
\end{enumerate}
\end{proposition}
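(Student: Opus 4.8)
The plan is to prove $(1)\Leftrightarrow(3)$ by unwinding the definition of $(\aleph_0,1)$-injectivity, and $(2)\Leftrightarrow(3)$ by computing the bidual $(M^*)^*$ of a finitely presented cyclic left module $M$; in both cases the argument reduces to the annihilator identity $A=\mathrm{l-ann}(\mathrm{r-ann}(A))$, and no single step is difficult.

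For $(1)\Leftrightarrow(3)$, I would fix a positive integer $n$ and a cyclic submodule $K=xR$ of the free right $R$-module $R^n$, where $x=(r_1,\dots,r_n)$, and let $A=Rr_1+\cdots+Rr_n$ be the corresponding left ideal. Identifying $\mathrm{Hom}_R(R^n,R)$ with the row vectors $(c_1,\dots,c_n)$, which act on $x$ by $(c_1,\dots,c_n)\mapsto\sum_{i=1}^n c_ir_i$, and identifying $\mathrm{Hom}_R(xR,R)$ with the set of $a\in R$ satisfying $\mathrm{r-ann}(x)\subseteq\mathrm{r-ann}(a)$ (namely, the image of $x$), the restriction map $\mathrm{Hom}_R(R^n,R)\to\mathrm{Hom}_R(xR,R)$ is onto exactly when every such $a$ lies in $A$. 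Since $\mathrm{r-ann}(x)=\bigcap_{i=1}^n\mathrm{r-ann}(r_i)=\mathrm{r-ann}(A)$, the condition $\mathrm{r-ann}(A)\subseteq\mathrm{r-ann}(a)$ is precisely $a\in\mathrm{l-ann}(\mathrm{r-ann}(A))$, and since $A\subseteq\mathrm{l-ann}(\mathrm{r-ann}(A))$ always holds, this says: $R$ is right self $(n,1)$-injective if and only if $A=\mathrm{l-ann}(\mathrm{r-ann}(A))$ for every left ideal $A$ generated by at most $n$ elements. Taking the union over all $n$ gives $(1)\Leftrightarrow(3)$.

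For $(2)\Leftrightarrow(3)$, note that a finitely presented cyclic left $R$-module is of the form $M=R/A$ with $A$ a finitely generated left ideal. I would first check that $f\mapsto f(1+A)$ defines an isomorphism of right $R$-modules $M^*=\mathrm{Hom}_R(R/A,R)\cong\mathrm{r-ann}(A)$ (well-definedness of $f$ forces $f(1+A)\in\mathrm{r-ann}(A)$). Under this identification the natural map $M\to(M^*)^*$ sends $r+A$ to the homomorphism $\mathrm{r-ann}(A)\to R$, $b\mapsto rb$, that is, to left multiplication by $r$ restricted to $\mathrm{r-ann}(A)$; hence its kernel is exactly $\mathrm{l-ann}(\mathrm{r-ann}(A))/A$. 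Thus $R/A$ is torsionless if and only if $A=\mathrm{l-ann}(\mathrm{r-ann}(A))$, which yields $(2)\Leftrightarrow(3)$.

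The only point requiring care is the bookkeeping of sides: one must verify that the well-definedness constraint on a homomorphism out of a cyclic module ($xR$ on the right in the first part, $R/A$ on the left in the second) translates into exactly the intended annihilator containment, and that $\mathrm{r-ann}(x)$ of the element $x=(r_1,\dots,r_n)$ coincides with $\mathrm{r-ann}(A)$ of the left ideal it generates. Beyond this careful unwinding there is no real obstacle.
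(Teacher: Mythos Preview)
Your proof is correct. The paper itself gives almost no detail, citing \cite[Theorem~2.3]{Ja73} for $(1)\Leftrightarrow(2)$ and calling $(2)\Leftrightarrow(3)$ easy; your direct unwinding via the double-annihilator condition is the same underlying argument made explicit, with the only organizational difference being that you route both equivalences through $(3)$ rather than through $(2)$.
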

\begin{proof}
We prove $(1)\Leftrightarrow (2)$ as \cite[Theorem 2.3]{Ja73} and $(2)\Leftrightarrow (3)$ is easy.
\end{proof}

\begin{theorem}
\label{T:parfait} Let $R$ be a right perfect ring which is right self $(\aleph_0,1)$-injective. Then each $(1,1)$-flat right module is projective.
\end{theorem}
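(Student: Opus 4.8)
The plan is to reduce to showing that every $(1,1)$-flat right $R$-module $M$ is flat; since $R$ is right perfect, flatness then gives projectivity. The reduction runs through a projective cover. Because $R$ is right perfect, $M$ has a projective cover $0\rightarrow K\rightarrow P\rightarrow M\rightarrow 0$; here $K$ is superfluous in $P$, hence $K\subseteq PJ$, $J$ being the Jacobson radical. As $M$ is $(1,1)$-flat, Proposition~\ref{P:flat} shows this exact sequence is $(1,1)$-pure, so $K$ is an RD-submodule of $P$, i.e. $K\cap Pr=Kr$ for all $r\in R$. It now suffices to prove that $K$ is a \emph{pure} submodule of $P$, for then $M=P/K$ is flat ($P$ being flat), hence projective. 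Since $P$ is flat, this amounts to proving $K\cap PA=KA$ for every finitely generated left ideal $A$.

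To exploit that $R$ is right self $(\aleph_0,1)$-injective, I would use Proposition~\ref{P:AlephInj}: every finitely generated left ideal $A$ has the form $A=\mathrm{l-ann}(B)$ with $B=\mathrm{r-ann}(A)$. Writing $P$ as a direct summand of a free right module $F=R^{(\Lambda)}$ and noting $\mathrm{Ann}_F(B)=(\mathrm{l-ann}(B))^{(\Lambda)}=FA$, one gets $PA=\{w\in P\mid wB=0\}$; since the generators $a_i$ of $A$ kill $B$ on the right, $K\cap PA=\{z\in K\mid zB=0\}$. Thus ``$K$ pure in $P$'' is equivalent to: for every finitely generated left ideal $A$, every $z\in K$ with $z\,\mathrm{r-ann}(A)=0$ lies in $KA$. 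When $A$ is principal this is exactly the RD-condition already at our disposal, so the theorem reduces to upgrading the RD-condition from principal to finitely generated left ideals. I would also record, for later use, that $(\aleph_0,1)$-injectivity passes to direct sums and direct summands, so $P$ (being projective) is $(\aleph_0,1)$-injective.

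The core of the proof --- and the step I expect to be the main obstacle --- is this upgrade, and it is here that both hypotheses are indispensable. I would proceed by induction on $\mathrm{gen}\ A$. Given $A=Ra_1+\dots+Ra_k$ and $z\in K$ with $z=\sum_i p_ia_i\in PA$, the idea is to use the $(\aleph_0,1)$-injectivity of $P$ to adjust the coefficients $p_i$ modulo the module of relations among the $a_i$, to split off the last generator via the RD-condition applied to $Pa_k$, and to apply the inductive hypothesis both to $Ra_1+\dots+Ra_{k-1}$ and to the intersection ideal $(Ra_1+\dots+Ra_{k-1})\cap Ra_k$. The delicate point is that these auxiliary intersection ideals must not cause an uncontrolled growth in the number of generators, so that the induction is well founded. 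This is where right perfectness is needed beyond the final ``flat $\Rightarrow$ projective'' deduction: right perfectness gives the descending chain condition on principal left ideals and right $T$-nilpotence of $J$, which (together with the annihilator condition on finitely generated left ideals, in the spirit of Lemma~\ref{L:Levy} and Corollary~\ref{C:perfect}) force the boundedness and termination that make the induction go through.
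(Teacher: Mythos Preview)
Your reduction to flatness and your reformulation via $PA=\{w\in P\mid w\,\mathrm{r\text{-}ann}(A)=0\}$ are correct, but the core step has a genuine gap. The induction you propose is on $\mathrm{gen}\ A$, and you already see the problem: the auxiliary ideal $(Ra_1+\dots+Ra_{k-1})\cap Ra_k$ need not have fewer than $k$ generators, so the induction is not well founded. Your appeal to Lemma~\ref{L:Levy} and Corollary~\ref{C:perfect} does not fix this: those results characterize \emph{when} a uniform bound $p$ on $\mathrm{gen}\ A$ exists, and such a bound is \emph{not} a consequence of right perfectness (indeed, Corollary~\ref{C:perfect} lists it as one of several equivalent conditions, not as automatic). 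Right $T$-nilpotence of $J$ gives no control on the number of generators of intersection ideals either. So as written, the upgrade from the RD-condition to full purity is not established.

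The paper's argument bypasses this difficulty by descending on the \emph{ideal itself} rather than on its number of generators. Working directly with $z\in M\otimes_RA$ mapping to $0$ in $M$, it uses the double-annihilator identity (Proposition~\ref{P:AlephInj}) to find, for a minimal generating set $a_1,\dots,a_n$, elements $b_i\in\mathrm{r\text{-}ann}(A_i)\setminus\mathrm{r\text{-}ann}(A)$ (where $A_i$ omits $a_i$); from $y_ia_ib_i=0$ and $(1,1)$-flatness one rewrites each $y_i$ so that $z$ is the image of some $z^{(1)}\in M\otimes_RA^{(1)}$ with $A^{(1)}\subsetneq A$ a \emph{strictly smaller} finitely generated left ideal. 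Iterating and invoking the descending chain condition on finitely generated left ideals (which right perfect rings do satisfy) forces the chain to reach an ideal with at most one generator, where $(1,1)$-flatness kills $z$. The well-foundedness comes from DCC on the inclusion order of finitely generated left ideals, not from any bound on $\mathrm{gen}\ A$. Your projective-cover framework could in principle be salvaged by replacing the induction on $\mathrm{gen}\ A$ with Noetherian induction on that same lattice, but then you would essentially be reproducing the paper's descent inside $K$ rather than inside $M\otimes_R A$, and the detour through $P$ and $K$ buys nothing.
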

\begin{proof}
Let $M$ be a $(1,1)$-flat right $R$-module. It is enough to show that $M$ is flat. Let $A$ be a finitely generated left ideal of $R$. Assume that $\{a_1,\dots,a_n\}$ is a minimal system of generators of $A$ with $n>1$. Let $z\in M\otimes_RA$ such that its image in $M$ is $0$. We have $z=\sum_{i=1}^ny_i\otimes a_i$, where $y_1,\dots,y_n\in M$, and $\sum_{i=1}^ny_ia_i=0$. For each $i,\ 1\leq i\leq n$, we set $A_i=\sum_{\binom{j=1}{j\ne i}}^nRa_j$. Then, $\forall i,\ 1\leq i\leq n$, $A_i\subset A$. For each finitely generated left ideal $B$ we have $B=\mathrm{l-ann}(\mathrm{r-ann}(B))$. It follows that, $\forall i,\ 1\leq i\leq n$, $\mathrm{r-ann}(A)\subset \mathrm{r-ann}(A_i)$. Let $b_i\in \mathrm{r-ann}(A_i))\setminus \mathrm{r-ann}(A)$. Then $y_ia_ib_i=0$. From the $(1,1)$-flatness of $M$ we deduce that $y_i=\sum_{k=1}^{m_i}y'_{i,k}c_{i,k}$, where $y'_{i,1},\dots,y'_{i,m_i}\in M$ and $c_{i,1},\dots,c_{i,m_i}\in R$ with $c_{i,k}a_ib_i=0,\ \forall k,\ 1\leq k\leq m_i$. It follows that $z=\sum_{i=1}^n(\sum_{k=1}^{m_i}y'_{i,k}\otimes c_{i,k}a_i)$. Let $A^{(1]}$ be the left ideal generated by $\{c_{i,k}a_i\mid 1\leq i\leq n,\ 1\leq k\leq m_i\}$. Then $A^{(1)}\subset A$; else, $\forall i,\ 1\leq i\leq n$, $a_i=\sum_{j=1}^{n}(\sum_{k=1}^{m_j}d_{i,j,k}c_{j,k}a_j)$ with $d_{i,j,k}\in R$; we get that $a_ib_i=\sum_{j=1}^{n}(\sum_{k=1}^{m_j}d_{i,j,k}c_{j,k}a_jb_i)$; but $a_jb_i=0$ if $j\ne i$ and $c_{i,k}a_ib_i=0$; so, there is a contradiction because the second member of the previous equality is $0$ while $a_ib_i\ne 0$ . Let $\{a^{(1)}_1,\dots,a^{(1)}_{n_1}\}$ be a minimal system of generators of $A^{(1)}$. So, $z=\sum_{i=1}^{n_1}y^{(1)}_i\otimes a^{(1)}_i$ where $y^{(1)}_1,\dots,y^{(1)}_{n_1}\in M$, and $z$ is the image of $z^{(1)}\in M\otimes_RA^{(1)}$ defined by $z^{(1)}=\sum_{i=1}^{n_1}y^{(1)}_i\otimes a^{(1)}_i$. If $n_1\leq 1$ we conclude that $z^{(1)}=0$ since $M$ is $(1,1)$-flat, and $z=0$. If $n_1>1$, in the same way we get that $z^{(1)}$ is the image of an element $z^{(2)}\in M\otimes_RA^{(2)}$ where $A^{(2)}$ is a left ideal such that $A^{(2)}\subset A^{(1)}$. If $\mathrm{gen}\ A^{(2)}>1$ we repeat this process, possibly several times, until we get a left ideal $A^{(l)}$ with $\mathrm{gen}\ A^{(l)}\leq 1$; this is possible because $R$ satisfies the descending chain condition on finitely generated left ideals since it is right perfect (see \cite[Th\'eor\`eme 5 p.130]{Ren75}). The $(1,1)$-flatness of $M$ implies that $z^{(l)}=0$ and $z=0$. So, $M$ is projective.
\end{proof}

Let $\mathcal{P}$ be a ring property. We say that a commutative ring $R$ is {\it locally $\mathcal{P}$} if $R_P$ satisfies $\mathcal{P}$ for each maximal ideal $P$. 

The following corollary is a consequence of Theorem~\ref{T:parfait} and Proposition~\ref{P:localflat}.
\begin{corollary}
\label{C:LocPer} Let $R$ be a commutative ring which is locally perfect and locally self $(\aleph_0,1)$-injective. Then each $(1,1)$-flat $R$-module is flat.
\end{corollary}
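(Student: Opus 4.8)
The plan is to localise and invoke Theorem~\ref{T:parfait}. Let $M$ be a $(1,1)$-flat $R$-module; the goal is to show that $M$ is flat. The first step is to recall that flatness is a local property: $M$ is flat over $R$ if and only if $M_P$ is flat over $R_P$ for every maximal ideal $P$. (Equivalently, since a module is flat precisely when it is $(\aleph_0,\aleph_0)$-flat, one may apply Proposition~\ref{P:localflat} with $n=m=\aleph_0$.) Thus it suffices to prove that $M_P$ is a flat $R_P$-module for each maximal ideal $P$.

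Now fix $P$. By Proposition~\ref{P:localflat} (applied with $n=m=1$), $M_P$ is a $(1,1)$-flat $R_P$-module. The hypotheses on $R$ are exactly what is needed to run Theorem~\ref{T:parfait} over the ring $R_P$: since $R$ is locally perfect, $R_P$ is perfect; and since $R$, hence $R_P$, is commutative, the hypothesis that $R$ is locally self $(\aleph_0,1)$-injective says precisely that $R_P$ is right self $(\aleph_0,1)$-injective. Theorem~\ref{T:parfait} then yields that the $(1,1)$-flat $R_P$-module $M_P$ is projective over $R_P$, in particular flat over $R_P$. As this holds for every maximal ideal $P$, we conclude that $M$ is flat.

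The argument is essentially bookkeeping; the only point that deserves (minor) attention is checking that the two ``locally $\mathcal{P}$'' hypotheses transfer to each localisation $R_P$ in the precise form demanded by Theorem~\ref{T:parfait}. This is immediate from the definition of ``locally $\mathcal{P}$'' together with commutativity of $R$, which makes the left/right distinction in the statement of Theorem~\ref{T:parfait} irrelevant. So there is no genuine obstacle here.
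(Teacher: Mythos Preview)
Your proof is correct and follows exactly the approach indicated in the paper, which states that the corollary is a consequence of Theorem~\ref{T:parfait} and Proposition~\ref{P:localflat}. You have simply spelled out the details: localise, apply Proposition~\ref{P:localflat} to transfer $(1,1)$-flatness to $M_P$, invoke Theorem~\ref{T:parfait} on the commutative (hence right) perfect and right self $(\aleph_0,1)$-injective ring $R_P$, and conclude by the local nature of flatness.
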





\section{$(n,m)$-coherent rings}
\label{S:coherent}
We say  that a ring $R$ is left $(n,m)${\it -coherent} if each $m$-generated submodule of a $n$-generated free left $R$-module is finitely presented. We say that $R$ is left $(\aleph_0,m)${\it -coherent} (respectively $(n,\aleph_0)${\it -coherent}) if for each integer $n>0$ (respectively $m>0$) $R$ is left $(n,m)$-coherent. The following theorem can be proven with standard technique: see  \cite[Theorems 5.1 and 5.7]{ZhChZh05}. In this theorem the integers $n$ or $m$ can be replaced with $\aleph_0$.
\begin{theorem}\label{T:coh}
Let $R$ be a ring and $n, m$ two fixed positive integers. Assume that $R$ is an algebra over a commutative ring $S$. Let $E$ be an injective $S$-cogenerator. Then the following conditions are equivalent:
\begin{enumerate}
\item $R$ is left $(n,m)$-coherent;
\item any direct product of right $(n,m)$-flat $R$-modules is $(n,m)$-flat;
\item for any set $\Lambda$, $R^{\Lambda}$ is a $(n,m)$-flat right $R$-module;
\item any direct limit of a direct system of $(n,m)$-injective left $R$-modules is $(n,m)$-injective;
\item for any exact sequence of left modules $0\rightarrow A\rightarrow B\rightarrow C\rightarrow 0$, $C$ is $(n,m)$-injective if so is $B$ and if $A$ is a $(\aleph_0,m)$-pure submodule of $B$;
\item for each $(n,m)$-injective left $R$-module $M$, $\mathrm{Hom}_S(M,E)$ is $(n,m)$-flat.
\end{enumerate}
\end{theorem}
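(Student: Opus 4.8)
The plan is to close a cycle of implications among the six conditions, treating them as two linked families: the ``flat'' conditions $(1),(2),(3)$ together with $(6)$, which is glued to the others by the character functor $\mathrm{Hom}_S(-,E)$, and the ``injective'' conditions $(1),(4),(5)$. Two standard facts will do most of the work. First, if $K$ is a \emph{finitely presented} left $R$-module, then for every family $(M_\lambda)$ of right $R$-modules the canonical map $(\prod_\lambda M_\lambda)\otimes_R K\to\prod_\lambda(M_\lambda\otimes_R K)$ is an isomorphism, and for every left $R$-module $M$ the canonical map $\mathrm{Hom}_S(M,E)\otimes_R K\to\mathrm{Hom}_S(\mathrm{Hom}_R(K,M),E)$ is an isomorphism; both are checked on $K=R$ and extended along a finite free presentation of $K$, using that $\mathrm{Hom}_S(-,E)$ is exact. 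Second, Chase's device: a generating family $(\ell_\lambda)_{\lambda\in\Lambda}$ of a submodule $L$ of $R^m$ can be packaged as a single element of $R^\Lambda\otimes_R R^m=(R^\Lambda)^m$, and its membership in the image of $R^\Lambda\otimes_R L$ forces $L$ to be finitely generated.

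I would first dispatch the ``forward'' implications, that $(1)$ implies each of $(2)$--$(6)$. For $(1)\Rightarrow(2)$: if $K$ is an $m$-generated submodule of the free left module $R^n$, then $K$ is finitely presented by $(1)$, so tensoring $K\hookrightarrow R^n$ with a product $\prod_\lambda M_\lambda$ of $(n,m)$-flat right modules yields, after the first isomorphism above, the product of the monomorphisms $M_\lambda\otimes_R K\to M_\lambda\otimes_R R^n$, hence a monomorphism. Then $(2)\Rightarrow(3)$ is trivial, since $R_R$ is flat, hence $(n,m)$-flat. For $(1)\Rightarrow(6)$: with $M$ a $(n,m)$-injective left module and $K$ as above, the duality isomorphism turns $\mathrm{Hom}_S(M,E)\otimes_R K\to\mathrm{Hom}_S(M,E)\otimes_R R^n$ into $\mathrm{Hom}_S(-,E)$ applied to the restriction $\mathrm{Hom}_R(R^n,M)\to\mathrm{Hom}_R(K,M)$, which is onto by $(n,m)$-injectivity of $M$; exactness of $\mathrm{Hom}_S(-,E)$ makes it injective, so $\mathrm{Hom}_S(M,E)$ is $(n,m)$-flat. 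For $(1)\Rightarrow(4)$ and $(1)\Rightarrow(5)$ I would use that a left module $M$ is $(n,m)$-injective if and only if $\mathrm{Ext}^1_R(C,M)=0$ for every $(n,m)$-presented left module $C$ (from $0\to K\to R^n\to C\to 0$ and $\mathrm{Ext}^1_R(R^n,M)=0$); under $(1)$ each such $C$ is of type $\mathrm{FP}_2$, so $\mathrm{Ext}^1_R(C,-)$ commutes with direct limits, which gives $(4)$, and the long exact $\mathrm{Ext}$-sequence of $0\to A\to B\to C\to 0$ together with Theorem~\ref{T:pure} applied to the $(\aleph_0,m)$-pure hypothesis gives $(5)$.

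I would then turn to the ``reverse'' implications. The crucial one is $(3)\Rightarrow(1)$, run as Chase's argument in the $(n,m)$-setting: write an $m$-generated submodule of $R^n$ as the image of a map $f\colon R^m\to R^n$, put $L=\ker f$, choose a generating family $(\ell_\lambda)_{\lambda\in\Lambda}$ of $L$, and form $\xi\in(R^\Lambda)^m$ whose $j$-th coordinate is $(\ell_{\lambda,j})_\lambda$; a direct check gives $(1_{R^\Lambda}\otimes f)(\xi)=0$, so $(n,m)$-flatness of $R^\Lambda$ forces $\xi$ to lie in the image of $R^\Lambda\otimes_R L\to(R^\Lambda)^m$, hence to be a \emph{finite} sum $\sum_{k=1}^N d_k\otimes\ell'_k$ with $\ell'_k\in L$; reading off $\lambda$-coordinates yields $\ell_\lambda\in\sum_k R\ell'_k$ for all $\lambda$, so $L$ is finitely generated and $K$ is finitely presented. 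On the injective side I would get $(4)\Rightarrow(1)$ (and $(5)\Rightarrow(1)$) by adapting Stenström's proof that a ring over which the FP-injective modules are closed under direct limits is coherent: embed a suitable left module into an injective (hence $(n,m)$-injective) module and into its $(\aleph_0,m)$-pure-injective hull (Proposition~\ref{P:PureInj}), and push the closure hypothesis to force an arbitrary $m$-generated submodule of $R^n$ to have a finitely generated relation module. The equivalence $(4)\Leftrightarrow(5)$ is bookkeeping — in particular $(4)$ already makes arbitrary direct sums of $(n,m)$-injectives $(n,m)$-injective, since they are direct limits of finite subsums — and $(6)$ can be tied back in by applying its hypothesis to character modules of injectives and using that $M$ embeds purely in $M^{\sharp\sharp}$.

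The hard part will be isolating the two genuinely non-formal steps. One is Chase's computation $(3)\Rightarrow(1)$, where the trick of encoding an a priori unbounded generating set as a single element of $R^\Lambda\otimes_R R^m$ is exactly what converts flatness of a product of copies of $R$ into a finiteness statement. The other is the passage, on the injective side, from a mere closure property of the class of $(n,m)$-injective modules back to coherence: unlike the direction ``$\mathrm{FP}_2\Rightarrow\mathrm{Ext}^1$ commutes with direct limits'', this is not formal and needs the Stenström-type construction above, with care about the index juggling between left/right and between $n$ and $m$ forced by Propositions~\ref{P:flat} and \ref{P:inj}. Everything else — the forward implications and the duality equivalence $(1)\Leftrightarrow(6)$ — is routine manipulation of the two displayed isomorphisms, and the $S$-algebra hypothesis and the cogenerator $E$ enter only to make $\mathrm{Hom}_S(-,E)$ available and exact, which is the bridge between the flat and injective halves.
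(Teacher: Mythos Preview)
The paper gives no proof of this theorem; it simply states that it ``can be proven with standard technique'' and cites \cite[Theorems~5.1 and~5.7]{ZhChZh05}. Your proposal therefore already supplies far more than the paper does, and it follows the standard route correctly: the forward implications $(1)\Rightarrow(2)$--$(6)$ and Chase's argument $(3)\Rightarrow(1)$ are accurately laid out.

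Two remarks on the looser parts of your sketch. First, arbitrary direct sums of $(n,m)$-injective left modules are $(n,m)$-injective \emph{unconditionally}: the direct sum is pure in the product, products of $(n,m)$-injectives are trivially $(n,m)$-injective, and any pure (hence $(m,n)$-pure) submodule of an $(n,m)$-injective module is $(n,m)$-injective by exactly the lifting trick you use in $(1)\Rightarrow(5)$. So you do not need $(4)$ for that fact, and with it $(5)\Rightarrow(4)$ is immediate via the pure exact sequence $0\to N\to\bigoplus_i M_i\to\varinjlim M_i\to 0$. Second, your return routes from $(6)$ and $(4)$ are where the real content hides. For $(6)$, ``character modules of injectives'' is not quite the right target; what works cleanly is $(6)\Rightarrow(2)$: if each $M_\lambda$ is $(n,m)$-flat then $\bigoplus_\lambda M_\lambda^{+}$ is $(n,m)$-injective (pure in $\prod_\lambda M_\lambda^{+}$), so $(6)$ makes its $E$-dual $\prod_\lambda M_\lambda^{++}$ $(n,m)$-flat, and $\prod_\lambda M_\lambda$ sits purely inside. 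For $(4)\Rightarrow(1)$, ``adapt Stenstr\"om'' is right in spirit but does need an explicit construction, and the step $(4)\Rightarrow(5)$ is not mere bookkeeping independent of $(1)$: lifting a map $K\to C$ back to $B$ along an $(\aleph_0,m)$-pure epimorphism uses that $K$ is finitely presented, which is precisely $(1)$. All of this is covered in the cited reference.
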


\bigskip
It is well known that each left $(1,\aleph_0)$-coherent ring is left $(\aleph_0,\aleph_0)$-coherent. {\bf For each positive integer $p$, is each left $(1,p)$-coherent ring left $(\aleph_0,p)$-coherent?} 

Propositions~\ref{P:PF=PP} and \ref{P:CommParf} and Theorem~\ref{T:parfait2} give a partial answer to this question.

\begin{proposition}
\label{P:PF=PP} Let $p$ be a positive integer and let $R$ be a ring. For each positive integer $n$, assume that, for each $p$-generated submodule $G$ of the left $R$-module $R^n\oplus R$, $(G\cap R^n)$ is the direct limit of its $p$-generated submodules. Then the following conditions are equivalent:
\begin{enumerate}
\item $R$ is left $(1,p)$-coherent;
\item $R$ is left $(\aleph_0,p)$-coherent.
\end{enumerate}
Moreover, when these conditions hold each $(1,p)$-injective left module is $(\aleph_0,p)$-injective.
\end{proposition}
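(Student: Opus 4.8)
The plan is to dispose of $(2)\Rightarrow(1)$ at once (it is immediate from the definitions), so that the content is the implication $(1)\Rightarrow(2)$ together with the ``moreover'' statement. For $(1)\Leftrightarrow(2)$ the quickest route is through the flatness characterization of coherence. By Theorem~\ref{T:coh} (with the first index allowed to be $\aleph_0$), $R$ is left $(1,p)$-coherent, resp. left $(\aleph_0,p)$-coherent, if and only if $R^{\Lambda}$ is a $(1,p)$-flat, resp. $(\aleph_0,p)$-flat, right $R$-module for every set $\Lambda$. But the standing hypothesis on $R$ is precisely the hypothesis of Theorem~\ref{T:p-flatideal}, which says that a right $R$-module is $(1,p)$-flat if and only if it is $(\aleph_0,p)$-flat; applying this to each $R^{\Lambda}$ yields $(1)\Leftrightarrow(2)$. (Alternatively one can argue directly by induction on $n$, using the same pullback diagram as in the proof of Theorem~\ref{T:p-flatideal}: for a $p$-generated $G\subseteq R^{n+1}=R^n\oplus R$, the image $G'$ under the projection $\pi\colon R^{n+1}\to R$ is a $p$-generated left ideal, hence finitely presented by $(1)$; then $H=G\cap R^n$ is finitely generated, hence $p$-generated by the direct-limit hypothesis, hence finitely presented by the induction hypothesis; and $0\to H\to G\to G'\to 0$ with $H$ and $G'$ finitely presented forces $G$ finitely presented.)

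For the ``moreover'' part I would fix a $(1,p)$-injective left module $M$ and show by induction on $n$ that $M$ is $(n,p)$-injective, the case $n=1$ being the hypothesis. Given a $p$-generated submodule $G$ of $R^{n+1}=R^n\oplus R$, let $\pi\colon R^{n+1}\to R$ and $\rho\colon R^{n+1}\to R^n$ be the two projections, and put $G'=\pi(G)$, $H=G\cap R^n=\ker(\pi|_G)$. As above $G'$ is a $p$-generated left ideal, hence finitely presented by $(1)$; from the exact sequence $0\to H\to G\to G'\to 0$ with $G$ finitely generated and $G'$ finitely presented it follows that $H$ is finitely generated, and then $p$-generated because $H$ is the direct limit of its $p$-generated submodules. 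Now take any $\phi\colon G\to M$. Since $M$ is $(n,p)$-injective and $H$ is a $p$-generated submodule of $R^n$, $\phi|_H$ extends to some $\psi\colon R^n\to M$. The map $\phi-(\psi\circ\rho)|_G$ vanishes on $H$, hence factors as $\bar\phi\circ(\pi|_G)$ for some $\bar\phi\colon G'\to M$; since $M$ is $(1,p)$-injective and $G'$ is a $p$-generated submodule of $R$, $\bar\phi$ extends to some $\theta\colon R\to M$. Then $\psi\circ\rho+\theta\circ\pi\colon R^{n+1}\to M$ restricts to $\phi$ on $G$, so $M$ is $(n+1,p)$-injective, completing the induction.

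The step I expect to be the crux is upgrading $H=G\cap R^n$ from \emph{finitely generated} to \emph{$p$-generated}: the finite generation comes from the exact sequence $0\to H\to G\to G'\to 0$ once $G'$ is known to be finitely presented, which is exactly where left $(1,p)$-coherence is used, and the passage to $p$ generators uses the direct-limit hypothesis (a finitely generated module that is the direct limit of its $p$-generated submodules is itself one of them). The remaining diagram chase assembling the extension $\psi\circ\rho+\theta\circ\pi$ is routine, but it genuinely uses that $R^{n+1}$ splits as $R^n\oplus R$ so that the projection $\rho$ is available; one should also note that $(n,p)$-injectivity implies $(1,p)$-injectivity (a $p$-generated left ideal is a $p$-generated submodule of $R^n$), so both extension steps are legitimately available inside the inductive step.
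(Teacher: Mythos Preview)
Your proof of the equivalence $(1)\Leftrightarrow(2)$ is exactly the paper's: pass through Theorem~\ref{T:coh} (condition~(3)) and invoke Theorem~\ref{T:p-flatideal} on each $R^{\Lambda}$. Your parenthetical direct inductive argument is a pleasant alternative that the paper does not give.

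For the ``moreover'' statement, however, you take a genuinely different route. The paper argues by duality: if $M$ is $(1,p)$-injective then, since $R$ is left $(1,p)$-coherent, Theorem~\ref{T:coh}(6) gives that $M^{\sharp}$ is $(1,p)$-flat, hence $(\aleph_0,p)$-flat by Theorem~\ref{T:p-flatideal}; then $(M^{\sharp})^{\sharp}$ is $(\aleph_0,p)$-injective, and $M$ inherits this as a pure submodule of its double character module. Your argument is instead a hands-on induction on $n$: you use $(1,p)$-coherence to make $G'=\pi(G)$ finitely presented, deduce that $H=G\cap R^n$ is finitely generated and hence (by the standing direct-limit hypothesis) $p$-generated, and then splice together two extensions along the split $R^{n+1}=R^n\oplus R$. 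Both arguments are correct and both use coherence in an essential way, but in different places: the paper needs it to invoke the duality $(1)\Leftrightarrow(6)$ of Theorem~\ref{T:coh}, while you need it to force $H$ finitely generated so that the inductive extension step applies. Your approach is more elementary (no character modules, no pure embedding $M\hookrightarrow(M^{\sharp})^{\sharp}$) and makes transparent exactly where the hypothesis on $G\cap R^n$ enters; the paper's approach is shorter and recycles Theorem~\ref{T:p-flatideal} rather than reproving its inductive skeleton in the injective setting.
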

\begin{proof}
It is obvious that $(2)\Rightarrow (1)$. 

$(1)\Rightarrow (2)$. Let $\Lambda$ be a set. By Theorem~\ref{T:coh} $R^{\Lambda}$ is a $(1,p)$-flat right module. From Theorem~\ref{T:p-flatideal} we deduce that $R^{\Lambda}$ is a $(\aleph_0,p)$-flat right module. By using again Theorem~\ref{T:coh} we get $(2)$. 

Let $M$ be a $(1,p)$-injective left module. By Theorem~\ref{T:coh} $M^{\sharp}$ is a $(1,p)$-flat right $R$-module. Then it is also $(\aleph_0,p)$-flat. We deduce that $(M^{\sharp})^{\sharp}$ is a $(\aleph_0,p)$-injective left module. Since $M$ is a pure submodule of $(M^{\sharp})^{\sharp}$, it follows that $M$ is $(\aleph_0,p)$-injective too.
\end{proof}
\begin{proposition}
\label{P:CommParf} Let $R$ be a commutative perfect ring. Then $R$ is Artinian if and only if it is $(1,1)$-coherent.
\end{proposition}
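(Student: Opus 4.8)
The plan is to prove both implications, with the ``only if'' direction being essentially trivial and the ``if'' direction carrying all the content. If $R$ is commutative Artinian, then every finitely generated $R$-module is of finite length, hence finitely presented (indeed Noetherian), so in particular every $m$-generated submodule of every $n$-generated free module is finitely presented; thus $R$ is $(n,m)$-coherent for all $n,m$, and in particular $(1,1)$-coherent. For the converse, assume $R$ is commutative, perfect, and $(1,1)$-coherent. Since $R$ is perfect, it is a finite product of commutative perfect local rings, so I may assume $R$ is local with maximal ideal $P$; being perfect, $P$ is $T$-nilpotent and $R/P$ is a field. It suffices to show $P$ is finitely generated, for then $R$ is Noetherian and local with nilpotent maximal ideal, hence Artinian.

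The key step is to deduce finite generation of $P$ from $(1,1)$-coherence. By definition $(1,1)$-coherence says: every principal (i.e. $1$-generated) ideal of $R$, viewed as a submodule of the free module $R$, is finitely presented. Equivalently, for every $a \in R$ the ideal $Ra$ is finitely presented, which forces the annihilator $\mathrm{ann}(a) = \mathrm{r\text{-}ann}(a)$ to be finitely generated (from the exact sequence $0 \to \mathrm{ann}(a) \to R \to Ra \to 0$ with the middle and right terms finitely presented, the left term is finitely generated). So $(1,1)$-coherence is precisely the statement that $R$ is a \emph{coherent-like} condition on principal ideals: every principal ideal is finitely presented, equivalently every principal annihilator is finitely generated. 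Now I would argue by contradiction: suppose $P$ is not finitely generated. Using $T$-nilpotence of $P$, pick a descending sequence of elements and build a principal ideal whose annihilator is not finitely generated, or directly exhibit an element $a$ with $Ra \cong R/\mathrm{ann}(a)$ not finitely presented. More concretely: if $P$ is not finitely generated, choose $a_1, a_2, \dots \in P$ with $a_{n+1} \notin (a_1,\dots,a_n)$; by $T$-nilpotence the products $a_1 a_2 \cdots a_n$ eventually vanish, and one leverages this to produce an element whose annihilator strictly increases, contradicting that all principal annihilators are finitely generated.

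The main obstacle I expect is pinning down the combinatorial argument in the last step: translating ``$P$ not finitely generated'' plus ``$P$ is $T$-nilpotent'' into an explicit element with non-finitely-generated annihilator. A clean route is: since $R$ is perfect and local, $R$ is not Noetherian iff $P$ is not finitely generated iff (by a standard fact for perfect rings, or by Bass's characterization) $R$ fails the ascending chain condition on principal ideals is false — perfect rings satisfy DCC on principal ideals but may fail ACC — so one genuinely must use the annihilator reformulation rather than a chain condition on $R$ itself. I would set $M = R/P$ and note that if $P$ is infinitely generated then $M = R/P$ is not finitely presented as an $R$-module; but $M$ is cyclic, and $(1,1)$-coherence says cyclic modules of the form $R/\mathrm{ann}(a)$ are finitely presented. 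The gap to bridge is that $R/P$ need not be of the form $R/\mathrm{ann}(a)$. To close it, I would instead pick $a \in P \setminus P^2$ (possible since otherwise $P = P^2$ forces $P=0$ by $T$-nilpotence/Nakayama-type argument) and analyze $\mathrm{ann}(a)$: one shows that if $P$ is infinitely generated then $\mathrm{ann}(a)$ or some iterated annihilator cannot be finitely generated, using that $P$ is nil and $R/P$ is a field so $R$ is ``close to'' a local ring with nilpotent maximal ideal. I would then invoke that a commutative local ring with finitely generated nilpotent maximal ideal is Artinian, completing the proof.
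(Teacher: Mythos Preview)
Your reduction to the local case and your identification of the key reformulation---that $(1,1)$-coherence forces every principal annihilator to be finitely generated---are both correct and match the paper. The trivial direction is fine. The gap is exactly where you yourself flag it: you write ``the gap to bridge is that $R/P$ need not be of the form $R/\mathrm{ann}(a)$,'' and then never actually bridge it. Your proposed workarounds (infinite sequences, products in a $T$-nilpotent ideal, picking $a\in P\setminus P^2$) are vague and do not lead to a proof; in particular there is no reason $\mathrm{ann}(a)$ for $a\in P\setminus P^2$ should be large.

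The missing observation is that in a commutative local perfect ring, $R/P$ \emph{is} of the form $R/\mathrm{ann}(s)$. You even recorded the relevant fact: perfect rings satisfy DCC on principal ideals. Apply this DCC to pick a minimal nonzero principal ideal $Rs$. For any $r\in P$ the principal ideal $Rrs\subseteq Rs$ must be zero (if $Rrs=Rs$ then $s=trs$ for some $t$, forcing $s=0$ since $1-tr$ is a unit). Hence $Ps=0$, so $\mathrm{ann}(s)=P$. Now $(1,1)$-coherence gives $P$ finitely generated, and you finish as you indicated: $P$ is the unique prime, so $R$ is Noetherian (Cohen), and Noetherian plus DCC on finitely generated ideals yields Artinian. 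This is precisely the paper's argument; once you supply the existence of a socle element $s$, your outline becomes a complete proof.
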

\begin{proof}
Suppose that $R$ is $(1,1)$-coherent. Since $R$ is perfect, $R$ is a finite product of local rings. So, we may assume that $R$ is local with maximal $P$. Let $S$ be a minimal non-zero ideal of $R$ generated by $s$. Then $P$ is the annihilator of $s$. So, $P$ is finitely generated and it is the sole prime ideal of $R$. Since all prime ideals of $R$ are finitely generated, $R$ is Noetherian. On the other hand $R$ satisfies the descending chain condition on finitely generated ideals. We conclude that $R$ is Artinian.
\end{proof}

\bigskip
Except in some particular cases, we don't know if each $(1,p)$-injective module is $(\aleph_0,p)$-injective, even if we replace $p$ by $\aleph_0$.
\begin{theorem}
\label{T:parfait2} Let $R$ be a ring which is right perfect, left $(1,1)$-coherent and right self $(\aleph_0,1)$-injective. Then each $(1,1)$-injective left module is $(\aleph_0,\aleph_0)$-injective and $R$ is left coherent.
\end{theorem}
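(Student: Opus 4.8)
The plan is to establish the two assertions in turn: first that every $(1,1)$-injective left $R$-module is $(\aleph_0,\aleph_0)$-injective, and then to deduce left coherence from this. Let $M$ be a $(1,1)$-injective left $R$-module. The key preliminary step is to show that the character module $M^{\sharp}$ is a $(1,1)$-flat \emph{right} $R$-module. For every $a\in R$ the left module $R/Ra$ is finitely presented, so one has the standard natural isomorphism $\mathrm{Tor}_1^R(M^{\sharp},R/Ra)\cong\mathrm{Ext}_R^1(R/Ra,M)^{\sharp}$; computing $\mathrm{Ext}_R^1(R/Ra,M)$ from $0\to Ra\to R\to R/Ra\to 0$ identifies it with the cokernel of the restriction map $\mathrm{Hom}_R(R,M)\to\mathrm{Hom}_R(Ra,M)$, and this cokernel vanishes for every $a$ precisely because $M$ is $(1,1)$-injective. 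Hence $\mathrm{Tor}_1^R(M^{\sharp},R/Ra)=0$ for all $a$, i.e.\ $M^{\sharp}$ is $(1,1)$-flat. (Alternatively one can dualize, via Theorem~\ref{T:pure}(4), the $(1,1)$-pure exact sequence $0\to M\to E(M)\to E(M)/M\to 0$ provided by Proposition~\ref{P:inj}, using that the character module of an injective module is flat.) Since $R$ is right perfect and right self $(\aleph_0,1)$-injective, Theorem~\ref{T:parfait} now gives that $M^{\sharp}$ is projective, hence flat, hence $(\aleph_0,\aleph_0)$-flat; therefore the left module $M^{\sharp\sharp}:=(M^{\sharp})^{\sharp}$ is $(\aleph_0,\aleph_0)$-injective by Proposition~\ref{P:flat}(4) (taking $S=\mathbb{Z}$ and $E=\mathbb{Q}/\mathbb{Z}$).

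Next I would descend from $M^{\sharp\sharp}$ back to $M$. The canonical map realizes $M$ as a pure submodule of $M^{\sharp\sharp}$ (by \cite[Corollary 1.30]{Fac98}, as already used in the proof of Proposition~\ref{P:PureInj}), and $M^{\sharp\sharp}$, being $(\aleph_0,\aleph_0)$-injective, sits as a pure submodule of its injective hull by Proposition~\ref{P:inj}(3) (recall that $(\aleph_0,\aleph_0)$-purity is ordinary purity). Composing these inclusions, $M$ is a pure submodule of an injective left module $I$; choosing an injective hull $E(M)$ of $M$ with $M\subseteq E(M)\subseteq I$, and using that a pure submodule remains pure in any intermediate submodule, we obtain that $M$ is pure in $E(M)$, that is, $M$ is $(\aleph_0,\aleph_0)$-injective, again by Proposition~\ref{P:inj}(3). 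This proves the first assertion; in particular the class of $(1,1)$-injective left $R$-modules coincides with the class of $(\aleph_0,\aleph_0)$-injective left $R$-modules.

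It then remains to see that $R$ is left coherent, i.e.\ left $(\aleph_0,\aleph_0)$-coherent. Since $R$ is left $(1,1)$-coherent, Theorem~\ref{T:coh}(4) shows that every direct limit of a direct system of $(1,1)$-injective left modules is $(1,1)$-injective. By the identification of classes established above, this says exactly that every direct limit of $(\aleph_0,\aleph_0)$-injective left modules is $(\aleph_0,\aleph_0)$-injective, so applying Theorem~\ref{T:coh}(4) a second time, now with $n=m=\aleph_0$, we conclude that $R$ is left $(\aleph_0,\aleph_0)$-coherent, as desired.

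The heart of the proof is the combination of results rather than any single difficult step: Theorem~\ref{T:parfait} collapses $(1,1)$-flat right modules all the way down to projective modules, which is far stronger than mere $(1,1)$-flatness; character-module duality then transports this extra strength to injectivity of left modules; and Theorem~\ref{T:coh} converts it into a coherence statement. The only mildly delicate technical points are the duality between $(1,1)$-injectivity of a left module and $(1,1)$-flatness of its character module---the direction not recorded verbatim in Proposition~\ref{P:flat}, which is why the natural isomorphism between $\mathrm{Tor}_1^R(M^{\sharp},-)$ and $\mathrm{Ext}_R^1(-,M)^{\sharp}$ on finitely presented modules is invoked above---and the elementary fact that purity passes to intermediate submodules; neither is expected to present a real obstacle.
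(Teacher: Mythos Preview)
Your proof is correct and follows essentially the same route as the paper for the first assertion. The only difference is that the implication ``$M$ left $(1,1)$-injective $\Rightarrow M^{\sharp}$ right $(1,1)$-flat'' which you prove by hand (via the $\mathrm{Tor}$--$\mathrm{Ext}$ duality, or via dualizing the $(1,1)$-pure sequence through $E(M)$) is exactly the content of Theorem~\ref{T:coh}, equivalence $(1)\Leftrightarrow(6)$, so the paper simply cites that. After this, both arguments apply Theorem~\ref{T:parfait} and then descend from $M^{\sharp\sharp}$ to $M$ by purity, as in the proof of Proposition~\ref{P:PF=PP}.

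For the coherence statement your argument diverges slightly from the paper's. You use the direct-limit characterization (Theorem~\ref{T:coh}(4)): having shown that the classes of $(1,1)$-injective and $(\aleph_0,\aleph_0)$-injective left modules coincide, closure of the former under direct limits (from left $(1,1)$-coherence) yields closure of the latter, hence left coherence. The paper instead uses the product characterization (Theorem~\ref{T:coh}(3)): for any set $\Lambda$, $R^{\Lambda}$ is $(1,1)$-flat on the right by left $(1,1)$-coherence, hence projective by Theorem~\ref{T:parfait}, hence flat, and this gives left coherence. Both routes are short and valid; the paper's is marginally more direct because it reuses Theorem~\ref{T:parfait} a second time rather than invoking the first assertion.
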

\begin{proof}
Let $M$ be a left $(1,1)$-injective module. By Theorem~\ref{T:coh} $M^{\sharp}$ is $(1,1)$-flat. Whence $M^{\sharp}$ is projective by Theorem~\ref{T:parfait}. We do as in the proof of  Proposition~\ref{P:PF=PP} to conclude that $M$ is $(\aleph_0,\aleph_0)$-injective.

For each set $\Lambda$, $R^{\Lambda}$ is a $(1,1)$-flat right module by Theorem~\ref{T:coh}. It follows that $R^{\Lambda}$ is a projective right module by Theorem~\ref{T:parfait}.
\end{proof}

Recall that a ring is {\it quasi-Frobenius} if it is Artinian and self-injective.
\begin{corollary}
\label{C:Frob} Let $R$ be a quasi-Frobenius ring. Then, for each right (or left) $R$-module $M$, the following conditions are equivalent:
\begin{enumerate}
\item $M$ is $(1,1)$-flat;
\item $M$ is projective;
\item $M$ is injective;
\item $M$ is $(1,1)$-injective.
\end{enumerate}
\end{corollary}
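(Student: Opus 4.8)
The plan is to chase the implications $(1)\Rightarrow(2)\Rightarrow(3)\Rightarrow(4)\Rightarrow(1)$ around the circle, using Theorem~\ref{T:parfait2} (or Theorem~\ref{T:parfait}) for the hardest arrow and classical facts about quasi-Frobenius rings for the rest. First I would record that a quasi-Frobenius ring $R$ is two-sided Artinian and two-sided self-injective; in particular it is right perfect, it is right self $(\aleph_0,1)$-injective (since it is even right self-injective), and it is left $(1,1)$-coherent (indeed left Noetherian, so left coherent). Thus the hypotheses of Theorem~\ref{T:parfait2} and Theorem~\ref{T:parfait} are met, and by left–right symmetry the same holds with the roles of left and right exchanged. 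I will carry out the argument for right modules; the left-module case is identical after swapping sides.

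For $(1)\Rightarrow(2)$: if $M$ is a $(1,1)$-flat right $R$-module, then since $R$ is right perfect and right self $(\aleph_0,1)$-injective, Theorem~\ref{T:parfait} gives that $M$ is projective. For $(2)\Rightarrow(3)$: over a quasi-Frobenius ring the projective modules and the injective modules coincide, so every projective module is injective; this is the standard characterization of QF rings and I would simply cite it (for instance it follows from $R$ being self-injective and Artinian, hence every free module is injective, and direct summands of injectives over a Noetherian ring are injective). For $(3)\Rightarrow(4)$: an injective module is a direct summand of its injective hull, equivalently it equals its injective hull, hence it is trivially a $(1,1)$-pure — in fact pure — submodule of its injective hull, so by Proposition~\ref{P:inj} it is $(1,1)$-injective. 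Finally $(4)\Rightarrow(1)$: if $M$ is $(1,1)$-injective as a right module, apply the left-module version of Theorem~\ref{T:parfait2} with sides reversed: $M^{\sharp}$ is a $(1,1)$-injective left module, hence $(M^{\sharp})$ has the property that... more directly, by Theorem~\ref{T:coh}(6) applied to the $(1,1)$-coherent ring $R$ (on the appropriate side), $M^{\sharp}$ is $(1,1)$-flat, and then Theorem~\ref{T:parfait2} forces $M^{\sharp}$ to be projective, hence flat; since $M$ embeds purely in $(M^{\sharp})^{\sharp}$ and $(M^{\sharp})^{\sharp}$ is $(1,1)$-flat (being a character module of a $(1,1)$-injective module by Proposition~\ref{P:flat}(4) combined with Proposition~\ref{P:inj}), $M$ is $(1,1)$-flat, closing the loop.

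The step I expect to need the most care is $(4)\Rightarrow(1)$, because it requires threading $(1,1)$-injectivity of $M$ through the character-module duality correctly and invoking Theorem~\ref{T:parfait2} on the side where $R$ is $(1,1)$-coherent; I would double-check that the QF hypothesis indeed supplies left $(1,1)$-coherence (it does, since $R$ is left Noetherian) and that the relevant instance of Theorem~\ref{T:parfait2} applies. The remaining implications are essentially bookkeeping with well-known QF facts, so the bulk of the writing is in setting up the symmetry and citing the earlier results precisely.
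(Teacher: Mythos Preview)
Your implications $(1)\Rightarrow(2)$, $(2)\Rightarrow(3)$, and $(3)\Rightarrow(4)$ match the paper's proof exactly: Theorem~\ref{T:parfait} for the first, the classical quasi-Frobenius fact for the second, and triviality for the third. The divergence is in how you close the loop. The paper proves $(4)\Rightarrow(3)$ in one stroke by invoking Theorem~\ref{T:parfait2} on the appropriate side: a $(1,1)$-injective module is then $(\aleph_0,\aleph_0)$-injective, hence injective since $R$ is Noetherian. You instead attempt $(4)\Rightarrow(1)$ through character modules, and this is where the proposal wobbles.

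Your argument for $(4)\Rightarrow(1)$ has several slips. After obtaining that $M^\sharp$ is $(1,1)$-flat via Theorem~\ref{T:coh}(6), you cite Theorem~\ref{T:parfait2} to conclude it is projective, but the relevant result is Theorem~\ref{T:parfait} (the one about $(1,1)$-flat modules, not $(1,1)$-injective ones). Next, to obtain that $(M^\sharp)^\sharp$ is $(1,1)$-flat you appeal to Proposition~\ref{P:flat}(4), but that equivalence runs the wrong way: it says $X$ is $(n,m)$-flat iff $X^\sharp$ is $(n,m)$-injective, not that the character of a $(1,1)$-injective module is $(1,1)$-flat. The missing chain is: $M^\sharp$ projective $\Rightarrow$ $M^\sharp$ injective (using the QF equivalence you already quoted) $\Rightarrow$ $M^\sharp$ is $(1,1)$-injective, and then Theorem~\ref{T:coh}(6) --- not Proposition~\ref{P:flat}(4) --- yields $(M^\sharp)^\sharp$ $(1,1)$-flat. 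Finally you need, and silently use, the fact that a pure submodule of a $(1,1)$-flat module is $(1,1)$-flat; this is true but not stated anywhere in the paper and deserves a one-line diagram chase. All of this can be repaired, but it is a detour: since you have already verified the hypotheses of Theorem~\ref{T:parfait2} in your opening paragraph, the paper's direct $(4)\Rightarrow(3)$ is both shorter and avoids every one of these pitfalls.
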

\begin{proof}
It is well known that $(2)\Leftrightarrow (3)$. By Theorem~\ref{T:parfait} $(1)\Leftrightarrow (2)$ because $R$ satisfies the conditions of this theorem, and it is obvious that $(3)\Rightarrow (4)$ and the converse holds by Theorem~\ref{T:parfait2}.
\end{proof}

We prove the following theorem as \cite[Th\'eor\`eme 1.4]{Cou82}.

\begin{theorem}\label{T:locCoh}
\label{T:pure-projective} Let $R$ be a commutative ring and $n, m$ two fixed positive integers. The following conditions are equivalent:
\begin{enumerate}
\item $R$ is $(n,m)$-coherent;
\item for each multiplicative subset $S$ of $R$, $S^{-1}R$ is $(n,m)$-coherent, and for each $(n,m)$-injective $R$-module $M$, $S^{-1}M$ is $(n,m)$-injective over $S^{-1}R$;
\item For each maximal ideal $P$, $R_P$ is $(n,m)$-coherent and for each $(n,m)$-injective $R$-module $M$, $M_P$ is $(n,m)$-injective over $R_P$.
\end{enumerate} 
\end{theorem}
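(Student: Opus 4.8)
The plan is to prove the implications $(1)\Rightarrow(2)\Rightarrow(3)\Rightarrow(1)$, following the pattern of \cite[Th\'eor\`eme 1.4]{Cou82}. The implication $(2)\Rightarrow(3)$ is immediate: take $S=R\setminus P$ for each maximal ideal $P$. For $(1)\Rightarrow(2)$, first observe that any $m$-generated submodule of $(S^{-1}R)^n$ is of the form $S^{-1}N$ for some $m$-generated submodule $N$ of $R^n$ (clear the denominators of a set of $m$ generators); since $R$ is $(n,m)$-coherent, $N$ is finitely presented, and exactness of localization makes $S^{-1}N$ finitely presented over $S^{-1}R$, so $S^{-1}R$ is $(n,m)$-coherent. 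Next, given an $(n,m)$-injective left $R$-module $M$, write $S^{-1}M$ as the direct limit of the directed system consisting of copies of $M$ indexed by $S$ ordered by divisibility, with transition maps given by multiplication; by Theorem~\ref{T:coh} (condition $(4)$ applied to $R$) this direct limit is $(n,m)$-injective over $R$. It then remains to check the transfer fact that an $S^{-1}R$-module is $(n,m)$-injective over $S^{-1}R$ if and only if it is $(n,m)$-injective over $R$: an $R$-linear map from an $m$-generated submodule $K$ of $R^n$ into an $S^{-1}R$-module factors uniquely through $K\to S^{-1}K$ (an $m$-generated submodule of $(S^{-1}R)^n$), while $\mathrm{Hom}_R(R^n,-)=\mathrm{Hom}_{S^{-1}R}((S^{-1}R)^n,-)$ on such modules, so the two extension problems coincide.

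For $(3)\Rightarrow(1)$ I would again use Theorem~\ref{T:coh}: it suffices to show that every direct limit $M=\varinjlim_i M_i$ of $(n,m)$-injective left $R$-modules is $(n,m)$-injective. Fix a maximal ideal $P$. Localizing the system gives $M_P=\varinjlim_i (M_i)_P$; by hypothesis $(3)$ each $(M_i)_P$ is $(n,m)$-injective over $R_P$, and since $R_P$ is $(n,m)$-coherent, Theorem~\ref{T:coh} (condition $(4)$ over $R_P$) shows $M_P$ is $(n,m)$-injective over $R_P$, hence over $R$ by the transfer fact above. Since an arbitrary product of $(n,m)$-injective modules is again $(n,m)$-injective ($\mathrm{Hom}$ commutes with products and a product of epimorphisms is an epimorphism), $\prod_{P\in\mathrm{Max}\,R}M_P$ is $(n,m)$-injective over $R$. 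By \cite[Lemme 1.3]{Cou82} the canonical map $M\to\prod_{P\in\mathrm{Max}\,R}M_P$ is a pure monomorphism, in particular an $(m,n)$-pure one; thus $M$ is an $(m,n)$-pure submodule of an $(n,m)$-injective module. Composing the $(m,n)$-pure embeddings $M\subseteq\prod_P M_P\subseteq E(\prod_P M_P)$ (the second being pure by Proposition~\ref{P:inj}) and restricting to the injective-hull summand $E(M)$ of $E(\prod_P M_P)$ shows $M$ is an $(m,n)$-pure submodule of $E(M)$, so $M$ is $(n,m)$-injective by Proposition~\ref{P:inj}. Hence $R$ is $(n,m)$-coherent.

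The denominator clearing in $(1)\Rightarrow(2)$ and the bookkeeping of the extension problem across $K\to S^{-1}K$ are routine. The step that needs care is the end of $(3)\Rightarrow(1)$, namely that $(n,m)$-injectivity really descends from $\prod_P M_P$ to $M$; the three ingredients are the stability of $(n,m)$-injectivity under products, the purity of $M\hookrightarrow\prod_P M_P$ from \cite[Lemme 1.3]{Cou82} (already invoked for Proposition~\ref{P:PureLocal}), and the implication ``an $(m,n)$-pure submodule of an $(n,m)$-injective module is $(n,m)$-injective'', which is extracted from Proposition~\ref{P:inj} as above. One should also be attentive to the transfer fact that a module which is $(n,m)$-injective over $R_P$ is $(n,m)$-injective over $R$, used twice; it is precisely here that one exploits the fact that the test submodules $K\subseteq R^n$ remain $m$-generated after localization.
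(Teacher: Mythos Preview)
Your proof is correct and follows exactly the approach the paper indicates (the paper gives no argument of its own, only the reference to \cite[Th\'eor\`eme 1.4]{Cou82}, whose pattern you have reproduced). One small simplification at the end of $(3)\Rightarrow(1)$: you do not need to pass through $E(\prod_P M_P)$ and the summand $E(M)$; the fact that an $(m,n)$-pure submodule of an $(n,m)$-injective module is itself $(n,m)$-injective follows directly from condition~(3) of Theorem~\ref{T:pure} (a system of $m$ equations in $n$ unknowns over $M$ solvable in $\prod_P M_P$ is already solvable in $M$), so $M\subseteq\prod_P M_P$ alone suffices.
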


Recall that a ring $R$ is a {\it right IF-ring} if each right injective $R$-module is flat.
\begin{theorem}
\label{T:LocPerIF} Let $R$ be a commutative ring which is locally perfect, $(1,1)$-coherent and self $(1,1)$-injective. Then:
\begin{enumerate}
\item $R$ is coherent, self $(\aleph_0,\aleph_0)$-injective and  locally quasi-Frobenius;
\item each $(1,1)$-flat module is flat;
\item each $(1,1)$-injective module is $(\aleph_0,\aleph_0)$-injective.
\end{enumerate}
\end{theorem}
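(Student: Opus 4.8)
The plan is to localize, prove that each localization is quasi-Frobenius, and then assemble the global conclusions from the results already available.

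First I would fix a maximal ideal $P$. Since $R$ is locally perfect, $R_P$ is perfect; since $R$ is $(1,1)$-coherent, Theorem~\ref{T:locCoh} (condition (2), with $S=R\setminus P$) shows that $R_P$ is $(1,1)$-coherent, and, applied to the $(1,1)$-injective $R$-module $R$, that $(R)_P=R_P$ is $(1,1)$-injective over $R_P$, i.e. $R_P$ is self $(1,1)$-injective. By Proposition~\ref{P:CommParf}, $R_P$ is then Artinian; write $\mathfrak{m}$ for its maximal ideal and $k=R_P/\mathfrak{m}$. I claim the socle $\mathrm{Soc}(R_P)=\mathrm{ann}_{R_P}(\mathfrak{m})$ is simple: if it contained two $k$-linearly independent elements $a,b$, then $a\neq 0\neq b$ forces $\mathrm{ann}(a)=\mathfrak{m}=\mathrm{ann}(b)$, so self $(1,1)$-injectivity applied to the homomorphism $R_Pa\to R_P$ sending $a\mapsto b$ (well defined since $\mathrm{ann}(a)\subseteq\mathrm{ann}(b)$) would yield $b\in R_Pa=ka$, a contradiction. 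Hence $R_P$ is a commutative Artinian local ring with simple socle, so it is quasi-Frobenius, in particular self-injective. This already gives the "locally quasi-Frobenius" part of (1), and shows that $R$ is locally self-injective, a fortiori locally self $(\aleph_0,1)$-injective.

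Next I would deduce (2) immediately: $R$ is locally perfect and locally self $(\aleph_0,1)$-injective, so Corollary~\ref{C:LocPer} applies and every $(1,1)$-flat $R$-module is flat. For (3), let $M$ be a $(1,1)$-injective $R$-module; since $R$ is $(1,1)$-coherent, Theorem~\ref{T:coh} (with $S=\mathbb{Z}$, $E=\mathbb{Q}/\mathbb{Z}$) gives that $M^{\sharp}$ is a $(1,1)$-flat right $R$-module, hence flat by (2); then Proposition~\ref{P:flat}(4) shows $M^{\sharp\sharp}$ is $(\aleph_0,\aleph_0)$-injective, and since $M$ is a pure submodule of $M^{\sharp\sharp}$ one concludes exactly as in the final part of the proof of Proposition~\ref{P:PF=PP} that $M$ is $(\aleph_0,\aleph_0)$-injective. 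Applying this to $M=R$, which is $(1,1)$-injective by hypothesis, shows $R$ is self $(\aleph_0,\aleph_0)$-injective. Finally, for coherence: by Theorem~\ref{T:coh} every power $R^{\Lambda}$ is a $(1,1)$-flat right $R$-module (since $R$ is $(1,1)$-coherent), hence flat by (2), so Theorem~\ref{T:coh} in its $(\aleph_0,\aleph_0)$-version shows $R$ is $(\aleph_0,\aleph_0)$-coherent, i.e. coherent. This completes (1).

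I expect the crux to be the local step: once Proposition~\ref{P:CommParf} forces each $R_P$ to be Artinian, the passage from "self $(1,1)$-injective" to "quasi-Frobenius" — that is, to a simple socle — is where the real work lies, and everything else is bookkeeping with the earlier theorems. A secondary point requiring care is that $(1,1)$-coherence and self $(1,1)$-injectivity must be seen to descend to the localizations $R_P$; this is precisely the content of Theorem~\ref{T:locCoh}, the delicate part being that the localization of a $(1,1)$-injective module is not automatically $(1,1)$-injective without a coherence hypothesis.
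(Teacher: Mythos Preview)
Your proof is correct. The local step---showing each $R_P$ is quasi-Frobenius---is essentially identical to the paper's, except that you invoke Proposition~\ref{P:CommParf} where the paper reproves it inline; the simple-socle argument via $(1,1)$-injectivity is the same.

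Where you diverge from the paper is in deducing (3) and coherence. The paper stays on the localization side throughout: for (3) it uses Theorem~\ref{T:locCoh} to descend $(1,1)$-injectivity of $M$ to each $M_P$, then Corollary~\ref{C:Frob} to upgrade to full injectivity of $M_P$, and finally the local characterization of purity (Proposition~\ref{P:PureLocal}) to conclude $M$ is $(\aleph_0,\aleph_0)$-injective; coherence is then obtained by verifying condition~(3) of Theorem~\ref{T:locCoh} in the $(\aleph_0,\aleph_0)$ case. You instead pivot to the character-module side, exactly as in the proof of Theorem~\ref{T:parfait2}: once (2) is in hand, $M^\sharp$ is flat, so $M^{\sharp\sharp}$ is FP-injective, and $M$ inherits this as a pure submodule; coherence follows from $R^\Lambda$ being $(1,1)$-flat hence flat. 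Your route has the advantage of reusing (2) and avoiding a second pass through localization, while the paper's route keeps all the work at the stalks and exploits Corollary~\ref{C:Frob} directly. Both are clean; yours is arguably more economical given what has already been proved.
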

\begin{proof}
By Theorem~\ref{T:locCoh} $R_P$ is $(1,1)$-coherent and $(1,1)$-injective for each maximal ideal $P$. Let $a$ be a generator of a minimal non-zero ideal of $R_P$. Then $PR_P$ is the annihilator of $a$ and consequently $PR_P$ is finitely generated over $R_P$. Since all prime ideals of $R_P$ are finitely generated, we deduce that $R_P$ is Artinian for each maximal ideal $P$. Moreover, the $(1,1)$-injectivity of $R_P$ implies that the socle of $R_P$ (the sum of all minimal non-zero ideals) is simple. It follows that $R_P$ is quasi-Frobenius for each maximal ideal $P$.

Let $M$ be a $(\aleph_0,\aleph_0)$-injective $R$-module. By Theorem~\ref{T:locCoh} $M_P$ is  $(1,1)$-injective for each maximal ideal $P$. By Corollary~\ref{C:Frob} $M_P$ is injective for each maximal ideal $P$. We conclude that $R$ is self $(\aleph_0,\aleph_0)$-injective and it is coherent by Theorem~\ref{T:locCoh}.

If $M$ is $(1,1)$-injective, we prove as above that $M_P$ is injective for each maximal ideal $P$. It follows that $M$ is $(\aleph_0,\aleph_0)$-injective.

The second assertion is an immediate consequence of Corollary~\ref{C:LocPer}.
\end{proof}

The following proposition is easy to prove:
\begin{proposition}
\label{P:1coh} A ring $R$ is left $(\aleph_0,1)$-coherent if and only if each finitely generated right ideal has a  finitely generated left annihilator.
\end{proposition}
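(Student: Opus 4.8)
The plan is to prove both directions of the biconditional directly from the definition of left $(\aleph_0,1)$-coherence, which asks that every cyclic (i.e. $1$-generated) submodule of every finitely generated free left $R$-module be finitely presented. The key observation is that a cyclic left submodule $Rc$ of $R^n$ generated by $c=(c_1,\dots,c_n)$ is isomorphic, as a left module, to $R/\ell(c)$, where $\ell(c)=\{r\in R\mid rc=0\}=\{r\in R\mid rc_i=0\text{ for all }i\}$ is the left annihilator of the tuple $c$. Thus $Rc$ is finitely presented if and only if $\ell(c)$ is a finitely generated left ideal.

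First I would set up the correspondence with finitely generated right ideals. Given a finitely generated right ideal $I=c_1R+\cdots+c_nR$, form the tuple $c=(c_1,\dots,c_n)\in R^n$ and note that $r\in R$ kills $c$ on the left if and only if $rc_i=0$ for each $i$, i.e. $r$ lies in the left annihilator $\mathrm{l\text{-}ann}(I)$ of the right ideal $I$; so $\ell(c)=\mathrm{l\text{-}ann}(I)$. Hence if $R$ is left $(\aleph_0,1)$-coherent, $Rc\cong R/\mathrm{l\text{-}ann}(I)$ is finitely presented, forcing $\mathrm{l\text{-}ann}(I)$ to be a finitely generated left ideal; this gives the forward implication.

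For the converse, suppose every finitely generated right ideal has a finitely generated left annihilator, and let $N=Rc$ be an arbitrary cyclic submodule of $R^n$ with $c=(c_1,\dots,c_n)$. Then $N\cong R/\ell(c)$ and, by the computation above, $\ell(c)=\mathrm{l\text{-}ann}(c_1R+\cdots+c_nR)$ is the left annihilator of a finitely generated right ideal, hence finitely generated by hypothesis. Therefore $N$ is finitely presented, so $R$ is left $(\aleph_0,1)$-coherent. I do not anticipate a serious obstacle here: the only point requiring a little care is the bookkeeping that a cyclic submodule of $R^n$ presented by a tuple $c$ has relation module exactly the left annihilator of the entries of $c$, and that finite generation of that annihilator is equivalent to finite presentation of the cyclic module (a cyclic module $R/A$ is finitely presented precisely when $A$ is finitely generated). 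Once that dictionary is in place the proof is a direct translation in both directions.
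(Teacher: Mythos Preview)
Your argument is correct and is exactly the natural direct verification: identifying a cyclic left submodule $Rc\subseteq R^n$ with $R/\mathrm{l\text{-}ann}(c_1R+\cdots+c_nR)$ reduces the question to whether this left annihilator is finitely generated. The paper itself gives no proof (it only says the proposition is ``easy to prove''), so your approach is precisely the intended one.
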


\begin{example}
Let $V$ be a non-Noetherian (commutative) valuation domain whose  order group is not the additive group of real numbers and let $R=V[[X]]$ be the power series ring in one indeterminate over $V$. Since $R$ is a domain, $R$ is $(\aleph_0,1)$-coherent. But, in \cite{AnWa87} it is proven that there exist two elements $f$ and $g$ of $R$ such that $Rf\cap Rg$ is not finitely generated. By using the exact sequence $0\rightarrow Rf\cap Rg\rightarrow Rf\oplus Rg\rightarrow Rf+Rg\rightarrow 0$ we get that $Rf+Rg$ is not finitely presented. So, $R$ is not $(1,2)$-coherent.
\end{example}

\end{document}